\theoremstyle{plain}
\title[Kh, SFH, and Naturality]{On the naturality of the spectral sequence from Khovanov homology to Heegaard Floer homology}
\author{J. Elisenda Grigsby}
\thanks{JEG was partially supported by an NSF postdoctoral fellowship and NSF grant number DMS-0905848.}
\address{Columbia Math Dept.;2990 Broadway MC4406; NY, NY 10027}
\email{egrigsby@math.columbia.edu}
\author{Stephan Wehrli}
\thanks{SW was supported by a postdoctoral fellowship of the Fondation Sciences Math\'ematiques de Paris.}
\address{Institut de Math\'ematiques de Jussieu; Universit\'e Paris 7; 175 rue du Chevaleret; bureau 7B3; 75013 Paris, France}
\email{wehrli@math.jussieu.fr}
\theoremstyle{plain}
\newtheorem{theorem}{Theorem}[section]
\newtheorem{lemma}[theorem]{Lemma}
\newtheorem{proposition}[theorem]{Proposition}
\newtheorem{corollary}[theorem]{Corollary}
\newtheorem*{theoremA}{Theorem~\ref{thm:SurfDecomp}}
\newtheorem*{theoremAdjoinTriv}{Theorem~\ref{thm:AdjoinTrivial}}
\newtheorem*{theoremCut}{Theorem~\ref{thm:Cutting}}
\newtheorem*{theoremStack}{Theorem~\ref{thm:Stack}}
\theoremstyle{definition}
\newtheorem{notation}[theorem]{Notation}
\newtheorem{definition}[theorem]{Definition}
\newtheorem{remark}[theorem]{Remark}
\newcommand{\Ozsvath}{{Ozsv{\'a}th} }
\newcommand{\Juhasz}{{Juh{\'a}sz} }
\newcommand{\N}{\ensuremath{\mathbb{N}}}
\newcommand{\R}{\ensuremath{\mathbb{R}}}
\newcommand{\Z}{\ensuremath{\mathbb{Z}}}
\newcommand{\C}{\ensuremath{\mathbb{C}}}
\newcommand{\bL}{\ensuremath{\mathbb{L}}}
\newcommand{\PP}{\ensuremath{\mathbb{P}}}
\newcommand{\boldalpha}{\ensuremath{\mbox{\boldmath $\alpha$}}}
\newcommand{\boldbeta}{\ensuremath{\mbox{\boldmath $\beta$}}}
\newcommand{\boldeta}{\ensuremath{\mbox{\boldmath $\eta$}}}
\newcommand{\boldtheta}{\ensuremath{\mbox{\boldmath $\theta$}}}
\newcommand{\boldSigma}{\ensuremath{\mbox{\boldmath $\Sigma$}}}
\newcommand{\cP}{\ensuremath{\mathcal{P}}}
\newcommand{\cI}{\ensuremath{\mathcal{I}}}
\newcommand{\cF}{\ensuremath{\mathcal{F}}}
\newcommand{\cA}{\ensuremath{\mathcal{A}}}
\newcommand{\cM}{\ensuremath{\mathcal{M}}}
\newcommand{\Ztwo}{\ensuremath{\mathbb{Z}_2}}
\newcommand{\Torus}{\ensuremath{\mathbb{T}}}
\newcommand{\YI}{\ensuremath{Y(\cI)}}
\newcommand{\SI}{\ensuremath{S_{\cI}}}
\newcommand{\cR}{\ensuremath{\mathcal{R}}}
\newcommand{\spincs}{\ensuremath{\mathfrak{s}}}
\newcommand{\Weta}{\ensuremath{W_{\eta_0, \ldots, \eta_n}}}
\newcommand{\Wetak}{\ensuremath{W_{\eta_0, \eta_{i_1},\ldots, \eta_{i_k}}}}
\newcommand{\hatSigma}{\ensuremath{\widehat{\Sigma}}}
\newcommand{\hatP}{\ensuremath{\widehat{P}}}
\newcommand{\piSigma}{\ensuremath{\pi_\Sigma}}
\newcommand{\piP}{\ensuremath{\pi_{\mathbb{P}}}}
\newcommand{\circlesZ}{\ensuremath{(Z_1 \amalg \ldots \amalg Z_m)}}
\newcommand{\outerspinc}{\ensuremath{O_{\amalg_{\cI}}}}
\begin{document}
\bibliographystyle{plain}

\begin{abstract} In \cite{MR2141852}, Ozsv{\'a}th-Szab{\'o} established an algebraic relationship, in the form of a spectral sequence, between the reduced Khovanov homology of (the mirror of) a link $\bL \subset S^3$ and the Heegaard Floer homology of its double-branched cover.  This relationship, extended in \cite{GT07060741} and \cite{GT08071432}, was recast, in \cite{AnnularLinks}, as a specific instance of a broader connection between Khovanov-- and Heegaard Floer--type homology theories, using a version of Heegaard Floer homology for sutured manifolds developed by \Juhasz in \cite{MR2253454}.  In the present work we prove the naturality of the spectral sequence under certain elementary TQFT operations, using a generalization of Juh{\'a}sz's surface decomposition theorem valid for decomposing surfaces geometrically disjoint from an imbedded framed link. 
\end{abstract}
\maketitle

\section{Introduction}
Let $\bL \subset S^3$ be a link.  There is an algebraic connection, discovered by \Ozsvath and Szab{\'o}, between the Khovanov homology \cite{MR1740682} of $\bL$ and the Heegaard Floer homology \cite{MR2113019} of the double-branched cover of $\bL$.  Specifically, in  \cite{MR2141852}, Ozsv{\'a}th-Szab{\'o} construct a spectral sequence whose $E^2$ term is $\widetilde{Kh}(\overline{\bL})$ and whose $E^\infty$ term is $\widehat{HF}(\boldSigma(S^3, \bL))$.   Here (and throughout), $\widetilde{Kh}$ denotes Khovanov's reduced homology \cite{MR2034399}, $\overline{\bL}$ denotes the mirror of $\bL$, $\boldSigma(A, B)$ denotes the double-branched cover of $A$ branched over $B$, and $\widehat{HF}$ denotes the ($\wedge$ version of the) Heegaard Floer homology \cite{MR2113019}.  Unless explicitly stated otherwise, all Khovanov and Heegaard Floer homology theories discussed in this paper will be considered with coefficients in $\Z_2$.  

Later work, of Roberts in \cite{GT07060741} and the authors in \cite{GT08071432}, placed Ozsv{\'a}th-Szab{\'o}'s work in a more general context, leading to:
\begin{itemize}
  \item a proof, in \cite{GT08071432}, that Khovanov's categorification, \cite{MR2124557}, of the reduced, $n$--colored Jones polynomial detects the unknot whenever $n \geq 2$, as well as
  \item a new method, due to Baldwin-Plamenevskaya \cite{GT08082336}, for establishing the tightness of certain contact structures.
\end{itemize}

In \cite{AnnularLinks}, we recast \cite{MR2141852}, \cite{GT07060741}, and \cite{GT08071432} as specific instances of a broader relationship between Khovanov- and Heegaard Floer-type homology theories, using a version of Heegaard Floer homology for sutured manifolds developed by \Juhasz in \cite{MR2253454}.  
The aim of the present work is to prove that the connection between Khovanov and Heegaard-Floer homology 
behaves well under certain natural geometric operations.

In particular, let $D$ represent an oriented disk, $A$ an oriented annulus, and $I = [0,1]$ the oriented closed unit interval.  In \cite{GT08071432} we prove the existence of a spectral sequence from the Khovanov homology of any {\em admissible balanced tangle}, $T \subset D \times I$,  to the sutured Floer homology of $\boldSigma(D \times I, T)$.  Here, $D \times I$ is viewed as a product sutured manifold in the sense of Gabai \cite{MR723813} (Definition \ref{defn:suturedman}), sutured Floer homology \cite{MR2253454} is an invariant of balanced sutured manifolds (see Definitions \ref{defn:suturedman} and \ref{defn:balancedSM}), and an {\em admissible $n$--balanced tangle} (Definition \ref{defn:balancedtangle}) is a properly-imbedded $1$--manifold satisfying:
\begin{enumerate}
  \item $T \cap (\partial D \times I) = \emptyset$, and
  \item $|T \cap (D \times \{1\})| = |T \cap (D \times \{0\})|= n \in \Z_{\geq 0}$,
\end{enumerate}
where two admissible $n$--balanced tangles are considered equivalent if they are ambiently isotopic through admissible $n$--balanced tangles.  In \cite{AnnularLinks}, we prove the existence of a similar spectral sequence from the Khovanov homology of a link ({\em admissible $0$--balanced tangle}), $\bL$, in the product sutured manifold $A \times I$ to the sutured Floer homology of $\boldSigma(A \times I, \bL)$.  

These spectral sequences are constructed, following \cite{MR2141852}, by associating to an enhanced projection (diagram) of $T \subset D \times I$ (resp., $\bL \subset A \times I$) a framed link, $L_\bL \subset \boldSigma(D \times I, T)$ (resp., $L_\bL \subset \boldSigma(A \times I, \bL)$).  By counting holomorphic polygons in a particular choice of Heegaard multi-diagram compatible with $L_T$ (resp., $L_\bL$), one obtains a filtered complex, $X(L_T)$ (resp., $X(L_\bL)$), with an 
associated {\em link surgeries spectral sequence} 
whose $E^2$ term is an appropriate version of Khovanov homology for $T$ (resp., for $\bL$) and whose $E^\infty$ term is the sutured Floer homology of $\boldSigma(D \times I, T)$ (resp., of $\boldSigma(A \times I, \bL)$).  

Roberts, in \cite[Sec. 7]{GT08082817}, proves that the filtered quasi-isomorphism type (Definition \ref{defn:quasiiso}) of $X(L_T)$ (resp., $X(L_\bL)$) is invariant of the choice of multi-diagram, and Baldwin, in \cite{GT08093293}, proves that the filtered quasi-isomorphism type of $X(L_T)$ is independent of the projection of $T$ (resp., $\bL$), yielding, for any $n$--balanced tangle $T \subset D \times I$ (resp., any link $\bL \subset A \times I$) a sequence of invariants, one for every page of the link surgeries spectral sequence for $L_T$ (resp., $L_\bL$).\footnote{Baldwin and Roberts state their theorems only for the case where $T \subset D \times I$ is a $1$--balanced tangle, but their arguments are all local, hence work equally well in our more general setting.  See Remark \ref{rmk:SpecSeqInv}.}  

In the present work, we show that these invariants behave ``as expected'' with respect to the following standard TQFT-type operations:
\begin{enumerate}
  \item trivial inclusion (see Figure \ref{fig:AdjoinTrivial}),
  \item horizontal stacking (see Figure \ref{fig:Stacking}), and
  \item vertical cutting (see Figure \ref{fig:TQFT}).
\end{enumerate}

\begin{figure}
\begin{center}
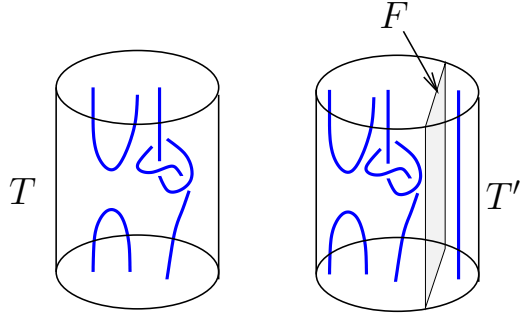
\end{center}
\caption{Adjoining a trivial strand, separated from an $n$--balanced tangle, $T \subset D \times I$, by a vertical disk, $F$, to form an $n+1$--balanced tangle, $T' \subset D \times I$.}
\label{fig:AdjoinTrivial}
\end{figure}

\begin{figure}
\begin{center}
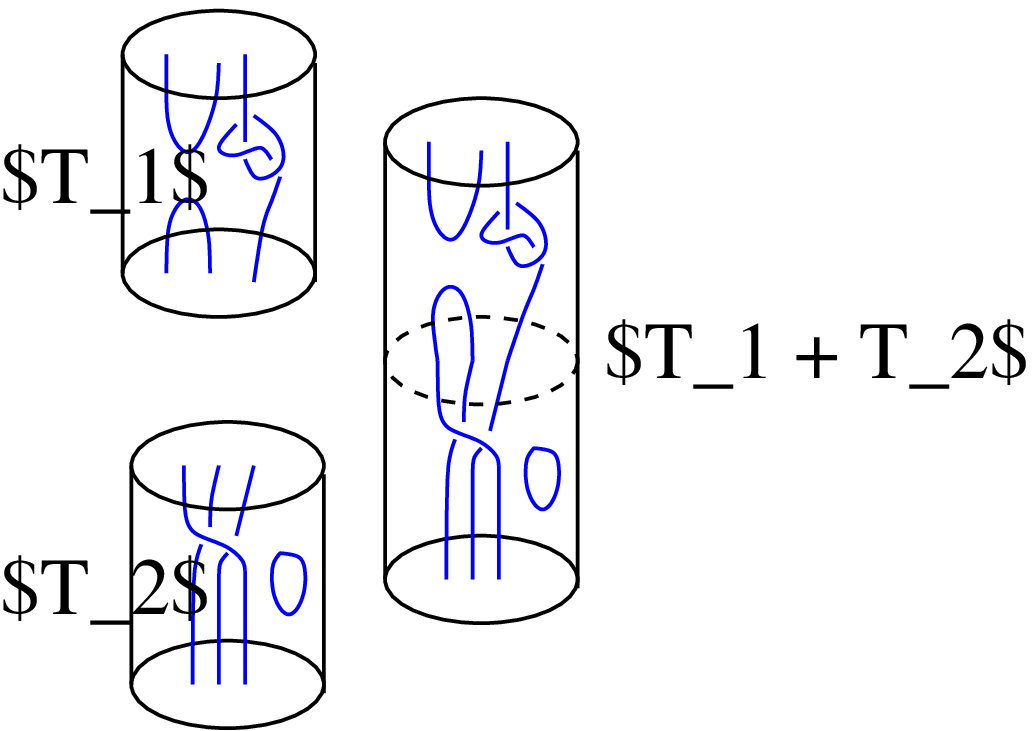
\end{center}
\caption{Stacking two (projections of) balanced tangles, $T_1, T_2 \subset D \times I$, to obtain a new balanced tangle, $T_1 + T_2 \subset D \times I$.}   
\label{fig:Stacking}
\end{figure}

\begin{figure}
\begin{center}
\input{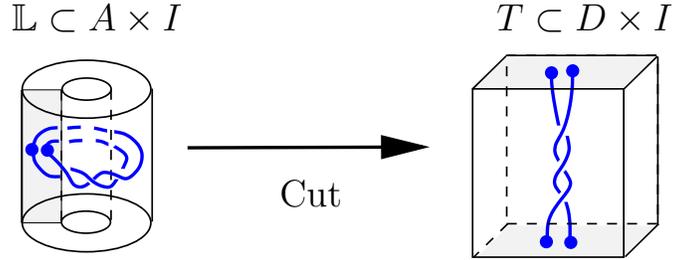}
\end{center}
\caption{Cutting a link, $\bL \subset A \times I$, to obtain a balanced tangle, $T \subset D \times I$.}
\label{fig:TQFT}
\end{figure}

In particular, let $\cF(T):=X(L_T)$ (resp., $\cF(\bL):=X(L_\bL)$) denote the filtered chain complex, described above and in Notation \ref{defn:FiltCpxTangle}, associated to the balanced tangle $T \subset D \times I$ (resp., link $\bL \subset A \times I$).  We prove:

\begin{theoremAdjoinTriv}(Trivial inclusion) Let $T \subset D \times I$ be a balanced tangle in the product sutured manifold $D \times I$, and let $T' \subset D \times I$ be the tangle obtained from $T$ by adjoining a trivial strand separated from $T$ by a properly-imbedded $I$--invariant disk, $F$, as in Figure \ref{fig:AdjoinTrivial}.  
Then \[\cF(T) = \cF(T').\]
\end{theoremAdjoinTriv}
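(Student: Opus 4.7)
The plan is to apply the generalized surface decomposition theorem, Theorem~\ref{thm:SurfDecomp}, to the lift of the separating disk $F$ in the double branched cover, exploiting the fact that $F$ is disjoint from the imbedded framed link used to build $\cF(T')$.

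First, I will choose the projection of $T'$ to extend a chosen projection of $T$ by drawing the new trivial strand in a small neighborhood of $F$ so that it contributes no crossings. With this choice, the framed link $L_{T'} \subset \boldSigma(D \times I, T')$ has the same components and framings as $L_T$, and it sits entirely in the portion of $\boldSigma(D \times I, T')$ lying over the side of $F$ containing $T$. Since $F$ is disjoint from $T'$, its preimage $\widetilde F = F_1 \sqcup F_2$ consists of two properly imbedded disks, each disjoint from $L_{T'}$, which is exactly the setting of the generalized surface decomposition theorem.

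Next, I will identify the result of the decomposition. Cutting $\boldSigma(D \times I, T')$ along $\widetilde F$ separates it into $\boldSigma(D \times I, T) \sqcup B$, where $B$ is the branched double cover of a $3$--ball over a properly imbedded trivial arc; this $B$ is itself a $3$--ball, equipped with a single disk suture. Under this splitting the framed link $L_{T'}$ corresponds to $L_T \subset \boldSigma(D \times I, T)$ together with the empty link in $B$. Theorem~\ref{thm:SurfDecomp} then identifies $\cF(T')$ with $\cF(T) \otimes \cF(B, \emptyset)$, and since the sutured Floer homology of the standard $3$--ball with one disk suture is $\Z_2$ and there is no surgery link in $B$ to contribute additional filtration levels, $\cF(B,\emptyset)$ is one-dimensional, so the tensor product collapses to $\cF(T)$.

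The main obstacle will be to verify that $\widetilde F$ is a decomposing surface of the type handled by Theorem~\ref{thm:SurfDecomp}: in particular, that the intersection of $\partial F_1 \cup \partial F_2$ with the sutures of $\partial \boldSigma(D \times I, T')$ satisfies the required niceness conditions. This ultimately reduces, via the branched covering map, to the local check that $\partial F$ meets the single suture $\partial D \times \{1/2\}$ of the product sutured manifold $D \times I$ in exactly two points, in the appropriate way. Once this local verification is in place, the remainder of the argument is a formal combination of Theorem~\ref{thm:SurfDecomp}, the standard disjoint-union/tensor-product behavior of sutured Floer homology, and the well-known $SFH$ computation for a $3$--ball.
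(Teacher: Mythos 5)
Your overall strategy matches the paper's: lift $F$ to $\widetilde F \subset \boldSigma(D\times I, T')$, observe it is a decomposing surface disjoint from the surgery link, and apply Theorem~\ref{thm:SurfDecomp}. However, there is a genuine gap at the crucial step. You write that ``Theorem~\ref{thm:SurfDecomp} then identifies $\cF(T')$ with $\cF(T)\otimes\cF(B,\emptyset)$,'' but the theorem does not give an identification of $\cF(T')$ with the decomposed complex --- it only asserts that the decomposed complex is filtered quasi-isomorphic to the \emph{direct summand} $X^{(0,1)}_k$ of $\cF(T')$ in the single $\mathbf{A}_{\widetilde F}$-grading $k=\tfrac12 c(\widetilde F,t)$. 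In the notation of the paper, you have established $\cF(T)\otimes\cF(B) \leq \cF(T')$, which is exactly the $\leq$ statement of Theorem~\ref{thm:SurfDecomp}, and not the desired equality $\cF(T) = \cF(T')$.

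To close this gap you must show that the summand $X^{(0,1)}_k$ is all of $\cF(T')$, i.e.\ that \emph{every} generator of the multi-diagram complex lies in an outer Spin$^c$ structure with respect to $\widetilde F$. The paper does this by choosing the Morse function underlying the surface multi-diagram so that its gradient is everywhere tangent to $\widetilde F$ (this is possible because $\widetilde F$ is vertical/$I$-invariant and $F$ is disjoint from $T'$); one then checks, directly from the definition of the map $\spincs(\cdot)$, that every Spin$^c$ structure on every $Y(\cI)$ is represented by a vector field tangent to $\widetilde F$, and so is nowhere equal to the negative unit normal of $\widetilde F$ --- i.e.\ is outer. This tangency argument is the heart of the proof and is entirely absent from your proposal. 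Relatedly, you identify the ``main obstacle'' as verifying the boundary-niceness conditions on $\partial\widetilde F$; that is a routine local check (two transverse points of $\partial F$ on the suture, lifting to well-behaved arcs), whereas the real work is the outerness argument just described.

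A smaller inaccuracy: after decomposing, the complementary piece $B$ is the double branched cover of $D'\times I$ over a vertical arc, which is again a product sutured manifold $D''\times I$ with a single \emph{annular} suture component (not a ``disk suture''); nonetheless your conclusion that $SFH(B)\cong\Z_2$ and that the tensor factor is trivial is correct.
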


\begin{theoremStack} (Stacking) Let $T_i \subset (D \times I)_i$, for $i=1,2$, be two $n$--balanced tangles,  and let $T_1 + T_2 \subset D \times I$ be any $n$--balanced tangle obtained by stacking a projection, $\cP(T_1)$, of $T_1$ on top of a projection, $\cP(T_2)$, of $T_2$ as in Figure \ref{fig:Stacking}.  Then \[\cF(T_1 + T_2) = \cF(T_1) \otimes \cF(T_2).\]
\end{theoremStack}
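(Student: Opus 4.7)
The plan is to apply the generalization of Juh\'asz's surface decomposition theorem (Theorem~\ref{thm:SurfDecomp}) to a single separating surface inside $\boldSigma(D \times I, T_1+T_2)$, and then invoke the tensor-product factorization of the filtered complex for a disjoint union of framed-linked sutured manifolds.

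First, I would identify the decomposing surface. Let $F \subset D \times I$ be the properly-imbedded horizontal disk pictured in Figure~\ref{fig:Stacking}, separating the projection $\cP(T_1)$ from $\cP(T_2)$ and meeting $T_1 + T_2$ transversely in $n$ points. Its preimage $\widetilde{F} \subset \boldSigma(D \times I, T_1+T_2)$ is a properly-imbedded surface with boundary in the sutured boundary of $\boldSigma(D \times I, T_1+T_2)$. By construction, the framed link $L_{T_1+T_2}$ arising from the stacked projection $\cP(T_1) \cup \cP(T_2)$ splits as $L_{T_1} \sqcup L_{T_2}$, with $L_{T_i}$ supported strictly above or below $F$, and is therefore geometrically disjoint from $\widetilde{F}$. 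Decomposing along $\widetilde{F}$ returns the disjoint union of sutured manifolds $\boldSigma(D \times I, T_1) \sqcup \boldSigma(D \times I, T_2)$, carrying the framed link $L_{T_1} \sqcup L_{T_2}$.

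Next, I would apply Theorem~\ref{thm:SurfDecomp} to this surface decomposition. Since $\widetilde{F}$ is geometrically disjoint from $L_{T_1+T_2}$, the theorem should yield an identification of filtered chain complexes $X(L_{T_1+T_2}) \cong X(L_{T_1} \sqcup L_{T_2})$, with the right-hand side computed inside the disconnected sutured manifold $\boldSigma(D \times I, T_1) \sqcup \boldSigma(D \times I, T_2)$. A Heegaard multi-diagram for such a disjoint union carrying disjoint framed links can always be chosen as a disjoint union of multi-diagrams for the two pieces, whereupon the hypercube of resolutions manifestly splits as a tensor product and $X(L_{T_1} \sqcup L_{T_2}) = X(L_{T_1}) \otimes X(L_{T_2})$ as filtered complexes.

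The main obstacle is verifying that the identification provided by Theorem~\ref{thm:SurfDecomp} takes place at the level of the full filtered chain complex, rather than only at the sutured Floer homology of the final page. I would handle this by tracing the proof of Theorem~\ref{thm:SurfDecomp}: its decomposition-compatible choice of multi-diagram can be arranged so that all surgery curves for $L_{T_1+T_2}$ remain supported in regions disjoint from $\widetilde{F}$, and a periodic-domain argument analogous to those of \cite{MR2253454} and \cite{GT08082817} then shows that the multi-diagram's polygon-counting maps intertwine with the decomposition. Combined with the filtered quasi-isomorphism invariance results of Roberts \cite{GT08082817} and Baldwin \cite{GT08093293}, this gives the claimed equality $\cF(T_1 + T_2) = \cF(T_1) \otimes \cF(T_2)$.
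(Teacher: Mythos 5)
Your overall strategy matches the paper's: identify the horizontal separating disk $F$, pass to its preimage $\widetilde{F}$ in the double-branched cover, observe that $L_{T_1+T_2}$ is geometrically disjoint from $\widetilde{F}$, decompose along $\widetilde{F}$ to obtain the disjoint union $\boldSigma(D\times I, T_1)\sqcup\boldSigma(D\times I,T_2)$, and then use the tensor-product factorization of the filtered complex for a disjoint union of sutured manifolds with disjoint framed links. That last factorization step is fine and is left implicit in the paper as well.

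However, there is a genuine gap in the middle of your argument, and it is not the one you flag in your final paragraph. You write that Theorem~\ref{thm:SurfDecomp} ``should yield an identification of filtered chain complexes $X(L_{T_1+T_2}) \cong X(L_{T_1}\sqcup L_{T_2})$,'' but the theorem does not say that: it says $X(L')$ is filtered quasi-isomorphic to the \emph{direct summand} $X^{(0,1)}_k$ of $X(L)$ sitting in the single ${\bf A}_S$--grading $k=\tfrac{1}{2}c(\widetilde{F},t)$, i.e.\ the summand supported in outer $\mathrm{Spin}^c$ structures. To obtain the equality $\cF(T_1+T_2)=\cF(T_1)\otimes\cF(T_2)$ you must additionally show that this summand is \emph{all} of $\cF(T_1+T_2)$ --- that is, that every generator of the complex lies in an $\widetilde{F}$--outer $\mathrm{Spin}^c$ structure. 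The paper supplies exactly this missing step: after isotoping $\widetilde{F}$ (via the finger moves of \cite[Lem.~4.5]{MR2390347}) so that conditions~(1) and~(2) of Theorem~\ref{thm:SurfDecomp} hold, it constructs a Morse function on $\boldSigma(D\times I, T_1+T_2)$ whose gradient along $\widetilde{F}$ is either positively normal (over the quasi-polygon $P\subset\widetilde{F}$) or tangent (over $\widetilde{F}-\overline{P}$), from which it follows that every $\mathrm{Spin}^c$ structure on each $Y(\cI)$ is represented by a vector field nowhere equal to the negative unit normal of $\widetilde{F}$, hence outer. Without such an argument, your proposal establishes only that $\cF(T_1)\otimes\cF(T_2)$ is a direct summand of $\cF(T_1+T_2)$.

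Your stated ``main obstacle'' --- whether the identification happens at the level of the full filtered complex rather than only on the $E^\infty$ page --- is not actually an obstacle: Theorem~\ref{thm:SurfDecomp} is formulated, and proved via Proposition~\ref{prop:ChainHomEquiv}, precisely as a filtered quasi-isomorphism, so that concern is already handled by the theorem you are invoking. The real work you need to add is the outer-$\mathrm{Spin}^c$ argument above. You should also note that you need to verify that $\widetilde{F}$, suitably isotoped, is an admissible decomposing surface satisfying condition~(2) of Theorem~\ref{thm:SurfDecomp}; the paper handles this with the cited finger moves.
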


\begin{theoremCut}(Cutting) \cite[Thm. 3.1]{AnnularLinks} Let $\bL \subset A \times I$ be a link, and let $T \subset D \times I$ be any balanced tangle admitting a projection whose closure in $A \times I$ is a projection of $\bL$.  Then \[\cF(T) \leq \cF(\bL).\]
\end{theoremCut}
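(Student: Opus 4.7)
The plan is to realize $\cF(T) = X(L_T)$ as a filtered subcomplex of $\cF(\bL) = X(L_\bL)$, following the argument of \cite[Thm.~3.1]{AnnularLinks}. First I would fix compatible projections so that $\cP(T)$ is obtained from $\cP(\bL)$ by cutting along a properly-embedded arc $\gamma \subset A$ with $A \setminus \gamma \cong D$, chosen disjoint from the crossings of $\cP(\bL)$; the crossings of $\cP(T)$ are then in natural bijection with those of $\cP(\bL)$, yielding an identification of the framed links $L_T$ and $L_\bL$. The corresponding cut of the thickened annulus is along the product disk $F = \gamma \times I \subset A \times I$, whose preimage $\widetilde{F}$ in the branched double cover is geometrically disjoint from $L_\bL$; decomposing $\boldSigma(A \times I, \bL)$ along $\widetilde{F}$ produces $\boldSigma(D \times I, T)$.

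The next step is to construct a Heegaard multi-diagram for $(\boldSigma(A \times I, \bL), L_\bL)$ adapted to $\widetilde{F}$, by taking a Heegaard multi-diagram for $(\boldSigma(D \times I, T), L_T)$ and gluing on the piece corresponding to the ``$A \setminus D$'' region together with the requisite extra $\alpha$-- and $\beta$--curves arising from the closure operation. The generators of $X(L_T)$ then embed into those of $X(L_\bL)$ as the tuples supported on the smaller sub-diagram, and the identification of hypercube coordinates guarantees that this embedding respects the homological (resolution) filtration.

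The hard part will be to verify that this inclusion is a filtered chain inclusion, i.e., that no holomorphic polygon in the larger multi-diagram connects a generator in the image of $X(L_T)$ to a generator outside this image. This is ultimately the content of the surface decomposition theorem (in the generalized form, for surfaces disjoint from a framed link, developed in the present paper), applied vertex-by-vertex over the hypercube of resolutions; a standard positivity-of-intersection argument rules out the stray polygons once $\widetilde{F}$ is arranged to be composed of $\alpha$-- and $\beta$--arcs on the Heegaard surface. Assembling the vertex-wise identifications into a filtered chain map and invoking invariance of the filtered quasi-isomorphism type of $X(L_\bL)$ under the resulting multi-diagram choices then yields the desired filtered inclusion $\cF(T) \leq \cF(\bL)$.
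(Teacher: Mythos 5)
Your overall strategy is correct and matches the paper's: decompose along the preimage $\widetilde{F}$ of the product disk $\gamma\times I$ in the branched double cover, and invoke the multi-diagram surface decomposition theorem (Theorem~\ref{thm:SurfDecomp}) with $\widetilde{F}$ playing the role of $S$. However, the details of how you propose to execute it contain genuine gaps.

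Theorem~\ref{thm:SurfDecomp} is not applied ``vertex-by-vertex over the hypercube of resolutions''; it is applied once, to the whole $(0,1)$ sutured multi-diagram. The entire point of generalizing Juh\'asz's theorem to multi-diagrams is to control the higher polygon maps $D_{\cI^1<\cdots<\cI^k}$ that couple distinct hypercube vertices, which a vertex-wise application of the single-diagram theorem says nothing about. More seriously, ``a standard positivity-of-intersection argument'' does not rule out the stray polygons. Juh\'asz's single-diagram argument constrains Maslov-index-$1$ bigon domains combinatorially, but the relevant domains in a multi-diagram are Maslov-index-$0$ $(k{+}1)$-gons, which are far less constrained; positivity of intersections cannot by itself force the intersection of a holomorphic curve with $\pi_\Sigma^{-1}(P)$ to separate into a type-$A$ piece and a type-$B$ piece. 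The paper proves this via the degeneration argument of Lemma~\ref{lem:DomainSplit}: shrink the hyperbolic lengths of the components of $\partial P$ along a sequence of almost complex structures, pass to a subsequence converging to a matched pair of holomorphic combs, and deduce the splitting $\cR_P=\cR_A\amalg\cR_B$ for a suitable limiting complex structure. This degeneration is the essential new technical input and cannot be replaced by the argument you describe. Finally, showing that $X(L_T)$ embeds as a filtered \emph{subcomplex} of $X(L_\bL)$ is not enough: $\cF(T)\leq\cF(\bL)$ asserts a direct summand, and the complementary splitting comes from the fact that the ${\bf A}_{\widetilde{F}}$-grading is preserved by every polygon map (Lemma~\ref{lem:FiltPres} and Proposition~\ref{prop:SpecSeqSplit}), a point your outline does not address.
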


  In the above, ``$\cF_1 = \cF_2$'' means that $\cF_1$ is  filtered quasi-isomorphic (Definition \ref{defn:quasiiso}) to $\cF_2$, and ``$\cF_1 \leq \cF_2$'' means that $\cF_1$ is filtered quasi-isomorphic to a direct summand of $\cF_2$.

These naturality theorems follow from an extension of work of Juh{\'a}sz, who proves, in \cite{MR2390347}, that the Floer homology of a sutured manifold behaves nicely in the presence of admissible {\em decomposing surfaces} (Definition \ref{defn:DecSurf}), properly-imbedded surfaces with boundary intersecting the sutures in a controlled fashion.  In particular, an admissible decomposing surface, $S$, induces a splitting of the sutured Floer chain complex, and performing a {\em surface decomposition} (Definition \ref{defn:DecSurf}) along $S$ picks out a direct summand of the splitting.

Using degeneration techniques suggested to us by Robert Lipshitz, we prove a generalized version of Juh{\'a}sz's surface decomposition theorem, applicable to filtered chain complexes arising from sutured multi-diagrams.  More precisely:

\begin{theoremA}
Let $L \subset (Y,\Gamma)$ be a framed link in a strongly-balanced 
 sutured manifold (Definition \ref{defn:balancedSM}), and let $S \subset (Y,\Gamma)$ be a connected decomposing surface satisfying:
\begin{enumerate}
  \item $S \cap L = \emptyset$, and
  \item for every component $V$ of $R(\Gamma)$ the closed components of the intersection $V \cap S$ consist of parallel oriented boundary-coherent curves (Definition \ref{defn:boundcoh}).
\end{enumerate}

Let $(Y', \Gamma')$ be the sutured manifold obtained by decomposing along $S$ and $L' \subset (Y',\Gamma')$ the induced image of $L$.  If $X(L)$ (resp., $X(L')$) is the filtered complex associated to $L$ (resp., $L'$), then \[X(L') \leq X(L).\]


\end{theoremA}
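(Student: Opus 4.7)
The plan is to adapt Juh\'asz's proof of the surface decomposition theorem from \cite{MR2390347} to the multi-diagram setting used in the definition of $X(L)$. The first step is to construct a sutured Heegaard multi-diagram that is simultaneously compatible with $L$ and adapted to the decomposing surface $S$ in Juh\'asz's sense. The hypothesis $S \cap L = \emptyset$ is the crucial geometric input: because $S$ is geometrically disjoint from $L$, one can first isotope the Heegaard surface so that a collar of $L$ and the surface $S$ sit on opposite sides of a product region, and then arrange for the attaching curves encoding the framed surgery on $L$ (the various $\boldbeta$, $\boldgamma$, $\boldeta$ families) all to lie in the side disjoint from $S$. The boundary-coherence hypothesis on $S \cap R(\Gamma)$ is exactly what is needed to convert $S$ into an admissible decomposing surface in the sense of \cite{MR2390347} after a small isotopy.

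Given such an adapted multi-diagram, the surface $S$ determines, for each intersection point $\mathbf{x}$ in a generating set, an "outer" Spin$^c$ contribution $\outerspinc(\mathbf{x})$ exactly as in Juh\'asz's construction. Call a generator \emph{outer} if this contribution agrees with the Spin$^c$ class prescribed by the surface decomposition. Juh\'asz's argument, applied verbatim on the level of generating sets, yields a natural bijection between the outer generators of the multi-diagram for $(Y,\Gamma,L)$ and the generators of the corresponding multi-diagram for $(Y',\Gamma',L')$. The boundary-coherence condition ensures that the outer generators span a genuine direct summand of each Lagrangian-intersection chain complex in the multi-diagram, rather than merely a subcomplex or quotient.

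The main technical step, and the place where Lipshitz's degeneration techniques enter, is to show that the \emph{polygon} counts (not merely the bigon differentials) respect this splitting. For bigons, this is Juh\'asz's original argument: any Whitney disk connecting generators with different outer Spin$^c$ contributions must nontrivially cross $S$, and boundary-coherence prohibits this. For triangles, quadrilaterals, and higher $k$-gons in the multi-diagram, one stretches the almost-complex-structure neck along a product region encoding $S$. Any polygon connecting an outer generator to a non-outer one must, in the limit, degenerate into a broken configuration one of whose pieces has nonzero intersection with $S$; compatibility of the Spin$^c$ relation across the thin part rules this out. Hence the outer summand is preserved by every polygon map in the multi-diagram, and the resulting sub-multi-diagram is canonically identified with that of $(Y',\Gamma',L')$. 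This identifies the outer summand of $X(L)$ with $X(L')$ as filtered complexes, producing the desired filtered quasi-isomorphism $X(L') \leq X(L)$.

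The hardest step will be the polygon degeneration argument: unlike in the bigon case, polygons cannot be obstructed by a simple Spin$^c$-grading on a fixed chain complex, since they interpolate between chain complexes associated to different tuples of attaching circles. One must therefore work directly with the neck-stretched moduli spaces, identify the limiting broken configurations, and check that each such configuration either carries an obstruction to crossing $S$ coming from boundary-coherence, or else contributes to the multi-diagram for $(Y',\Gamma',L')$. Carefully setting up the product region so that the stretching is compatible with all the attaching curves simultaneously, and verifying transversality for the resulting moduli spaces, is where the bulk of the technical work lies.
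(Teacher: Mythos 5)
Your high-level strategy (construct a multi-diagram simultaneously adapted to $L$ and $S$, identify outer generators, control polygon counts by a degeneration argument) is the same as the paper's, but you conflate two steps that the paper keeps separate, and you leave the harder of the two unaddressed.

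The paper's proof has two logically distinct pieces. First (Lemma~\ref{lem:FiltPres} and Proposition~\ref{prop:SpecSeqSplit}): the polygon maps preserve the $\mathbf{A}_S$--grading, so the complex $X^{(0,1)}$ splits as $\bigoplus_m X^{(0,1)}_m$. This is proved not by a degeneration argument but by a first Chern class computation: one builds the capped-off cobordism $\overline{W}$ between $Y(\cI^1)$ and $Y(\cI^k)$, notes $[S_{\cI^1}]=[S_{\cI^k}]$ in $H_2(\overline{W},\overline{Z})$, and uses naturality of $c_1$ of the relative Spin$^c$ structure $\overline{\spincs}(\Psi)$. Your sketch tries instead to rule out outer-to-non-outer polygons by neck-stretching and then appealing to ``nonzero intersection with $S$'' of a limiting piece; this is not a well-posed obstruction in the multi-diagram setting, where the different $\boldbeta_{\cI}$ represent different manifolds $Y(\cI)$, so there is no single ambient surface for a triangle or pentagon piece to intersect. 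The Chern-class/cobordism argument circumvents exactly this difficulty.

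Second (Lemma~\ref{lem:DomainSplit}, Corollary~\ref{cor:DomainSplit}, Proposition~\ref{prop:ChainHomEquiv}): the degeneration argument is what \emph{identifies} the outer summand $X^{(0,1)}_{k}$ with $(X^{(0,1)})'$ computed from the decomposed multi-diagram $(\Sigma',\boldalpha',\boldbeta'_{\{0,1\}^\ell})$. The degeneration is performed on the conformal structure of the Heegaard surface $\Sigma$, pinching the curves $Z_1,\ldots,Z_m = \partial P$, and the limit objects are matched pairs of holomorphic combs in the sense of \cite{GT08100687}. The conclusion is that for a suitable almost complex structure, any index-$0$ (or index-$1$, for bigons) curve with outer endpoints has $\cR_P = \cR_A \amalg \cR_B$, so that its domain lifts uniquely through the covering projection $p:\Sigma' \to \Sigma$; conversely, curves in $\Sigma'\times\PP_{k+1}$ push forward to embedded curves in $\Sigma\times\PP_{k+1}$. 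Your proposal asserts that ``the resulting sub-multi-diagram is canonically identified with that of $(Y',\Gamma',L')$'' but gives no mechanism. This lift/push-forward correspondence, including the verification that the pushed-forward curve is still embedded (which uses that $\phi_A$ and $\phi_B$ map to different components of $\PP_{k+1} - (\PP_{k+1})_{\Sigma-P}$), is the real content of the theorem and is missing from your sketch. You also need the finiteness coming from admissibility (Corollary~\ref{cor:DomainSplit}) to pass from ``there exists a degenerate enough $j_\Sigma$ for a fixed domain'' to ``a single generic $j_\Sigma$ works for all contributing domains at once.''
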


The paper is organized as follows:
\begin{itemize}
  \item In Section \ref{sec:Notation}, we fix notation.
  \item In Section \ref{sec:LinkSurg}, we discuss splitting of the link surgeries spectral sequence in the presence of a decomposing surface disjoint from the link.
  \item In Section \ref{sec:SurfDecomp}, we prove the surface decomposition theorem for sutured multi-diagrams (Theorem \ref{thm:SurfDecomp}).
  \item In Section \ref{sec:Naturality}, we prove the naturality results (Theorems \ref{thm:AdjoinTrivial}, \ref{thm:Stack}, and \ref{thm:Cutting}).  In this last section, we also discuss the relationship between the stacking operation and a generalized version of the Murasugi sum.
\end{itemize}

{\flushleft {\bf Acknowledgments:}} We thank John Baldwin, Matt Hedden, Nathan Habegger, Andr{\'a}s Juh{\'a}sz, Mikhail Khovanov, Rob Kirby, Robert Lipshitz, Peter Ozsv{\'a}th, Lawrence Roberts, and Liam Watson for interesting conversations.  We are particularly indebted to Robert Lipshitz for providing us with both the key idea in the proof of Theorem \ref{thm:SurfDecomp} and extremely valuable feedback on a preliminary draft.  A portion of this work was completed while the second author was a visiting postdoctoral fellow at Columbia University, supported by a Swiss NSF fellowship for prospective researchers.  We are grateful to the Columbia mathematics department for its hospitality.

\section{Definitions and Notation Conventions} \label{sec:Notation}
We shall assume familiarity with \cite{GT08071432} and \cite{AnnularLinks}, where most relevant background material and notation is collected.  See, in particular:
\begin{itemize}
  \item \cite[Sec. 2]{GT08071432}, which contains necessary definitions and results related to sutured manifolds and sutured Floer homology collected from \cite{MR723813}, \cite{MR2253454}, and \cite{MR2390347};
  \item \cite[Sec. 4]{GT08071432}, which develops Ozsv{\'a}th-Szab{\'o}'s link surgeries spectral sequence (\cite{MR2141852}) for framed links in sutured manifolds;
  \item \cite[Sec. 5]{GT08071432} (resp, \cite[Sec. 2]{AnnularLinks}), which explains how to associate to an admissible balanced tangle, $T$ (resp., link $\bL$) in the product sutured manifold, $D \times I$ (resp, $A \times I$), two chain complexes and a spectral sequence from the first to the second, one arising from a Khovanov-type construction and the other arising from Juh{\'a}sz's Heegaard Floer-type construction applied to $\boldSigma(D \times I, T)$ (resp., $\boldSigma(A \times I, \bL)$).
\end{itemize}

We repeat the following definitions and establish the following notation for the convenience of the reader:
\begin{definition} \cite{MR723813} \label{defn:suturedman}
A {\em sutured manifold} $(Y,\Gamma)$ is a compact, oriented $3$--manifold with boundary $\partial Y$ along with a set $\Gamma \subset \partial Y$ of pairwise disjoint annuli $A(\Gamma)$ and tori $T(\Gamma)$.  The interior of each component of $A(\Gamma)$ contains a {\em suture}, an oriented simple closed curve which is homologically nontrivial in $A(\Gamma)$.  The union of the sutures is denoted $s(\Gamma)$.

Every component of $R(\Gamma) = \partial Y - \mbox{Int}(\Gamma)$ is assigned an orientation compatible with the oriented sutures.  
Let $R_+(\Gamma)$ (resp., $R_-(\Gamma)$) denote those components of $R(\Gamma)$ whose normal vectors point out of (resp., into) $Y$.
\end{definition}

\begin{definition} \cite[Defn. 2.2]{MR2253454} \label{defn:balancedSM} A sutured manifold $(Y,\Gamma)$ is said to be {\em balanced} if $\chi(R_+) = \chi(R_-)$, and the maps $\pi_0(\Gamma) \rightarrow \pi_0(\partial Y)$ and $\pi_0(\partial Y) \rightarrow \pi_0(Y)$ are surjective.\footnote{The equivalence of this definition to the original definition in \cite{MR2253454} is immediate.}

A sutured manifold $(Y,\Gamma)$ is said to be {\em strongly balanced} if for each component $F$ of $\partial Y$, $\chi(F \cap R_+(\Gamma)) = \chi(F \cap R_-(\gamma)$.
\end{definition}

\begin{definition} A {\em product sutured manifold} is a sutured manifold of the form $(F \times I, \partial F \times I)$, where $F$ is an oriented surface with $\partial F \neq \emptyset$ and $I = [0,1]$ is the closed unit interval.
\end{definition}

\begin{definition} \cite[Defn. 5.1-5.2]{GT08071432} \label{defn:balancedtangle} Let $F$ be an oriented surface with $\partial F \neq \emptyset$, and let $(F \times I, \partial F \times I)$ be the associated product sutured manifold, with $F_+:= F \times \{1\}$ (resp., $F_-:= F \times \{0\}$).  

An {\em admissible $n$--balanced tangle}, $T \subset F \times I$ is (any representative of) an equivalence class of properly-imbedded (unoriented) $1$--manifolds satisfying

\begin{itemize}
  \item $T \cap \partial(F \times I) \subset \mbox{Int} (F_+) \cup \mbox{Int} (F_-)$,
  \item $T_1, T_2$ are equivalent if they can be connected by an ambient isotopy acting trivially on $\partial F \times I$,
  \item $|T \cap F_+| = |T \cap F_-| = n \in \Z_{\geq 0}$.
\end{itemize}

We will often refer to $0$--balanced tangles as links.
\end{definition}

Let $D$ represent an oriented disk and $A$ an oriented annulus.  In the present work, we will focus on admissible balanced tangles $T \subset D \times I$ and links $\bL \subset A \times I$.





\begin{notation} \label{defn:SpecSeqFiltCpx} Let $\cF$ be a filtered chain complex.  We will denote by $ss(\cF)$ the spectral sequence induced by $\cF$ and $E^{i}(\cF)$ its $i$--th page.
\end{notation}

\begin{definition}\label{defn:quasiiso} Let $\cF_1$, $\cF_2$ be two filtered chain complexes.  We shall say that $\cF_1$, $\cF_2$ are filtered quasi-isomorphic if there exists a third filtered chain complex, $\cF'$, and filtered chain maps \[\phi_j: \cF_j \rightarrow \cF',\] such that \[\phi_j: E^i(\cF_j) \rightarrow E^i(\cF')\] is an isomorphism for all $i \in \Z_+$, $j = 1,2$. 

If $\cF_1$ and $\cF_2$ are filtered quasi-isomorphic, we shall say \[\cF_1 = \cF_2.\]
\end{definition}

\begin{notation} \label{defn:FiltCpxTangle} 
Let $D$ denote an oriented disk, $A$ an oriented annulus, and $I = [0,1]$ the oriented closed unit interval. 

Let $T \subset D \times I$ (resp., $\bL \subset A \times I$) be an admissible balanced tangle (resp., link) in the product sutured manifold $D \times I$ (resp., $A \times I$), and let $L_T \subset \boldSigma(D \times I, T)$ (resp., $L_\bL \subset \boldSigma(A \times I, \bL)$) be its associated surgery link in the double-branched cover, constructed as in \cite[Sec. 5]{GT08071432} (resp., \cite[Sec. 2]{AnnularLinks}) by taking the preimage of simple arcs at each crossing of the projection of $T \subset D \times I \subset \R^3$ to the $xz$ plane (resp., of $\bL \subset A \times I$ to $A$).

We will denote by $\cF(T)$ (resp., $\cF(\bL)$) the filtered complex inducing the link surgeries spectral sequence associated to $L_T$ (resp., $L_\bL$).  Recall that $E^2(\cF(T))$ (resp., $E^2(\cF(\bL))$) is an appropriate version of the Khovanov homology of $T$ (resp., $\bL$) and $E^\infty(\cF(T))$ (resp., $E^\infty(\cF(\bL))$ is $SFH(\boldSigma(D\times I,T))$ (resp., $SFH(\boldSigma(A \times I, \bL))$.  Note that $\cF(T)$ (resp., $\cF(\bL)$) is well-defined, up to filtered quasi-isomorphism (see Remark \ref{rmk:SpecSeqInv}).
\end{notation}


In addition, we note:

\begin{enumerate}
  \item Unless explicitly stated otherwise, every sutured manifold encountered in the present paper may be assumed to be {\em strongly balanced} with no toroidal sutures (i.e., $T(\Gamma) = \emptyset$).  Each sutured manifold will furthermore be endowed with a Riemannian metric, along with a canonical unit vector field along $\partial Y$, denoted $v_0$ (Notation \ref{not:v0}).
  \item If $(Y,\Gamma)$ is a ``standard'' sutured manifold of one of the types describe in \cite[Ex. 2.5-2.7]{GT08071432} or obtained from such a sutured manifold by taking a cyclic branched cover over an admissible, properly imbedded $1$--manifold as described in \cite[Defn. 2.11]{GT08071432}, we will sometimes omit reference to $\Gamma$ in the notation.
  \item $SFH(Y)$ will denote the {\em sutured Floer homology} \cite{MR2253454} of $(Y,\Gamma)$, and $SFH(Y;\spincs)$ will denote the sutured Floer homology of $(Y,\Gamma)$ in the Spin$^c$ structure $\spincs$.  Recall that a Spin$^c$ structure on a sutured manifold is a homology class of unit vector fields on $(Y,\Gamma)$, all of which agree with $v_0$ on $\partial Y$ (see Definition \ref{defn:spinc}).  $SFH(Y)$ is the homology of any chain complex, $CFH(Y)$, obtained by applying the standard Heegaard Floer construction to the two half-dimensional tori, $\Torus_\alpha, \Torus_\beta \subset Sym^d(\Sigma)$, associated to a balanced Heegaard diagram, $(\Sigma, \boldalpha,\boldbeta)$, for $(Y,\Gamma)$.  In particular, generators of $CFH(Y)$ are elements ${\bf x} \in \Torus_\alpha \cap \Torus_\beta$.  See \cite{MR2253454} for more details.
\end{enumerate}

\section{Splitting the Link Surgeries Spectral Sequence}\label{sec:LinkSurg}
In \cite[Sec. 4]{GT08071432} (following \cite[Sec. 4]{MR2141852}), it was proved that an oriented, framed link \[L = L_1 \cup \ldots \cup L_\ell\] in a balanced, sutured manifold $(Y,\Gamma)$ induces a link surgeries spectral sequence which converges to $SFH(Y,\Gamma)$, the sutured Floer homology of $(Y,\Gamma)$. In this section, we prove that this spectral sequence splits as a direct sum of spectral sequences in the presence of a properly-imbedded surface $(S,\partial S) \subset (Y,\partial Y)$ satisfying $L \cap S = \emptyset$.  This generalizes \cite[Sec. 7]{GT07060741}, which treats the case of a Seifert surface in a sutured knot complement.

\subsection{Sutured multi-diagrams, Spin$^c$ structures, and Alexander gradings}\label{subsec:Agradings}

In order to set up the statement of the result, we recall some definitions.

\begin{notation} \label{not:v0} \cite[Not. 3.1]{MR2253454} Let $v_0$ be a non-zero vector field along $\partial Y$ that points into $Y$ along $R_-(\Gamma)$, points out of $Y$ along $R_+(\Gamma)$, and on $\Gamma$ is the gradient of the height function $s(\Gamma) \times I \rightarrow I$.
\end{notation}

\begin{definition} \label{defn:spinc} \cite[Sec. 4]{MR2253454} A {\em Spin$^c$ structure}, $\mathfrak{s}$, on a sutured manifold, $(Y,\Gamma)$ is a homology class of nowhere-vanishing vector fields on $Y$, all agreeing with $v_0$ on $\partial Y$.  Two non-vanishing vector fields are said to be homologous if, away from finitely many points in $\mbox{Int}(Y)$, they are homotopic through non-vanishing vector fields, rel $\partial Y$.  The set of Spin$^c$ structures on $(Y,\Gamma)$ will be denoted Spin$^c(Y,\Gamma)$.
\end{definition}

\begin{definition} \cite[Defn. 3.6]{MR2390347} If $\spincs \in$ Spin$^c(Y,\Gamma)$, and $t$ is a trivialization of $v_0^\perp$, then $c_1(\mathfrak{s},t)$ is the relative Euler class of $(v_0)^\perp$ with respect to $t$.
\end{definition}

\begin{remark}Note that if $(Y,\Gamma)$ is strongly-balanced, then $(v_0)^\perp$ is trivializable (\cite[Prop. 3.4]{MR2390347}).\end{remark}

With these notions in place, we see that a properly-imbedded surface, $(S,\partial S) \subset (Y,\partial Y)$, in a balanced, sutured $3$--manifold, $(Y,\Gamma)$, induces a splitting of any chain complex, $CFH(Y)$, used to compute $SFH(Y)$.  In particular, let $[S] \in H_2(Y,\partial Y;\Z)$ denote the homology class of $S$ and define: 
\[ \spincs_k(S) := \{\spincs \in Spin^c(Y,\Gamma) \,\,|\,\,\langle c_1(\spincs,t),[S]\rangle = 2k.\}\]

Subject to the map $\spincs: \Torus_\alpha \cap \Torus_\beta \rightarrow Spin^c(Y,\Gamma)$ defined in \cite[Sec. 4]{MR2253454} (following \cite[Sec. 2.6]{MR2113019}), $[S]$ endows the generators of $CFH(Y)$ with a $\frac{1}{2}\Z$--grading, which we will call the {\it $\mbox{Alexander}_S$--grading}, or ${\bf A}_S$--grading, for short.  In particular, if ${\bf x} \in \Torus_\alpha \cap \Torus_\beta$ is a generator of $CFH(Y)$, then we define \[{\bf A}_S({\bf x}):= \frac{1}{2}\langle c_1(\spincs({\bf x}),t),[S]\rangle.\]  Since the differential on the complex is Spin$^c$-structure preserving, it is, in particular, ${\bf A}_S$--grading preserving.  Hence, $[S]$ induces a decomposition of $SFH(Y)$:
\begin{eqnarray*}
SFH(Y) &=& \bigoplus_{k \in \frac{1}{2}\Z} SFH(Y;\spincs_k(S)).\\
\end{eqnarray*}

In other words, \[{\bf A}_S({\bf x}) = k \Longleftrightarrow \spincs({\bf x}) \in \spincs_k(S).\]

\begin{remark}  Gradings induced on sutured Floer chain complex generators by properly-imbedded surfaces were defined by \Juhasz in \cite[Sec. 4]{GT08023415}.  Note that changing the trivialization induces an overall shift in the ${\bf A}_S$ gradings by half the rotation number around $\partial S$ of the new trivialization with respect to the old, by \cite[Lem. 3.11]{GT08023415}.\end{remark}


Now, suppose $L = L_1 \cup \ldots \cup L_\ell$ is an oriented,  framed link in a sutured manifold $(Y,\Gamma)$.  Denote by $(\lambda_1, \ldots, \lambda_\ell)$ the $\ell$--tuple of framings and $(\mu_1, \ldots, \mu_\ell)$ the $\ell$--tuple of meridians.  Then to each ``multi-framing,'' \[\cI = (m_1, \ldots, m_\ell) \in \{0,1,\infty\}^\ell,\] in the sense of \cite[Sec. 4]{MR2141852}, we can associate a sutured manifold, $Y_{\cI}$, obtained by doing $m_i$--framed surgery on $L_i$ for each $i = 1, \ldots, \ell$.   Here $\infty$ means no surgery, $0$ means $\lambda_i$--framed surgery, and $1$ means $(\lambda_i + \mu_i)$--framed surgery.

Beginning with the data of a bouquet $L\cup a_1\cup\ldots\cup a_{\ell}$ (where $a_i$ is an embedded arc in $Y$ whose interior is disjoint form $L$ and from $a_j$ for $j\neq i$, and which connects a point on $L_i$ to a point on $R_+(\Gamma)$), we can construct a sutured Heegaard diagram $(\Sigma,\boldalpha,\boldbeta_{\cI})$ representing a sutured manifold, $\YI$, homeomorphic to $Y_{\cI}$, as follows. Let \[(\Sigma,\{\alpha_1,\ldots,\alpha_d\},\{\beta_{\ell+1},\ldots \beta_d\})\] be a sutured Heegaard diagram for $Y-L'$, where \[L':=N(L\cup a_1\cup\ldots\cup a_{\ell})\] is a regular neighborhood of the bouquet, and let $\gamma_1,\ldots, \gamma_{\ell}\subset \partial(Y-L')$ be disjoint simple closed curves specifying the multi-framing $\cI=(m_1 \ldots,m_{\ell})$. If we set $(\beta_{\cI})_i:=\gamma_i$ for $i\leq \ell$ and $(\beta_{\cI})_i:=\beta_i$ for $\ell < i \leq d$, then $(\Sigma,\boldalpha,\boldbeta_{\cI})$ is a Heegaard diagram representing the sutured manifold, $\YI$, obtained from $Y-L'$ by attaching $3$-dimensional $2$-handles along the curves $\gamma_i\subset Y-L'$.  
Furthermore, there is a diffeomorphism\footnote{when considered as a map between smooth manifolds with corners, cf. \cite{MR1930091}.}
$$
f_{\cI}\colon Y_{\cI}\longrightarrow Y(\cI),
$$
which restricts to the identity map on $Y_{\cI}-L''=Y-L''$, where $L''$ denotes a suitably chosen thickening of the regular neighborhood $L'$.  In particular, this implies that $Y_{\cI}$ and $\YI$ are equivalent as sutured manifolds and that we have a canonical inclusion $i_{\cI}:=f_{\cI}|_{Y-L''}$ of $Y-L''$ into $Y(\cI)$ for each $\cI \in \{0,1,\infty\}^\ell$. Moreover, $f_{\cI}|_{\partial Y_{\cI}}$ agrees with the identity map on $(\partial Y_{\cI})-L''$, and since $L''\cap\partial Y_{\cI}$ is a disjoint union of disks, this determines $f_{\cI}|_{\partial Y_{\cI}}$ uniquely up to isotopy relative to $(\partial Y_{\cI})-L''$.

\begin{definition}\label{defn:successor}
We endow the set $\{0,1,\infty\}^{\ell}$ with the dictionary order, and we call a tuple $\cI' = (m_1', \ldots, m_\ell') \in \{0,1,\infty\}^\ell$ an {\it immediate successor} of $\cI= (m_1, \ldots, m_\ell)$ if there exists some $j$ such that $m_i = m_i'$ when $i\neq j$ and $(m_j,m_j')$ is either $(0,1)$ or $(1,\infty)$.
\end{definition}



It is explained in \cite[Sec. 4]{GT08071432} (following \cite[Sec. 4]{MR2141852}) how to construct a sutured Heegaard multi-diagram \[\left(\Sigma,\boldalpha,\boldbeta_{\cI_{i_1}}, \ldots, \boldbeta_{\cI_{i_m}}\right)\] for every subset \[\{\cI_{i_1}, \ldots, \cI_{i_m}\} \subseteq \{0,1,\infty\}^\ell,\] beginning with the data of a bouquet subordinate to $L$.

\begin{notation} \label{defn:HDforsubset}
Let $\{\cI_{i_1}, \ldots, \cI_{i_m}\} \subseteq \{0,1,\infty\}^\ell$ be any nonempty subset.  We denote by $\left(\Sigma, \boldalpha, \boldbeta_{\{\cI_{i_1}, \ldots, \cI_{i_m}\}}\right)$ any sutured multi-diagram compatible with $L$, produced by the method outlined in \cite[Sec. 4]{GT08071432}.  I.e., \[\left(\Sigma, \boldalpha, \boldbeta_{\{\cI_{i_1}, \ldots, \cI_{i_m}\}}\right):=\left(\Sigma, \boldalpha,\boldbeta_{\cI_{i_1}}, \ldots, \boldbeta_{\cI_{i_m}}\right).\]  In particular,  for each $\cI \in \{\cI_{i_1}, \ldots, \cI_{i_m}\}$,  $(\Sigma, \boldalpha, \boldbeta_{\cI})$ is a sutured Heegaard diagram for $\YI$.
\end{notation}

We will have particular interest in $\{0,1,\infty\}^\ell \subseteq \{0,1,\infty\}^\ell$ and $\{0,1\}^\ell \subset \{0,1,\infty\}^\ell$.  Accordingly:

\begin{definition} \label{defn:HDforlink} Given a framed $\ell$--component link $L$, we call any $\left(\Sigma, \boldalpha, \boldbeta_{\{0,1,\infty\}^\ell}\right)$ a {\em full sutured multi-diagram for L} and $\left(\Sigma, \boldalpha, \boldbeta_{\{0,1\}^\ell}\right)$ a {\em $(0,1)$ sutured multi-diagram for L}.
\end{definition} 

From these sutured multi-diagrams, one obtains chain complexes:

\begin{itemize}
\item 
From a full sutured multi-diagram for $L$, one constructs a chain complex \[X = \bigoplus_{\cI \in \{0,1,\infty\}^\ell} CFH(\YI),\] where $CFH(\YI)$ is the chain complex associated to $(\Sigma, \boldalpha,\boldbeta_{\cI})$.  The differential \[D: X \rightarrow X\] is specified, on a generator $\xi \in CFH(\YI)$, by \[D\xi := \sum_{\mathcal{J}} \sum_{\{\cI = \cI^0<\ldots<\cI^j = \mathcal{J}\}} D_{\cI^1<\ldots<\cI^j}(\xi).\] Here the index set of the inner sum is taken over the set of all increasing sequences $\cI$ to $\mathcal{J}$ having the property that $\cI^{i+1}$ is an immediate successor of $\cI^i$, and $D_{\cI^0< \ldots < \cI^j}$ is defined by counting certain holomorphic $j+2$--gons in the sutured multi-diagram, $(\Sigma, \boldalpha, \boldbeta_{\cI^1}, \ldots, \boldbeta_{\cI^j})$.\\
\item 
From a $(0,1)$ sutured multi-diagram for $L$, one similary constructs the complex $X^{(0,1)} \subset X$, with restricted differential $D^{(0,1)}$.  It is then proved, in \cite[Prop. 4.1]{GT08071432}, that $X^{(0,1)}$ is a filtered chain complex, whose associated spectral sequence has $E^1$ term \[\bigoplus_{\cI \in \{0,1\}^\ell} SFH(\YI),\] and $E^\infty$ term $SFH(Y(\cI_\infty))$, where $\cI_\infty := (\infty, \ldots, \infty)$.
\end{itemize}

\begin{remark} \label{rmk:SpecSeqInv} By work of Roberts in \cite[Sec. 7]{GT08082817}, we know that the filtered quasi-isomorphism type of $X$ (resp., $X^{(0,1)}$) is independent of the choice of full (resp., $(0,1)$) sutured multi-diagram for $L$ and, hence, each term of the associated spectral sequence is an invariant of the framed link $L \subset (Y,\Gamma)$. 

More specifically, any two full or $(0,1)$ sutured multi-diagrams compatible with a given framed link $L \subset (Y,\Gamma)$ are related by a sequence of (de)stabilizations, isotopies, and handleslides.  Roberts, in \cite[Sec. 7]{GT08082817}, constructs a filtered chain map associated to each type of move, which he proves is a filtered quasi-isomorphism.\footnote{More precisely, Roberts proves the existence of a ``$1$--quasi-isomorphism,'' a filtered chain map inducing an isomorphism on the $E^1$ page (and, hence, an isomorphism on every subsequent page) of the associated spectral sequence.}  In the case of a (de)stabilization, this map is the obvious filtered chain isomorphism \cite[Sec. 10]{MR2113019}, while in the case of an isotopy (resp., handleslide), the map is obtained by counting holomorphic polygons in a sutured multi-diagram containing the curves before and after the isotopy (resp., handleslide).  To prove that each map is a filtered chain map, he uses polygon associativity \cite[Sec. 4]{MR2141852}, verifying in each case that certain ends of $1$--dimensional moduli spaces of polygons cancel in pairs as in \cite[Lem. 4.5]{MR2141852}.  Since each map agrees with the one defined by Ozsv{\'a}th-Szab{\'o} in the proof of the isotopy (resp., handleslide) invariance of Heegaard Floer homology (see \cite{MR2113019}, \cite{MR2253454}), it induces an isomorphism on the $E^1$ page.  

Furthermore, if a link $L_T \subset \boldSigma(D\times I, T)$ (resp., $L_\bL \subset \boldSigma(A \times I, \bL)$) is obtained from an admissible balanced tangle $T \subset D \times I$ (resp., a link $\bL \subset A \times I$) as in \cite{GT08071432} (resp., \cite{AnnularLinks}), then each page of the link surgeries spectral sequence associated to $L_T$ (resp., $L_\bL$) is an invariant of $T$ (resp., $\bL$).  This follows immediately from \cite{GT08093293}, in which Baldwin proves that if two projections of a link $\bL$ in $S^3$ are related by a sequence of Reidemeister moves, then the resulting link surgeries spectral sequence for $L_\bL$ is an invariant of $\bL$.  His arguments transfer to our more general setting without change.  We need only note:
  \begin{enumerate}
    \item Any two projections of a balanced tangle $T \subset D \times I$ are related by projection isotopies, Reidemeister moves, and insertion/deletion of a braid at either the top or bottom.  Projection isotopies do not affect $L_T$, and insertion/deletion of a braid at the top or bottom results in a filtered quasi-isomorphic complex, because all resolutions of a braid have backtracking except the trivial one (see \cite[Sec. 5.2]{GT08071432}).
    \item Any two diagrams for a link $\bL \subset A \times I$ are related by diagram isotopies and Reidemeister moves which, by \cite{GT08093293}, yield filtered quasi-isomorphic complexes.
  \end{enumerate} 
\end{remark}

Bearing Remark \ref{rmk:SpecSeqInv} in mind, if $L \subset (Y,\Gamma)$ is a framed link, we will henceforth refer to {\em the} link surgeries spectral sequence associated to $L$.  Similarly, if a link $L_T \subset \boldSigma(D\times I, T)$ (resp., $L_\bL \subset \boldSigma(A \times I, \bL)$) is obtained from an admissible balanced tangle $T \subset D \times I$ (resp., a link $\bL \subset A \times I$) as in \cite{GT08071432} (resp., \cite{AnnularLinks}), then we will refer to {\em the} spectral sequence associated to $T$ (resp., $\bL$).

Now suppose $L = L_1 \cup \ldots \cup L_\ell \subset (Y,\Gamma)$ is geometrically disjoint\footnote{Note that if $L_i \cap S = 0$ algebraically, one can find a homologous surface, $S'$, which satisfies this condition.} from a given oriented, imbedded decomposing surface $(S,\partial S) \subset (Y,\partial Y)$ 
satisfying all of the hypotheses of Theorem~\ref{thm:SurfDecomp}. In view of \cite[Lem.~4.5]{MR2390347}, we can then assume that $S$ may be isotoped in a neighborhood of $\partial Y$ so that it is {\em good}, i.e. so that each component of $\partial S$ intersects both $R_+(\Gamma)$ and $R_-(\Gamma)$.  This, in turn, implies that each component of $Y-S$ intersects both $R_+(\Gamma)$ and $R_-(\Gamma)$, so we can find a bouquet $L\cup a_1\cup\ldots\cup a_{\ell}$ geometrically disjoint from $S$. Denoting by $L''$ a (thickened) regular neighborhood of $L\cup a_1\cup\ldots\cup a_{\ell}$ as in the paragraph preceding Definition~\ref{defn:successor}, we can in fact assume that $S$ is fully contained in $Y-L''$. In the following definitions, $\cI\in\{0,1,\infty\}^{\ell}$ is an arbitrary multi-framing, and $Y(\cI)$ is the sutured manifold defined in the paragraph preceding Definition~\ref{defn:successor}.

\begin{definition}\label{defn:CompSurf} Let $(S,\partial S)\subset (Y,\partial Y)$ be a decomposing surface that is fully contained in $Y-L''$. Then we denote by $\SI$ the image of $S$ under the natural imbedding $Y-L''\hookrightarrow\YI$, and we say that $\SI\subset\YI$ is {\it compatible} with $S\subset Y$.
\end{definition}

Now fix a trivialization, $t$, of the oriented $2$-plane field $(v_0)^{\perp}$ on $\partial Y$.  Recall that $Y_\cI$ is the sutured manifold obtained by performing $\cI$--surgery on $L \subset Y$, hence $\partial Y_\cI=\partial Y$ for all $\cI \in \{0,1,\infty\}^\ell$.  

\begin{definition} For each $\cI\in\{0,1,\infty\}^{\ell}$, let $f_{\cI}: Y_\cI \rightarrow \YI$ be the smooth map introduced in the paragraph preceding Definition~\ref{defn:successor} and \[df_{\cI}: TY_\cI \rightarrow T\YI\] its differential.  Then we denote by $t_{\cI}:=df_{\cI}(t)$ the trivialization of $(v_0)^\perp$ on $\partial\YI$ induced by $t$, and we say that $t_\cI$ is {\em compatible} with $t$.
\end{definition}


Given the trivializations $t_{\cI}$, we can endow each chain complex, $CFH(\YI)$, with ${\bf A}_{\SI}$--gradings and this, in turn, endows \[X = \bigoplus_{\cI \in \{0,1,\infty\}^\ell} CFH(\YI)\] with ${\bf A}_S$ gradings.  The following lemma verifies that the differential, $D$, in the complex, $X$, respects this grading.

\begin{lemma} \label{lem:FiltPres} Let $L = L_1 \cup \ldots \cup L_\ell \subset (Y,\Gamma)$ be an oriented, framed link and $S \subset (Y,\Gamma)$ a decomposing surface satisfying the conditions of Theorem \ref{thm:SurfDecomp}.   Suppose 
$\cI^1 < \ldots < \cI^k$ is a sequence of multi-framings such that $\cI^{i+1}$ is an immediate successor of $\cI^{i}$ for each $i \in \{1, \ldots, k-1\}$, and $\left(\Sigma, \boldalpha,\boldbeta_{\cI^1}, \ldots, \boldbeta_{\cI^k}\right)$ is an associated sutured multi-diagram.  Then the map $D_{\cI^1 < \ldots < \cI^k}$ obtained by counting holomorphic $(k+1)$--gons preserves ${\bf A}_{S}$--gradings.
\end{lemma}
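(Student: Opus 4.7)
The plan is to associate to each holomorphic $(k+1)$-gon $\psi$ contributing to $D_{\cI^1<\ldots<\cI^k}$ a 4-manifold-with-corners cobordism $W(\psi)$, obtained by gluing the handlebodies specified by $\boldalpha,\boldbeta_{\cI^1},\ldots,\boldbeta_{\cI^k}$ along $\Sigma$, equipped with a Spin$^c$ structure $\spincs(\psi)$ that restricts on the appropriate boundary components to $\spincs({\bf x}_0)$, $\spincs({\bf x}_k)$, and $\spincs(\Theta_i)$ for the top generators $\Theta_i\in\Torus_{\boldbeta_{\cI^i}}\cap\Torus_{\boldbeta_{\cI^{i+1}}}$, $i=1,\ldots,k-1$, at the remaining vertices of $\psi$. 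The trivializations $t_{\cI^i}$ glue to a common trivialization $t_W$ over $\partial Y\times[0,1]\subset\partial W(\psi)$, giving meaning to a relative first Chern class $c_1(\spincs(\psi),t_W)$. Grading preservation will follow from computing the pairing of this class with a carefully chosen 2-cycle $\widehat{S}\subset W(\psi)$ extending $S$ across the cobordism.

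Since $S$ is geometrically disjoint from $L$, after a small isotopy (cf.~\cite[Lem.~4.5]{MR2390347}) we may assume $S$ is disjoint from the bouquet neighborhood $L''$, and so embeds via $i_\cI$ as $S_\cI\subset\YI$ for every $\cI$. In particular, $S$ is disjoint from all of the 2-handles used to construct $W(\psi)$, so there is a properly embedded relative 2-cycle $\widehat{S}\subset(Y-L'')\times[0,1]\subset W(\psi)$ whose intersection with each boundary component of $W(\psi)$ is the corresponding $S_{\cI^i}$. Naturality of the relative first Chern class under restriction to the boundary, combined with the additivity of Alexander gradings across vertices of $\psi$, then yields
$$
\langle c_1(\spincs(\psi),t_W),[\widehat{S}]\rangle \;=\; 2{\bf A}_{S_{\cI^k}}({\bf x}_k) - 2{\bf A}_{S_{\cI^1}}({\bf x}_0) - 2\sum_{i=1}^{k-1}{\bf A}_S(\Theta_i).
$$
The left-hand side vanishes because $\widehat{S}$ lies in the trivial product region of $W(\psi)$, on which $\spincs(\psi)$ is pulled back from a single Spin$^c$ structure on $Y-L''$; combined with the vanishing of each ${\bf A}_S(\Theta_i)$ verified below, this gives ${\bf A}_{S_{\cI^1}}({\bf x}_0)={\bf A}_{S_{\cI^k}}({\bf x}_k)$, as required.

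The remaining task is to show ${\bf A}_S(\Theta_i)=0$. The generator $\Theta_i$ is the canonical top intersection in $\Torus_{\boldbeta_{\cI^i}}\cap\Torus_{\boldbeta_{\cI^{i+1}}}$, and the two sets of $\boldbeta$-curves differ only in a small region of $\Sigma$ corresponding to the single surgery handle distinguishing $\cI^i$ from $\cI^{i+1}$; this region lies inside a neighborhood of $L''$ and is therefore disjoint from $S$. Invoking Juh\'asz's first Chern class formula \cite{GT08023415} with a Heegaard diagram chosen generically with respect to $S$ then gives the desired vanishing. The main technical hurdle lies in the explicit construction of the cobordism $W(\psi)$ and the 2-cycle $\widehat{S}$ within the sutured multi-diagram framework, and in verifying that restriction of the relative Chern class to each boundary component recovers the ${\bf A}_S$-grading of the corresponding vertex; the argument closely parallels Roberts' treatment of the Seifert surface case in \cite[Sec.~7]{GT07060741}, suitably adapted to the more general decomposing-surface hypotheses of Theorem~\ref{thm:SurfDecomp}.
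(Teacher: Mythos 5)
Your strategy---associating to $\Psi$ a $4$--manifold built from the sutured multi-diagram, constructing a relative Spin$^c$ structure restricting correctly on the boundary, and comparing Chern class pairings---is the same general route the paper takes. However, you omit a step that turns out to be essential, and this creates a genuine gap.

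The paper caps off the boundary pieces $Y_{\beta_{\cI^i},\beta_{\cI^{i+1}}}$ at the ``theta'' vertices by gluing in $4$--manifolds $W_i$ (each obtained from $U_{\beta_{\cI^i}}\times[0,1]$ by attaching a $4$--dimensional $2$--handle along the framed knot $K_i\times\{1\}$), producing $\overline{W}$. The resulting $\overline{W}$ is a genuine cobordism-with-corners between $Y_{\alpha,\beta_{\cI^1}}$ and $Y_{\alpha,\beta_{\cI^k}}$ which is, away from the surgery handles, a product. It is this that supplies the key observation that $S_{\cI^1}$ and $S_{\cI^k}$ are isotopic in $\overline{W}$ rel $\overline{Z}$, hence $[S_{\cI^1}]=[S_{\cI^k}]$ in $H_2(\overline{W},\overline{Z};\Z)$; equality of ${\bf A}_S$--gradings then follows immediately by restricting $c_1(\overline{\spincs}(\Psi),t_{\overline{Z}})$ to the two ends. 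No contribution from the theta vertices ever needs to be analyzed.

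By working with the uncapped $W(\psi)$ you forgo this product structure, and three problems result. First, the claimed inclusion $(Y-L'')\times[0,1]\subset W(\psi)$ does not hold before capping off: $W$ is built from $\PP_{k+1}\times\Sigma$ with compression bodies $e_i\times U_i$ glued along distinct edges of the polygon, and $U_\alpha$ is attached only over $e_0$, so there is no globally embedded $(Y-L'')\times[0,1]$ --- that product only appears after the $W_i$'s are glued in to fill the corners. Second, the quantities ${\bf A}_S(\Theta_i)$ appearing in your displayed formula are never defined: $\Theta_i$ is a generator for the sutured manifold $Y_{\beta_{\cI^i},\beta_{\cI^{i+1}}}$, in which $S$ does not naturally embed as a decomposing surface (the intersection of any candidate $\widehat{S}$ with this boundary piece is a different surface, built from two copies of the $B$--side of $S$ rather than an $A$--side and a $B$--side). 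Your appeal to Juh\'asz's $c_1$ formula does not resolve this, since it never specifies the surface against which the pairing is taken. Third, $\widehat{S}$ is called a ``relative $2$--cycle'' but is required to intersect each $3$--dimensional boundary component of $W(\psi)$ in a surface $S_{\cI^i}$, which forces $\widehat{S}$ to be $3$--dimensional; but then $\langle c_1(\spincs(\psi),t_W),[\widehat{S}]\rangle$ is not a dimensionally meaningful pairing. Adopting the capping-off construction removes all three difficulties at once and makes the $\Theta_i$--terms unnecessary.
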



\begin{proof}
Let $\Psi \in \pi_2({\bf x}, \boldtheta, \ldots, \boldtheta, {\bf y})$ be a domain representing a $(k+1)$--gon contributing to $D_{\cI^1< \ldots < \cI^k}$, where
\begin{enumerate}
  \item ${\bf x} \in \Torus_\alpha \cap \Torus_{\beta_{\cI^1}}$
  \item ${\bf y} \in \Torus_{\alpha} \cap \Torus_{\beta_{\cI^k}}$, and
  \item $\boldtheta$ is the canonical top-degree generator in each of $\Torus_{\beta_{\cI^i}} \cap \Torus_{\beta_{\cI^{i+1}}}$ for each $i \in \{1, \ldots, k-1\}$.
\end{enumerate}

Recall (see \cite[Sec. 3]{GT08071432}) that one associates to a sutured multi-diagram, $(\Sigma, \boldeta^0, \ldots, \boldeta^k)$, 
a $4$--manifold:
\[ W_{\eta^0, \ldots, \eta^k}:= \frac{(P_{k+1} \times \Sigma) \coprod_{i=0}^k (e_i \times U_i)}{(e_i \times \Sigma) \sim (e_i \times \partial U_i)}.\]
Here, $U_i$ is the compression body obtained from $\Sigma\times [0,1]$ by attaching a $3$-dimensional $2$-handle along each curve $\eta^i_j\times\{1\}$, for each $\eta^i_j\in\boldeta^i$. $P_{k+1}$ is a topological $(k+1)$--gon, with vertices labeled $v_i$, for $i \in \Z_{k+1}$, clockwise, and $e_i$ is the edge connecting $v_i$ to $v_{i+1}$.
Note that for each $i \in \Z_{k+1}$, the sutured manifold \[-Y_{\eta^i \eta^{i+1}}:=U_i\bigcup_{\Sigma\times\{0\}}-U_{i+1}\] sits naturally as a subset of $\partial W_{\eta^0, \ldots, \eta^k}$, so we may define:
\begin{equation} \label{eqn:Z}
Z :=\partial W_{\eta^0, \ldots, \eta^k}-\bigcup_{i\in \Z_{k}}\operatorname{Int}(-Y_{\eta^i \eta^{i+1}}).
\end{equation}

Letting $W$ denote the $4$--manifold obtained as above from the sutured multi-diagram $(\Sigma, \boldalpha, \boldbeta_{\cI^1}, \ldots, \boldbeta_{\cI^k})$, we obtain a related $4$--manifold, $\overline{W}$, by appropriately ``capping off'' certain subsets of the boundary of $W$.   We will view $\overline{W}$ as a cobordism (with corners) between $Y_{\alpha,\beta_{\cI^1}}$ and $Y_{\alpha,\beta_{\cI^k}}$.

More explicitly, let $U_{\alpha}$ (resp., $U_{\beta_{\cI^i}}$) denote the compression body obtained from $\Sigma\times[0,1]$ by attaching $3$-dimensional $2$-handles along the curves $\alpha_j\times\{1\}$ (resp., $(\beta_{\cI^i})_j\times\{1\}$).  Since $\cI^{i+1}$ is an immediate successor of $\cI^i$ for $i\in\{1,\ldots,k-1\}$, the compression body $U_{\beta_{\cI^{i+1}}}$ can be obtained from $U_{\beta_{\cI^i}}$ by performing surgery on a framed knot $K_i\subset U_{\beta_{\cI^i}}$ corresponding to the component of $L$ on which the multi-framings $\cI^i$ and $\cI^{i+1}$ disagree. More precisely, we have a diffeomorphism $f_i\colon U_{\beta_{\cI^i}}(K_i)\rightarrow U_{\beta_{\cI^{i+1}}}$,
where $U_{\beta_{\cI^i}}(K_i)$ is the result of performing surgery on $K_i\subset U_{\beta_{\cI^i}}$. Let $g_i\colon\partial U_{\beta_{\cI^i}}\rightarrow\partial U_{\beta_{\cI^{i+1}}}$ be the restriction of $f_i$ to $\partial U_{\beta_{\cI^i}}(K_i)=\partial U_{\beta_{\cI^i}}$, and let $W_i$ be the $4$-manifold obtained from $U_{\beta_{\cI^i}}\times [0,1]$ by attaching a $4$-dimensional $2$-handle along the framed knot $K_i\times\{1\}$. Then
\begin{align*}
\partial W_i &=(-U_{\beta_{\cI^i}}\times\{0\})\cup (\partial U_{\beta_{\cI^i}}\times[0,1])\cup (U_{\beta_{\cI^i}}(K_i)\times\{1\})\\
&\cong (-U_{\beta_{\cI^i}}\times\{0\})\cup (\partial U_{\beta_{\cI^i}}\times[0,1])\cup (U_{\beta_{\cI^{i+1}}}\times\{1\}),
\end{align*}
where, in the second line, $\partial U_{\beta_{\cI^i}}\times[0,1]$ is glued to $U_{\beta_{\cI^{i+1}}}\times\{1\}$ using the map $g_i\times id_{\{1\}}$. The boundary of $W_i$ contains a subset diffeomorphic to $Y_{\beta_{\cI^i},\beta_{\cI^{i+1}}}=-U_{\beta_{\cI^i}}\cup_{\Sigma\times\{0\}} U_{\beta_{\cI^{i+1}}}$ (namely the subset $(-U_{\beta_{\cI^i}}\times\{0\})\cup (\Sigma\times\{0\}\times[0,1])\cup (U_{\beta_{\cI^i}}(K_i)\times\{1\})$), and hence we can use $W_i$ to cap off the copy of $-Y_{\beta_{\cI^i},\beta_{\cI^{i+1}}}$ that appears in the boundary of $W=W_{\alpha,\beta_{\cI^1},\ldots,\beta_{\cI^k}}$ (see Figure~\ref{fig:CappingOff}). After doing this for each $i\in\{1,\ldots,k-1\}$, we obtain 
$$
\overline{W}:=W\cup W_1\cup\ldots\cup W_{k-1},
$$


and, by analogy to (\ref{eqn:Z}) above, we denote:
$$
\overline{Z} := \partial \overline{W} - \left(\operatorname{Int}\left(-Y_{\alpha,\beta_{\cI^1}}\right)\right) - \left(\operatorname{Int}\left(Y_{\alpha,\beta_{\cI^k}}\right)\right)
$$


Note that $\overline{Z}$ can be identified with the mapping cylinder of the map
$$
g\colon \partial Y_{\alpha, \beta_{\cI^1}}\longrightarrow \partial Y_{\alpha,\beta_{\cI^k}},
$$
defined as the identity map on $(\partial U_{\alpha})-(\Sigma\times\{0\})$ and as the composition
$$
g_{k-1}\circ\ldots\circ g_1\colon R_+(U_{\beta_{\cI^1}})\longrightarrow R_+(U_{\beta_{\cI^k}})
$$
on $R_+(U_{\beta_{\cI^1}}):=(\partial U_{\beta_{\cI^1}})-(\Sigma\times\{0\})$. 

Now recall (cf. \cite[Defn. 3.5]{GT08071432}):

\begin{definition}
A {\em relative Spin$^c$ structure} for a pair $(W,Z)$ with $W$ a $4$--manifold and $Z \subset W$ a closed, smoothly-imbedded submanifold is an equivalence class of pairs $(\xi,P)$ where $P$ is a finite collection of points in $W-Z$, and $\xi$ is an oriented $2$-plane field on $W-P$, agreeing with a fixed, oriented $2$--plane field, $\xi_0$, on $Z$. Two such pairs $(\xi,P)$ and $(\xi',P')$ are considered to be {\em equivalent} if there is a compact $1$-manifold $C\subset W-Z$ containing $P$ and $P'$ and with the property that $\xi|_{W-C}$ and $\xi'|_{W-C}$ are isotopic rel. $Z$. We denote by $Spin^c(W,Z)$ the set of all relative Spin$^c$ structures for the pair $(W,Z)$.
\end{definition}


In \cite[Sec. 3.1]{GT08071432} (following \cite{MR2113019}) it is shown how to associate to the domain, $\Psi \in \pi_2({\bf x}, {\bf \theta}, \ldots, {\bf \theta},{\bf y})$, a relative Spin$^c$ structure $\spincs(\Psi) \in Spin^c(W,Z)$.  Moreover, $\spincs(\Psi)$ extends uniquely to a Spin$^c$ structure $\overline{\spincs}(\Psi) \in Spin^c(\overline{W},\overline{Z})$ satisfying \[\overline{\spincs}(\Psi)|_{Y(\cI^1)} = \spincs({\bf x})\hskip 10 pt \mbox{and} \hskip 10pt \overline{\spincs}(\Psi)|_{Y(\cI^k)} = \spincs({\bf y}),\] where the fixed oriented $2$--plane field, $\overline{\xi}_0$, on $\overline{Z}\cong (\partial Y_{\alpha,\beta_{\cI^1}})\times [0,1]$ is defined as \[\overline{\xi}_0:=(v_0)^{\perp}\oplus 0.\]


The proof of the lemma is now based on the following two observations:
\begin{enumerate}
\item
The surfaces $S_{\cI^1}$ and $S_{\cI^k}$ are isotopic in $\overline{W}$ rel. $\overline{Z}$.

\item
The trivializations, $t_{\cI^1}=df_{\cI^1}(t)$ and $t_{\cI^k}=df_{\cI^k}(t)$, of $(v_0)^{\perp}$ on $\partial Y(\cI^1)$ and $\partial Y(\cI^k)$ extend naturally, via the map $g$, to a trivialization, $t_{\overline{Z}}$, of $\overline{\xi}_0$ on $\overline{Z}$. 

\end{enumerate}

Observation (1) implies that \[[S_{\cI^1}]= [S_{\cI^k}] \in H_2(\overline{W},\overline{Z};\Z),\] and observation (2), combined with the naturality of the relative first Chern class, imply that
$$c_1(\overline{\spincs}(\Psi),t_{\overline{Z}})\in H^2(\overline{W},\overline{Z};\Z)$$
restricts to
$$c_1(\spincs({\bf x}),t_{\cI^1}) \in H^2(Y(\cI^1),\partial Y(\cI^1);\Z)$$
and
$$c_1(\spincs({\bf y}),t_{\cI^k})\in H^2(Y(\cI^k),\partial Y(\cI^k);\Z).$$
Here, we use the following definition for the first Chern class of a relative spin$^c$ structure, $\spincs \in Spin^c(\overline{W},\overline{Z})$, with respect to a trivialization, $t_{\overline{Z}}$, of the fixed $2$--plane field, $\xi_0$, over $\overline{Z}$:

\begin{definition} Let $\spincs \in Spin^c(\overline{W},\overline{Z})$ be 
represented by a pair, $(\overline{\xi}, \overline{P})$, where $\overline{P}$ is a finite collection of points in $\overline{W}-\overline{Z}$, and $\overline{\xi}$ is an oriented $2$--plane field on $\overline{W}-\overline{P}$, agreeing with $\overline{\xi}_0$ over $\overline{Z}$.  Then \[c_1(\overline{\spincs},t_{\overline{Z}}) \in H^2(\overline{W},\overline{Z};\Z)\] is the relative Euler class of $\overline{\xi}$ with respect to $t_{\overline{Z}}$. It is the obstruction to extending $t_{\overline{Z}}$ to a trivialization of $\overline{\xi}$ over the (relative) $2$--skeleton of $(\overline{W},\overline{Z})$.
\end{definition} 


\begin{figure}
\begin{center}
\input{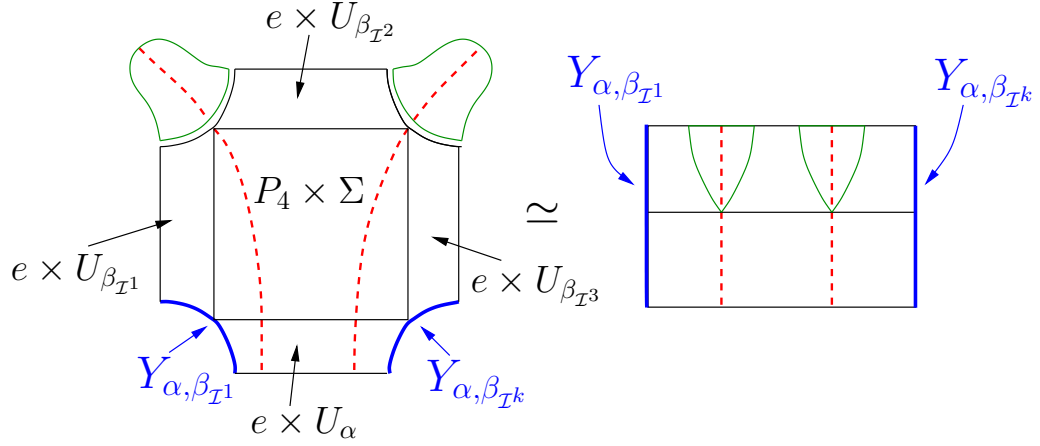}
\end{center}
\caption{Before capping off, the figure on the left represents the $4$--manifold $W_{\alpha,\beta_{\cI^1},\beta_{\cI^2},\beta_{cI^3}}$.  Capping off $Y_{\beta_{\cI^1},\beta_{\cI^2}}$ and $Y_{\beta_{\cI^2},\beta_{\cI^3}}$ with a copy of the restriction to the upper compression body of the appropriate $4$--dimensional $2$--handle attachment (pictured in green) yields the $4$--manifold cobordism associated to the sequence $\cI^1, \cI^2, \cI^3$, as illustrated on the right.  The red dotted lines represent the critical levels of the associated relative Morse decomposition.}
\label{fig:CappingOff}
\end{figure}

We conclude: 

\begin{eqnarray*}
{\bf A}_S({\bf x}) &=& \frac{1}{2}\langle \,c_1(\spincs({\bf x}),t_{\cI}),[S_{\cI^1}]\,\rangle\,\\
&=& \frac{1}{2}\,\langle\, c_1(\overline{\spincs}(\Psi),t_{\overline{Z}}), [S_{\cI^1}]\,\rangle\\
&=& \frac{1}{2}\,\langle\, c_1(\overline{\spincs}(\Psi),t_{\overline{Z}}), [S_{\cI^k}]\,\rangle\\
&=& \frac{1}{2}\,\langle\, c_1(\spincs({\bf y}),t_{\cI^k}),[S_{\cI^k}]\,\rangle\\
&=& {\bf A}_S({\bf y}),
\end{eqnarray*}

as desired.
\end{proof}

With this lemma in hand, it is now straightforward to construct a splitting of the link surgeries spectral sequence of \cite[Prop. 4.1]{GT08071432}, as follows.


\begin{proposition} \label{prop:SpecSeqSplit} Let $(Y,\Gamma)$ be a sutured manifold, $(S,\partial S) \subset (Y,\partial Y)$ an oriented surface, and $L = L_1 \cup \ldots \cup L_{\ell} \subset (Y,\Gamma)$ an oriented, framed link satisfying $L \cap S = \emptyset$.  
For each $m \in \frac{1}{2}\Z$, let \[X^{(0,1)}_m \subset X^{(0,1)}\] be the subcomplex of $X^{(0,1)}$ corresponding to those elements ${\bf x} \in CFH(\YI)$ satisfying \[{\bf A}_{\SI}({\bf x}) = m,\] where $\cI$ ranges over all elements in $\{0,1\}^\ell$.   

If $D^{(0,1)}_m$ denotes the restricted differential, then $(X^{(0,1)}_m,D^{(0,1)}_m)$ is a filtered chain complex whose associated spectral sequence has $E^1$ term \[\bigoplus_{\cI \in \{0,1\}^\ell} SFH(\,\YI\,\,;\,\,\spincs_m(\SI)\,)\] and whose $E^\infty$ term is \[SFH(\,Y(\cI_\infty)\,\,;\,\,\spincs_m(S)\,),\] where $\cI_\infty := (\infty, \ldots, \infty)$.
\end{proposition}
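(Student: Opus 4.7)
The plan is to deduce the proposition almost formally from Lemma~\ref{lem:FiltPres}, so the main conceptual obstacle is already cleared there. Writing $X^{(0,1)} = \bigoplus_{\cI \in \{0,1\}^\ell} CFH(\YI)$, I would first observe that each summand carries an ${\bf A}_{\SI}$-grading, so $X^{(0,1)}$ inherits a total $\frac{1}{2}\Z$-grading that I will also denote ${\bf A}_S$. The differential $D^{(0,1)}$ expands as a sum of maps $D_{\cI^1 < \cdots < \cI^k}$, including the diagonal case $k=1$ (the ordinary SFH differentials, which preserve Spin$^c$-structure and hence ${\bf A}_{\SI}$-grading). By Lemma~\ref{lem:FiltPres}, every such map preserves ${\bf A}_S$, so $D^{(0,1)}$ splits as $\bigoplus_m D^{(0,1)}_m$ on the direct sum decomposition $\bigoplus_m X^{(0,1)}_m$, giving the desired chain complex structure on each $X^{(0,1)}_m$.

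Next, I would check that the link surgeries filtration of \cite[Prop.~4.1]{GT08071432}, given by the sum of the coordinates of $\cI \in \{0,1\}^\ell$, is compatible with the Alexander splitting. This is essentially formal: the filtration level depends only on which summand $CFH(\YI)$ a generator lies in, not on its ${\bf A}_{\SI}$-grading, so the filtration restricts to each $X^{(0,1)}_m$, and the associated spectral sequence of $X^{(0,1)}_m$ is precisely the Alexander-graded piece of the spectral sequence of $X^{(0,1)}$.

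Identifying the pages is then a matter of restricting the known identifications to a single ${\bf A}_S$-summand. For the $E^1$ term, the filtration-preserving part of the differential on each summand $CFH(\YI)$ is the ordinary SFH differential, which preserves Spin$^c$-structure; its restriction to the ${\bf A}_{\SI} = m$ subcomplex therefore computes $SFH(\YI; \spincs_m(\SI))$, summing over $\cI \in \{0,1\}^\ell$ to give the claimed $E^1$ page. For the $E^\infty$ term, the full spectral sequence converges to $SFH(Y(\cI_\infty))$, and since $\cI_\infty = (\infty, \ldots, \infty)$ corresponds to doing no surgery, $Y(\cI_\infty)$ is canonically identified with $Y$ and $S_{\cI_\infty}$ with $S$; the restriction to the ${\bf A}_S = m$ summand then yields $SFH(Y(\cI_\infty); \spincs_m(S))$. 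The only substantive work is already done by Lemma~\ref{lem:FiltPres}; the remainder is routine bookkeeping with filtered complexes and their spectral sequences.
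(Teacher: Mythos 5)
Your architecture is right, and you correctly identify Lemma~\ref{lem:FiltPres} as the main input, but you are too quick to dismiss the $E^\infty$ identification as "routine bookkeeping."  The paper's own proof devotes its closing paragraph to exactly this point, and for good reason.  The statement that $E^\infty\bigl(X^{(0,1)}\bigr)\cong SFH(Y(\cI_\infty))$ is not a formal consequence of the filtered-complex structure on $X^{(0,1)}$; it is proved, in \cite[Prop.~4.1]{GT08071432} and \cite[Thm.~4.1]{MR2141852}, by an inductive argument modeled on the surgery exact triangle, in which one builds explicit chain maps and chain homotopies out of holomorphic polygon counts in auxiliary multi-diagrams and cancels pairs of ends of $1$--dimensional moduli spaces (\`a la \cite[Lem.~4.5]{MR2141852}).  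To restrict this identification to a single ${\bf A}_S$-summand, you must check that all of these auxiliary maps and homotopies, and the polygon-associativity degenerations that verify they are chain maps and homotopies, preserve the ${\bf A}_{\SI}$-grading.  That is not literally a corollary of Lemma~\ref{lem:FiltPres} (which concerns only the maps $D_{\cI^1<\cdots<\cI^k}$ entering the differential $D^{(0,1)}$); rather, it follows by rerunning the same relative--Spin$^c$/capping-off argument for each auxiliary map, and by noting that the cancelling pairs of moduli-space ends always lie in the same ${\bf A}_{\SI}$-graded component.  The paper spells this out; your proposal omits it.  Without it, you have only shown that $E^\infty\bigl(X^{(0,1)}_m\bigr)$ is \emph{some} summand of $SFH(Y(\cI_\infty))$, not that it is the Spin$^c$-summand $SFH(Y(\cI_\infty);\spincs_m(S))$ under the identification constructed in the inductive proof.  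The $E^1$ claim, by contrast, really is routine as you say, since there the relevant differential is just the internal SFH differential of each $CFH(\YI)$, which is already known to be Spin$^c$-preserving.

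Aside from this, your proof matches the paper's: define the Alexander splitting, invoke Lemma~\ref{lem:FiltPres} to show it is a splitting of chain complexes, observe that the $\cI$-filtration descends to each summand, and then deduce the page identifications from the proof of \cite[Prop.~4.1]{GT08071432}.  The missing content is precisely the Alexander-grading-compatibility of the maps appearing in that last step.
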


\begin{proof}
By analogy to the notation established above, let $X_m \subset X$ denote the subset of the full complex, \[X := \bigoplus_{\cI \in \{0,1,\infty\}^\ell} CFH(\YI)\] corresponding to those elements in ${\bf A}_{\SI}$--grading $m$ and let $D_m$ denote the restricted differential.  Lemma \ref{lem:FiltPres} tells us that $(X_m, D_m)$ is a subcomplex of $(X,D)$ and $\left(X^{(0,1)}_m,D^{(0,1)}_m\right)$ is a subcomplex of $\left(X^{(0,1)},D^{(0,1)}\right)$ for each $m \in \Z$.

Furthermore, recall that $\left(X^{(0,1)},D^{(0,1)}\right)$ is a filtered chain complex, where the filtration arises from a different grading on the complex.  Namely, if ${\bf x} \in CFH(\YI)$, with $\cI = (m_1, \ldots, m_\ell)$, then we say that ${\bf x}$ is in filtration-grading $\sum_{i=1}^\ell m_i$. $(X^{(0,1)},D^{(0,1)})$ is then a filtered chain complex, since $D^{(0,1)}$ is non-decreasing with respect to this grading.  This filtration-grading descends to each component, $\left(X^{(0,1)}_m, D^{(0,1)}_m\right)$, of the splitting, endowing $\left(X^{(0,1)}_m, D^{(0,1)}_m\right)$ with the structure of a filtered chain complex for each $m \in \Z$.

Now the proof follows exactly as in the proof of \cite[Prop. 4.1]{GT08071432}, following the proof of \cite[Thm. 4.1]{MR2141852}.  Lemma \ref{lem:FiltPres} tells us that the various chain maps and homotopy maps split according to ${\bf A}_{\SI}$--grading.  Furthermore, domains representing $1$--dimensional moduli spaces of holomorphic polygons, which we degenerate to prove polygon associativity (see \cite[Sec. 8.4]{MR2113019},  \cite[Sec. 4]{MR2141852}, \cite[Sec. 3.3]{GT08071432}), also respect ${\bf A}_{\SI}$--grading, by the same argument used in the proof of Lemma \ref{lem:FiltPres}.  One need only note that each pair of cancelling terms in the proof of an analogue of \cite[Lem. 4.5]{MR2141852} appears in the same ${\bf A}_{\SI}$--graded component.  The rest of the argument, applied to each ${\bf A}_{\SI}$--grading separately, is identical.
\end{proof}

\section{Surface decompositions} \label{sec:SurfDecomp}
In the previous section, we proved that the link surgeries spectral sequence associated to a framed link in a sutured manifold splits in the presence of a properly-imbedded surface whose intersection with the link is empty.  In this section, we will investigate the algebraic effect on the link surgeries spectral sequence of cutting the sutured manifold open ({\em decomposing}) along such a surface.  The main result is a generalization to multi-diagrams of Juh{\'a}sz's surface decomposition theorem (\cite[Thm. 1.3]{MR2390347}).

\subsection{Background and statement of theorem}
We recall a few definitions related to surface decompositions in sutured manifolds in preparation for stating the main theorem.

\begin{definition}\cite[Defn. 2.4]{MR2390347} \label{defn:DecSurf} A {\em decomposing surface} in a sutured manifold, $(Y,\Gamma)$, is a properly imbedded, oriented surface, $(S,\partial S) \subset (Y,\partial Y)$ such that for every component, $\lambda$, of $(\partial S) \cap \Gamma$, one of the following occurs:
  \begin{itemize}
    \item $\lambda$ is a properly imbedded non-separating arc in $\Gamma$ such that $|\lambda \cap s(\Gamma)| = 1$.
    \item $\lambda$ is a simple closed curve in an annular component, $A$, of $\Gamma$ representing the same homology class in $A$ as $s(\Gamma)$.
    \item $\lambda$ is a homotopically nontrivial curve in a torus component $T$ of $\Gamma$, and if $\delta$ is another component of $T \cap (\partial S)$, then $\lambda$ and $\delta$ represent the same homology class in $T$.
  \end{itemize}
\end{definition}

\begin{definition} Given a decomposing surface, $S$, in a sutured manifold, $Y$, the result of {\em decomposing $Y$ along $S$} is a new sutured manifold, $(Y',\Gamma')$, obtained as follows.
\begin{itemize}
  \item $Y' = Y - \mbox{Int}(N(S))$,
  \item $\Gamma' = (\Gamma \cap Y') \cup N(S_+' \cap R_-(\Gamma)) \cup N(S'_- \cap R_+(\Gamma))$,
  \item $R_+(\Gamma') = ((R_+(\Gamma) \cap Y') \cup S_+') - \mbox{Int}(\Gamma')$,
  \item $R_-(\Gamma') = ((R_-(\Gamma) \cap Y') \cup S_-') - \mbox{Int}(\Gamma')$,
\end{itemize}
where $S_+'$ (resp., $S'_-$) is the component of $\partial N(S) \cap Y'$ whose normal vector field points out of (resp., into) $Y'$. 
\end{definition}

\begin{definition}\cite[Defn. 3.7]{MR2390347} Let $(Y,\Gamma)$ be a strongly-balanced sutured manifold, $v_0$ the fixed unit vector field on $\partial Y$ as in \ref{not:v0}, $t$ a choice of trivialization of $v_0^\perp$, and $S$ a decomposing surface in $(Y,\Gamma)$.  Then \[c(S,t) := \chi(S) - \frac{I(S)}{2} - r(S,t),\] where $\chi(S)$ is the Euler characteristic of $S$, $I(S) = |\partial S \cap s(\Gamma)|$ is the geometric intersection of $\partial S$ with $s(\Gamma)$, and $r(S,t)$ is the rotation of the projection into $v_0^\perp$ of the positive unit normal field of $S$ along $\partial S$ with respect to $t$.
\end{definition}

\begin{definition}\cite{MR2390347} \label{defn:boundcoh} A curve $C \subset R(\Gamma)$ is said to be {\em boundary-coherent} if either $[C] \neq 0$ in $H_1(R(\Gamma))$ or $C$ is oriented as the boundary of its interior.
\end{definition}

Now recall (Definition \ref{defn:HDforlink}) that one obtains a filtered chain complex, $X^{(0,1)}$, from a $(0,1)$ sutured multi-diagram, $(\Sigma, \boldalpha, \boldbeta_{\{0,1\}^\ell})$, for a framed $\ell$--component link, $L$, in a balanced sutured manifold.  If $S$ is a decomposing surface geometrically disjoint from $L$, then $X^{(0,1)}$ splits according to ${\bf A}_S$ gradings: \[X^{(0,1)} = \bigoplus_{m} X^{(0,1)}_m,\] where $X^{(0,1)}_m$ is the subcomplex of $X^{(0,1)}$ corresponding to those elements ${\bf x} \in CFH(Y(\cI))$ satisfying ${\bf A}_{\SI}({\bf x}) = m$.

\begin{theorem}  \label{thm:SurfDecomp} Let $L \subset (Y,\Gamma)$ be a framed link in a strongly-balanced 
 sutured manifold, and let $S \subset (Y,\Gamma)$ be a connected decomposing surface satisfying:
\begin{enumerate}
  \item $S \cap L = \emptyset$, and
  \item for every component $V$ of $R(\Gamma)$ the closed components of the intersection $V \cap S$ consist of parallel oriented boundary-coherent curves.
\end{enumerate}

Let $(Y', \Gamma')$ be the sutured manifold obtained by decomposing along $S$ and $L' \subset (Y',\Gamma')$ the induced image of $L$.  If $X^{(0,1)}$ (resp., $\left(X^{(0,1)}\right)'$) is the filtered complex associated to a $(0,1)$ sutured multi-diagram for $L$ (resp., $L'$), then $(X^{(0,1)})'$ is filtered quasi-isomorphic to $X^{(0,1)}_k$ for $k={\frac{1}{2}c(S,t)}$.


\end{theorem}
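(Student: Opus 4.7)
The plan is to combine the splitting of Proposition~\ref{prop:SpecSeqSplit} with an extension of Juh{\'a}sz's surface decomposition theorem \cite[Thm.~1.3]{MR2390347} that applies simultaneously to every vertex of the cube $\{0,1\}^\ell$ and to the polygon maps joining them. The first step is to choose a $(0,1)$ sutured multi-diagram $(\Sigma,\boldalpha,\boldbeta_{\{0,1\}^\ell})$ for $L$ that is adapted to $S$ in Juh{\'a}sz's sense: $S$ should be exhibited as a polygonal subsurface of $\Sigma$ whose boundary arcs lie on $\boldalpha$-- and on the shared portion $\{\beta_{\ell+1},\ldots,\beta_d\}$ of each $\boldbeta_\cI$. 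This is possible because $L\cap S=\emptyset$ lets us choose a bouquet for $L$ geometrically disjoint from $S$, as in Definition~\ref{defn:CompSurf}; all surgery curves specifying the multi-framings then sit in a regular neighborhood of the bouquet and hence miss $S$, so only the $\boldalpha$ curves and the shared portion of $\boldbeta_\cI$ need to be positioned relative to $S$.

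Next, I would perform Juh{\'a}sz's surface decomposition operation on this multi-diagram, cutting $\Sigma$ along the arcs of $\partial S\cap\Sigma$ to produce a new multi-diagram $(\Sigma',\boldalpha',\boldbeta'_{\{0,1\}^\ell})$ that is a $(0,1)$ sutured multi-diagram for $L'\subset (Y',\Gamma')$. For each $\cI\in\{0,1\}^\ell$, \cite[Thm.~1.3]{MR2390347} provides a bijection between the generators of $CFH(Y'(\cI))$ and those generators of $CFH(Y(\cI))$ whose ${\bf A}_{\SI}$--grading equals $\frac{1}{2}c(\SI,t_\cI)$. Because $t_\cI$ is compatible with $t$ and $\SI$ is the natural image of $S$ in $Y(\cI)$, the quantity $c(\SI,t_\cI)$ is independent of $\cI$ and equals $c(S,t)$, so the decomposition at each vertex of the cube singles out precisely the subcomplex $X^{(0,1)}_k$ of Proposition~\ref{prop:SpecSeqSplit} with $k=\frac{1}{2}c(S,t)$.

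The heart of the proof is then to show that the polygon maps $D_{\cI^1<\cdots<\cI^m}$ intertwine with this vertex-wise bijection of generators, producing a filtered chain map $(X^{(0,1)})'\to X^{(0,1)}_k$ which is an isomorphism on every page of the associated spectral sequence. Lemma~\ref{lem:FiltPres} already guarantees that the $D_{\cI^1<\cdots<\cI^m}$ preserve ${\bf A}_S$--grading, so $D^{(0,1)}$ restricts to a differential on $X^{(0,1)}_k$. Following the degeneration strategy suggested by Lipshitz, one stretches the complex structure on $\Sigma$ along a collar of $\partial S\cap\mathrm{Int}(\Sigma)$ and argues that, in the limit, every holomorphic $(m+1)$--gon in $(\Sigma,\boldalpha,\boldbeta_{\cI^1},\ldots,\boldbeta_{\cI^m})$ connecting generators of grading $k$ degenerates to a polygon in $(\Sigma',\boldalpha',\boldbeta'_{\cI^1},\ldots,\boldbeta'_{\cI^m})$, while polygons whose domains have non-trivial multiplicity across the cutting arcs are suppressed.

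The main obstacle is this last step. Juh{\'a}sz's argument for bigons is essentially a domain-positivity argument: a non-negative disk domain connecting two generators of the correct grading must vanish on one side of each cutting arc, and hence descends to $\Sigma'$. For higher polygons the analogous statement is subtler, since polygon domains have more flexibility; one must control the ${\bf A}_S$--grading contribution of each canonical top-degree generator $\boldtheta$ at the intermediate corners and rule out non-trivial multiplicities near the cutting arcs. A careful neck-stretching analysis, combined with the relative first Chern class computation used in the proof of Lemma~\ref{lem:FiltPres} (together with a polygon-associativity check of the type in \cite[Lem.~4.5]{MR2141852} to confirm that the resulting map is indeed a chain map), should make this precise and complete the proof.
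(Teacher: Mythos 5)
Your proposal tracks the paper's proof quite closely: adapt a $(0,1)$ sutured multi-diagram to $S$ (the paper's Proposition~\ref{prop:Adaptedsurf}), perform Juh\'asz's decomposition to get a multi-diagram for $L'$, observe that Lemma~\ref{lem:FiltPres} already gives the $\mathbf{A}_S$--grading preservation needed for the splitting, and then match polygon counts on either side of the decomposition using a neck-stretching degeneration along $\partial P$, with the constant $c(S,t)$ identified via the relative Chern class computation. This is exactly the paper's strategy, and you correctly attribute the degeneration idea to Lipshitz. However, you explicitly defer the central technical step --- ``a careful neck-stretching analysis \ldots should make this precise'' --- and that step is precisely the paper's Lemma~\ref{lem:DomainSplit} together with Corollary~\ref{cor:DomainSplit} and Proposition~\ref{prop:ChainHomEquiv}: showing that, after degenerating the complex structure on $\Sigma$ along the curves $Z_i = \partial\widehat{P}$, every holomorphic polygon representing a fixed index-zero (or index-one, in the bigon case) domain between outer generators splits over $P$ into a disjoint union $\cR_A \amalg \cR_B$ with boundary purely of type $A$ or purely of type $B$, that admissibility and the pigeonhole principle upgrade this from one domain to all domains simultaneously, and that such domains lift uniquely along $p\colon\Sigma'\to\Sigma$ to give a bijection on embedded holomorphic polygons. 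As submitted, your argument is a correct proof skeleton with an honestly flagged gap at its load-bearing step.

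Two smaller inaccuracies are worth correcting. First, the quasi-polygon $P\subset\Sigma$ in a surface multi-diagram has $\partial P = A\cup B$ with $A\cap\boldbeta_j = \emptyset$ and $B\cap\boldalpha = \emptyset$; the arcs avoid the attaching curves rather than ``lie on'' them. Second, the $\boldtheta$-corners need no special $\mathbf{A}_S$ control: since $\boldtheta\in\Torus_{\beta_{\cI^i}}\cap\Torus_{\beta_{\cI^{i+1}}}$, any cylindrical end of the comb $\cR_P$ asymptotic to $\boldtheta$ automatically has boundary of type $B$ only. The outer hypothesis is used only on the $\alpha\beta$-corners $\mathbf{x}_0$, $\mathbf{x}_n$, and the positioning of basepoints adjacent to $\circlesZ$ (Figure~\ref{fig:Reebchords}) forces the east-$\infty$ Reeb chords to have uniform boundary type; together these give the clean $A/B$ splitting of $\cR_P$.
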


We have abused notation above, using $c(S,t)$ to refer to $c(\SI,t)$ for any choice of $\cI \in \{0,1\}^\ell$ (since $c(S_{\cI_1},t) = c(S_{\cI_2},t)$ for all pairs $\cI_1, \cI_2 \in \{0,1,\infty\}^\ell$).  The above theorem says that $(X^{(0,1)})'$ is filtered quasi-isomorphic to a direct summand of $X^{(0,1)}$ under the splitting induced by $S \subset (Y,\Gamma)$.  Furthermore, $c(S,t)$ identifies the Alexander grading of this direct summand.

\subsection{Surface multi-diagrams} 
To prove Theorem \ref{thm:SurfDecomp}, we will need special multi-diagrams compatible with a collection of decomposing surfaces.

\begin{definition} \label{defn:SurfMultDiag} Let \[\{(Y_j,\Gamma_j)|j=1, \ldots, n\}\] be a finite collection of balanced, sutured manifolds and $S_j \subset Y_j$ a decomposing surface for each $j \in \{1, \ldots, n\}$, in the sense of \cite[Defn. 2.4]{MR2390347}.  A balanced sutured multi-diagram adapted to the collection $\{S_j \subset Y_j|j = 1, \ldots, n\}$ is a tuple $(\Sigma, \boldalpha, \boldbeta_1, \ldots, \boldbeta_n, P)$ such that
\begin{enumerate}
  \item $(\Sigma, \boldalpha, \boldbeta_j)$ is a balanced sutured Heegaard diagram defining the sutured manifold $Y_j$ in the sense of \cite[Defn. 2.8]{MR2253454},
  \item $P \subset \Sigma$ is a closed subsurface of $\Sigma$ with $\partial P$ a graph satisfying:
    \begin{itemize}
       \item $\partial P = A \cup B$ such that $B \cap \boldalpha = \emptyset$ and $A \cap \boldbeta_j = \emptyset$ for all $j=1, \ldots, n$, and
       \item $A \cap B = P \cap \partial \Sigma$ is the set of vertices of the graph.
    \end{itemize}
  \item $(\Sigma, \boldalpha, \boldbeta_j, P)$ is adapted to $S_j$ in the sense of \cite[Defn. 4.3]{MR2390347}.  In particular, for each $j = 1, \ldots, n$, the surface obtained by smoothing the corners of \[\left(P \times \left\{\frac{1}{2}\right\}\right) \cup \left(A \times \left[\frac{1}{2},1\right]\right) \cup \left(B \times\left[0,\frac{1}{2}\right]\right) \subset Y_j\] is equivalent to $S_j$ (in the sense of \cite[Defn. 4.1]{MR2390347}).
\end{enumerate}
As in \cite[Defn. 4.3]{MR2390347}, we call a tuple $(\Sigma, \boldalpha, \boldbeta_1, \ldots, \boldbeta_n,P)$ satisfying the above properties a {\em surface multi-diagram}.
\end{definition}

\begin{definition} \label{defn:DecompSurf} Given a sutured multi-diagram $(\Sigma, \boldalpha, \boldbeta_1, \ldots, \boldbeta_n, P)$ adapted to a collection, $\{S_j \subset Y_j\}$, of decomposing surfaces, we can uniquely associate to it a tuple $(\Sigma', \boldalpha',\boldbeta_1', \ldots, \boldbeta_n', P_A, P_B, p)$, where $(\Sigma', \boldalpha', \boldbeta_1', \ldots, \boldbeta_n')$ is a balanced sutured multi-diagram such that for each $j = 1, \ldots, n$, the tuple $(\Sigma', \boldalpha', \boldbeta_j', P_A, P_B, p)$ represents the balanced sutured manifold obtained by decomposing $(Y_j,\Gamma_j)$ along $S_j$ and satisfies the properties listed in \cite[Defn. 5.1]{MR2390347}.

In particular, $P_A, P_B \subset \Sigma'$ are closed subsurfaces, $p: \Sigma' \rightarrow \Sigma$ is smooth, a local diffeomorphism on its interior, and its restrictions,
\begin{itemize} 
  \item $p: (\Sigma' - (\overline{P_A \cup P_B})) \rightarrow (\Sigma - \overline{P})$,
  \item $p:P_A \rightarrow P$, and
  \item $p:P_B \rightarrow P$
\end{itemize}
are diffeomorphisms.
\end{definition}


Suppose that $L=L_1 \amalg \ldots \amalg L_\ell$ is a framed link in a sutured manifold $(Y,\Gamma)$ and $(S,\partial S) \subset (Y,\partial Y)$ is a decomposing surface satisfying conditions (1) and (2) in the statement of Theorem \ref{thm:SurfDecomp}.  As in Section \ref{sec:LinkSurg}, for each $\cI \in \{0,1,\infty\}^\ell$ we let $\YI$ denote the sutured manifold obtained by doing $\cI$--surgery on $L$ and let $\SI$ denote the surface in $\YI$ compatible to $S \subset Y$ in the sense of Definition \ref{defn:CompSurf}.

Since surgery on $L \subset Y$ affects neither $\partial S \cap \Gamma$ nor $\partial S \cap R(\Gamma)$, $\SI$ is a decomposing surface satisfying conditions (1) and (2) in the statement of Theorem \ref{thm:SurfDecomp} for each $\cI \subset \{0,1,\infty\}^\ell$.

A decomposing surface in a sutured manifold, $(Y,\Gamma)$, determines a distinguished set of elements in Spin$^c(Y,\Gamma)$ as follows.  This set will play a large role in the proof of Theorem \ref{thm:SurfDecomp}.

\begin{definition} \label{defn:outer} \cite[Defn. 1.1]{MR2390347} A Spin$^c$ structure $\spincs \in Spin^c(Y,\Gamma)$ is said to be {\em outer} with respect to a decomposing surface $(S,\partial S) \subset (Y,\partial Y)$ if there exists a unit vector field on $Y$ representing $\spincs$ which is nowhere equal to the negative unit normal vector field on $S$.  As in \cite{MR2390347}, we denote the set of $S$--outer Spin$^c$ structures on $(Y,\Gamma)$ by $O_S$.  
\end{definition}


\begin{notation} \label{defn:Outer}
Given a compatible collection, $\{\SI \subset \YI\}$, of properly-imbedded, oriented surfaces as in Definition \ref{defn:CompSurf}, we denote by $\outerspinc$ the subset \[\coprod_{\cI} O_{\SI} \subset \coprod_{\cI} Spin^c(\YI)\] corresponding to the union over all $\SI$--outer Spin$^c$ structures.
\end{notation}



Given a link $L\subset (Y,\Gamma)$ and a decomposing surface $S \subset Y$ satisfying conditions (1) and (2) of the statement of Theorem \ref{thm:SurfDecomp}, the following two results explain how to construct a full sutured multi-diagram for $L$ that is also a surface multi-diagram adapted to the collection $\{\SI \subset \YI\,|\,\cI \in \{0,1,\infty\}^\ell\}$.

\begin{lemma}
Let $L$ be an oriented, framed link in a strongly-balanced sutured manifold $(Y,\Gamma)$ and let $S \subset (Y,\Gamma)$ a decomposing surface satisfying conditions (1) and (2) in the statement of Theorem \ref{thm:SurfDecomp}.   For each $\cI \in \{0,1,\infty\}^\ell$, let $(\YI',\Gamma')$ the sutured manifold obtained by decomposing $(\YI,\Gamma)$ along $\SI$.

Then $S$ is isotopic, in the complement of $L$, to a decomposing surface $S'$ such that each component of $\partial S'$ intersects both $R_+(\Gamma)$ and $R_-(\Gamma)$.  Furthermore, for each $\cI \in \{0,1,\infty\}^\ell$, decomposing $\YI$ along $\SI'$ also gives $(\YI',\Gamma')$, and $\outerspinc = \outerspinc'$.
\end{lemma}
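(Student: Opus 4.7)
The plan is to invoke Juh\'asz's Lemma 4.5 (\cite[Lem. 4.5]{MR2390347}), which asserts that any decomposing surface in a balanced sutured manifold is isotopic to a ``good'' one — one whose boundary components each meet both $R_+(\Gamma)$ and $R_-(\Gamma)$ — and then to verify three compatibility statements: that the isotopy can be chosen disjoint from $L$, that surface decomposition along the new surface gives the same sutured manifold in each $\YI$, and that the outer Spin$^c$ structures are unchanged.

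First I would examine Juh\'asz's construction and observe that the ambient isotopy producing $S'$ is built from a sequence of finger moves pushing arcs of $\partial S$ across $s(\Gamma)$, each supported in a small collar neighborhood $N$ of $\partial Y$. Since $L$ lies in the interior of $Y$, I may pre-isotope $L$ slightly (or equivalently shrink $N$) so that $N\cap L = \emptyset$, and so that $N$ is also disjoint from the thickened bouquet $L''$ fixed in the discussion preceding Definition \ref{defn:successor}. Performing Juh\'asz's finger moves inside this shrunk collar then yields $S'\subset Y-L''$, and the supporting ambient isotopy of $Y$ is entirely contained in $N\subset Y-L$.

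Next, because this ambient isotopy is disjoint from $L''$, it lies in the common subset $Y-L''\subset \YI$ for every $\cI\in\{0,1,\infty\}^\ell$ via the canonical inclusion $i_\cI$ preceding Definition \ref{defn:successor}. Thus it induces an ambient isotopy of $\YI$ from $\SI$ to $\SI'$ that preserves the sutured structure. Sutured manifold decomposition depends only on the isotopy class of the decomposing surface (through decomposing surfaces), so decomposing $\YI$ along $\SI'$ yields $(\YI',\Gamma')$, as claimed.

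For the final equality $\outerspinc = \outerspinc'$, I would argue that the set of $\SI$-outer Spin$^c$ structures depends only on the ambient isotopy class of $\SI$ within $\YI$. Explicitly, let $\phi_t$ denote the ambient isotopy taking $\SI$ to $\SI'$, chosen so that $(\phi_1)_*v_0 = v_0$ on $\partial\YI$ (this can be arranged since the isotopy preserves the sutured structure and may be composed with a further collar isotopy restoring $v_0$ pointwise). Given $\spincs\in O_{\SI}$ with representative $v$ nowhere equal to $-\mathbf{n}_{\SI}$, the pushed-forward field $v' := (\phi_1)_*v$ agrees with $v_0$ on $\partial\YI$, is nowhere equal to $-\mathbf{n}_{\SI'}$, and is homologous to $v$ via the path $t\mapsto (\phi_t)_*v$ of nowhere-vanishing vector fields with fixed boundary values; hence $\spincs\in O_{\SI'}$, and the reverse inclusion follows symmetrically. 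Taking the disjoint union over $\cI$ gives $\outerspinc = \outerspinc'$. I expect the main technical obstacle to be Step~3: verifying cleanly that $\phi_t$ can be arranged to fix $v_0$ on $\partial Y$ pointwise (or, equivalently, that $O_{S}$ really is an invariant of the isotopy class of $S$ as a sutured decomposing surface); this point is implicit in \cite{MR2390347} but merits a brief explicit argument.
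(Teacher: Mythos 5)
Your proof is correct and takes essentially the same approach as the paper: the paper's own proof is just two sentences, noting that Juh\'asz's isotopy in \cite[Lem.~4.5]{MR2390347} is supported in a collar of $\partial Y$ and hence misses $L\subset\operatorname{Int}(Y)$, and that ``the rest of the result then follows from \cite[Lem.~4.5]{MR2390347}.'' The one place you do more work than necessary is Step~3: the equality $O_{S}=O_{S'}$ is already part of the conclusion of Juh\'asz's Lemma~4.5, so once you know $\SI$ is carried to $\SI'$ by an ambient isotopy of $\YI$ supported in the collar, you can simply apply that lemma in each $\YI$ and take the disjoint union over $\cI$, rather than re-deriving the invariance of outer Spin$^c$ structures under isotopy by hand (though your explicit argument for it is also fine).
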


\begin{proof}
The isotopy $S \rightarrow S'$ described in the proof of \cite[Lem. 4.5]{MR2390347} takes place in a neighborhood of $\partial Y$, hence avoids $L \subset \mbox{Int}(Y)$.  The rest of the result then follows from \cite[Lem. 4.5]{MR2390347}.
\end{proof}

\begin{proposition} \label{prop:Adaptedsurf} 
Let $L$ be an oriented, framed link in a strongly-balanced sutured manifold, and let $S$ be a decomposing surface satisfying:
\begin{enumerate}
\item $S \cap L = \emptyset$, and
\item each component of $\partial S$ intersects both $R_+(\Gamma)$ and $R_-(\Gamma)$.
\end{enumerate}

Then there exists an admissible full sutured multi-diagram for $L$ that is also a surface multi-diagram adapted to the collection $\left\{\SI \subset \YI | \cI \in \{0,1,\infty\}^\ell\right\}$.
\end{proposition}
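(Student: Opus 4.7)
The plan is to combine Juhász's construction of an adapted Heegaard diagram (see \cite[Sec.~4]{MR2390347}) with the multi-diagram construction for framed links from \cite[Sec.~4]{GT08071432}. The essential geometric input is that $S \cap L = \emptyset$, which allows the region of $\Sigma$ detecting $S$ and the region detecting the link surgery to be placed in disjoint parts of the Heegaard surface. By the preceding lemma, each component of $\partial S$ meets both $R_+(\Gamma)$ and $R_-(\Gamma)$, so every component of $Y-S$ meets each of $R_{\pm}(\Gamma)$. Since $L$ is disjoint from $S$, we may choose bouquet arcs $a_1, \ldots, a_\ell$ (with $a_i$ joining a point of $L_i$ to a point of $R_+(\Gamma)$) lying entirely in $Y-S$, together with thickened regular neighborhoods $L' \subset L''$ of the bouquet $L \cup a_1 \cup \cdots \cup a_\ell$ disjoint from $S$.

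Apply Juhász's adapted Heegaard diagram construction to the sutured manifold $Y-L'$ (in which $S$ remains a decomposing surface, since Dehn surgery does not affect $\partial S$) to obtain a balanced sutured Heegaard diagram $(\Sigma, \boldalpha, \{\beta_{\ell+1}, \ldots, \beta_d\}, P)$ for $Y-L'$ adapted to $S$. Because $L''$ is disjoint from $S$, this construction can be arranged so that the portion $N_\Sigma(L'')$ of $\Sigma$ near the bouquet is disjoint from the polygonal region $P$. For each multi-framing $\cI \in \{0,1,\infty\}^\ell$, take framing curves $\gamma_1^\cI, \ldots, \gamma_\ell^\cI \subset N_\Sigma(L'')$ as in \cite[Sec.~4]{GT08071432}, and set $\boldbeta_\cI := \{\gamma_1^\cI, \ldots, \gamma_\ell^\cI, \beta_{\ell+1}, \ldots, \beta_d\}$. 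Each $(\Sigma, \boldalpha, \boldbeta_\cI)$ is then a sutured Heegaard diagram for $Y(\cI)$, and since $B \subset \partial P$ is disjoint from $N_\Sigma(L'')$, we have $B \cap \boldbeta_\cI = \emptyset$ for every $\cI$. The surface reconstruction from $(P, A, B)$ of Definition~\ref{defn:SurfMultDiag} takes place entirely in $Y-L''$, which is canonically identified with a subset of each $Y(\cI)$ via $f_\cI|_{Y-L''}$, so the reconstructed surface in $Y(\cI)$ is exactly $\SI$, and $(\Sigma, \boldalpha, \boldbeta_\cI, P)$ is adapted to $\SI$.

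Admissibility of the full sutured multi-diagram follows by the standard winding procedure on the $\alpha$-curves, performed inside a region of $\Sigma$ disjoint from both $P$ and $N_\Sigma(L'')$. This preserves the adapted polygonal structure and the framing curves, and iterating the argument as in \cite[Sec.~4]{GT08071432} yields simultaneous admissibility of every sub-multi-diagram.

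The main obstacle is arranging Juhász's construction of $\Sigma$ so that $N_\Sigma(L'')$ and $P$ are disjoint. This is achieved by starting from a Morse function on $(Y-L', \Gamma)$ compatible with the sutured structure and with the decomposing surface $S$: the critical points of such a function can be placed in any prescribed region of $Y-L'$, so we choose them to lie in a neighborhood of $S$ that avoids a neighborhood of $L''$. The resulting Heegaard surface $\Sigma$ intersects a neighborhood of $L''$ in a small planar region disjoint from $P$, after which the remaining verifications are local and combinatorial.
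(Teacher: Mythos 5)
Your overall strategy matches the paper's almost exactly: apply Juh\'asz's adapted Heegaard diagram construction \cite[Prop.~4.4]{MR2390347} to $Y - L'$ after choosing a bouquet disjoint from $S$ (which is possible precisely because each component of $\partial S$ meets both $R_\pm(\Gamma)$, hence each component of $Y-S$ meets $R_\pm(\Gamma)$), then append the framing curves $\boldbeta_\cI$ near the bouquet to get the multi-diagram, and observe that the framing curves can be kept away from the quasi-polygon $P$. The one place where you deviate from the paper, and where your argument has a gap, is the admissibility step.

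You propose achieving admissibility by winding the $\alpha$-curves in a region disjoint from $P$ and $N_\Sigma(L'')$. This cannot work in general. Admissibility of the full sutured multi-diagram $(\Sigma,\boldalpha,\boldbeta_{\{0,1,\infty\}^\ell})$ requires controlling periodic domains for \emph{all} sub-multi-diagrams, including the $(\Sigma,\boldbeta_{\cI^j},\boldbeta_{\cI^k})$-type sub-diagrams that appear in the holomorphic polygon counts. A periodic domain for such a sub-diagram has boundary lying entirely on curves in $\boldbeta_{\cI^j}\cup\boldbeta_{\cI^k}$ and is therefore completely insensitive to any isotopy of $\boldalpha$. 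Winding $\boldalpha$ alone, no matter how much, leaves these domains untouched and so cannot produce admissibility. The paper avoids this by winding the $\boldbeta_{\{0,1,\infty\}^\ell}$-curves instead, following \cite[Prop.~4.8]{MR2390347} together with the inductive multi-diagram argument of \cite[Lem.~3.12]{GT08071432}. The constraint needed to preserve the surface-multi-diagram structure is then that the winding avoid $A$ (so that $A\cap\boldbeta_\cI=\emptyset$ is maintained), which is exactly what Juh\'asz's Proposition 4.8 arranges. If you want to preserve your presentation, replace ``winding the $\alpha$-curves'' with ``winding the $\beta$-curves in the complement of $A$,'' and cite the inductive argument from \cite[Lem.~3.12]{GT08071432} for simultaneous admissibility of all sub-multi-diagrams; the rest of your proof is sound.
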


See \cite[Sec. 3.2]{GT08071432} for the definition of an admissible sutured multi-diagram.

\begin{proof} Since each component of $\partial S$ intersects both $R_\pm(\Gamma)$, it follows that the boundary of each component of $S$ intersects both $R_\pm(\Gamma)$ and there are no closed components of $\partial S$ contained entirely in $\Gamma$.  

We begin as \Juhasz does in the proof of \cite[Prop. 4.4]{MR2390347}, 
by defining a function \[f: \partial Y \cup S \rightarrow [-1,4]\] with $f^{-1}\left(\frac{3}{2}\right) \cap S$ a polygon, $P \subset S$.  Since $S \cap L = \emptyset$, $L$ is contained in the complement, $Y - N(S)$.  Furthermore, since the boundary of each component of $S$ intersects $R_+(\Gamma)$, each component of $Y-N(S)$ intersects $R_+(\Gamma)$. 

We now form a bouquet on $L$ as in \cite{GT08071432}[Sec. 4] by choosing a collection of arcs $a_1, \ldots, a_\ell$ from the link components $L_1, \ldots L_\ell$ to $R_+(\Gamma)$.  Let $L':= N(L \cup a_1 \cup \ldots \cup a_\ell)$ as in the discussion preceding Definition \ref{defn:CompSurf}, and extend the function, $f$, to a Morse function, $f_0$, on $Y - L'$.  We deform this Morse function to a self-indexing one with no index $0$ or index $3$ critical points.  In the process, we may need to move $S$ to an isotopic surface, $S'$.  

Let $\Sigma = f^{-1}\left(\frac{3}{2}\right)$ and let $\boldalpha = (\alpha_1, \ldots, \alpha_d)$ (resp., $\boldbeta_0 = (\beta_{\ell+1}, \ldots, \beta_d)$) be the intersection of gradient flow lines from the index $1$ critical points (resp., to the index $2$ critical points).  Then $\left(\Sigma, \boldalpha, \boldbeta_0\right)$ is a (non-balanced) sutured Heegaard diagram for $Y-L'$ compatible with $S'$.  Each $\mathcal{I} \in \{0,1,\infty\}^\ell$ specifies an extension of $\boldbeta_0$ to $\boldbeta_\cI$ as in \cite[Sec. 4]{GT08071432}.  $\left(\Sigma,\boldalpha,\boldbeta_{\{0,1,\infty\}^\ell}\right)$ is then a full sutured multi-diagram for $L$ that is also a surface multi-diagram adapted to the collection, $S_\cI \subset Y(\cI)$.  In particular, $P \subset \Sigma$ is a closed subsurface with $\partial P$ a graph satisfying: 
\begin{itemize}
  \item $\partial P = A \cup B$ such that $B \cap \boldalpha = \emptyset$ and $A \cap \boldbeta_{\cI} = \emptyset$ for all $\cI \in \{0,1,\infty\}^\ell$, and
  \item $A \cap B = P \cap \partial \Sigma$ is the set of vertices of the graph.\end{itemize} 

To make $\left(\Sigma, \boldalpha, \boldbeta_{\{0,1,\infty\}^\ell}\right)$ admissible, wind the curves in $\boldbeta_{\{0,1,\infty\}^\ell}$ in the complement of $A \subset P$, as in \cite[Prop. 4.8]{MR2390347}, employing the inductive argument from the proof of \cite[Lem. 3.12]{GT08071432}.
\end{proof}

Note that Proposition \ref{prop:Adaptedsurf} implies the existence of an admissible surface multi-diagram for $L$ adapted to $\{\SI \subset Y(\cI)\,|\,\cI \in \{\cI_{i_1}, \ldots, \cI_{i_k}\}\}$ for any subset, $\{\cI_{i_1}, \ldots, \cI_{i_k}\} \subset \{0,1,\infty\}^\ell.$





\subsection{Polygon counts in cylindrical sutured Floer homology} \label{sec:polycounts}

In \cite{MR2240908}, Lipshitz provides a reformulation of the Heegaard Floer homology of a closed $3$--manifold, $Y$, using counts of holomorphic curves in the symplectic manifold $\Sigma \times [0,1] \times \R$, where $\Sigma$ is a Heegaard surface for $Y$.  Sutured Floer homology admits an analogous reformulation, as briefly described in \cite[Sec. 2.1]{GT08071432}.  In this section, we discuss the cylindrical analogue of holomorphic polygon counts in sutured multi-diagrams.  This construction is implicit in \cite[Sec. 10]{MR2240908}, which describes holomorphic triangle and $4$--gon counts.

We will also make use of some of the theory describing moduli space degenerations appearing in \cite{GT08100687}.  In order to place ourselves in the correct context to directly apply relevant results, we will sometimes find it convenient to fill in each boundary component of $\Sigma$ with a disk containing a basepoint, obtaining a closed Heegaard surface, $\hatSigma$.  This poses no extra complications, since counts of holomorphic polygons whose domains miss the basepoint regions correspond naturally to holomorphic polygon counts in the original sutured setting.



\begin{definition} \label{defn:ngon} By a {\em holomorphic $n$--gon} ($n \geq 2 \in \Z$) we mean any region in $\C$ with smooth boundary and $n$ cylindrical ends, as pictured in Figure \ref{fig:Pentagon}.\footnote{Any such region is conformally equivalent to the unit disk in $\C$ with $n$ boundary punctures.}  When $n\leq 3$, all such objects are conformally equivalent,\footnote{When $n=2$, the space of conformal equivalences is homeomorphic to $\R$.}  and when $n>3$, the moduli space of (marking-preserving conformal equivalence classes of) such objects is parameterized by a space homeomorphic to $\R^{n-3}$.\footnote{This parameterization is not natural, but can be accomplished by, for example, using a conformal equivalence to identify three of the boundary punctures with three specified, fixed points on the unit circle and parameterizing by arclength from the variable punctures to a fixed puncture.}   We will use $\PP_n$ to denote an $n$--gon, considered as a topological object, and $(\PP_n,j_\PP)$ to denote an $n$--gon equipped with a specific complex structure, $j_{\PP} \in \R^{n-3}$.  

As in \cite[Fig. 7]{MR2240908}, each $\PP_n$ is equipped with a clockwise labeling of its boundary components: $e_0, \ldots, e_{n-1}$, and an associated labeling of its cylindrical ends: $v_{0,1}, \ldots, v_{n,0}$.  We denote by $\cM_{\PP_n}$ the moduli space of marked conformal equivalence classes of $n$--gons.
\end{definition}


\begin{figure}
\begin{center}
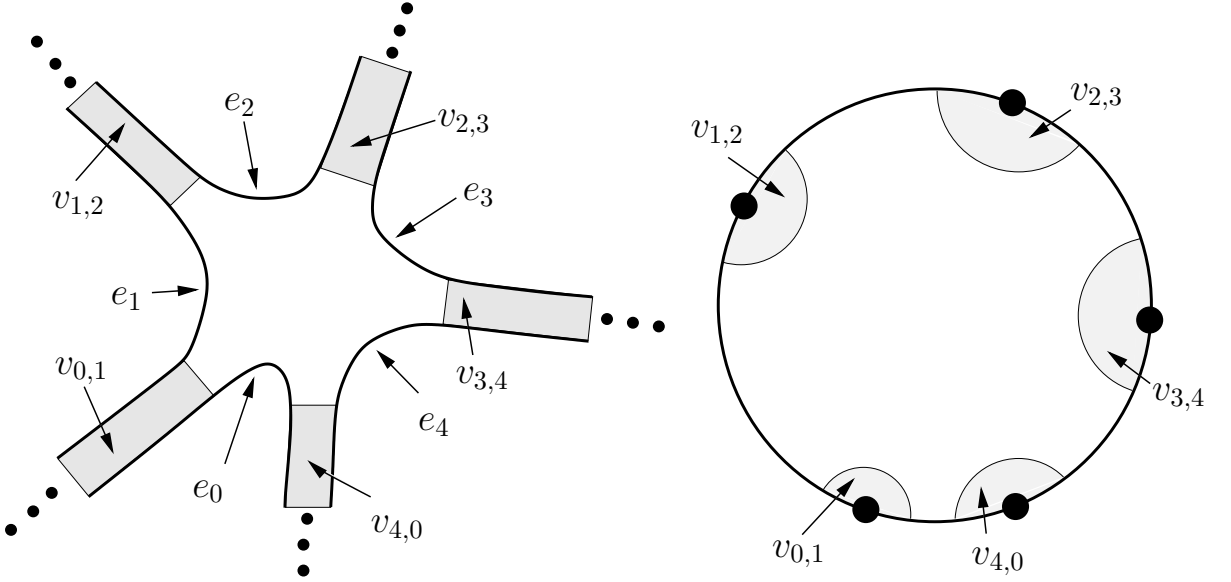
\end{center}
\caption{A holomorphic $5$--gon $\subset \C$ (left) with edges labeled $e_0, \ldots, e_4$ and cylindrical ends labeled $v_{0,1}, \ldots, v_{4,0}$, conformally equivalent to the unit disk $\subset \C$ with $5$ boundary punctures (right).}
\label{fig:Pentagon}
\end{figure}

\vskip 20pt
Now let $(\Sigma, \boldeta_0, \ldots, \boldeta_n)$ be a sutured multi-diagram, where, for each $i = 0, \ldots, n$, \[\boldeta_i = (\eta_i)_1 \amalg \ldots \amalg (\eta_i)_d\] is a disjoint union of $d$ circles ($d \geq 0$ a fixed integer) imbedded in $\Sigma$, such that the set, $\{[(\eta_i)_1], \ldots, [(\eta_i)_d]\},$ is linearly-independent in $H_1(\Sigma;\Z)$.

We form the $4$--manifold \[\Weta := \Sigma \times \PP_{n+1},\] 
equipped with the obvious projection maps, $\pi_\Sigma: \Weta \rightarrow \Sigma$ and $\pi_\PP: W \rightarrow \PP_{n+1}$, and let \[W_{\eta_i, \eta_{i+1}}= \Sigma \times v_{i,i+1}\] denote the cylindrical ends of $\Weta$.  
We now endow $\Weta$ with a split symplectic form, $\omega = \omega_{\Sigma} + \omega_{\PP}$ and choose generic almost complex structures on the cylindrical ends satisfying \cite[({\bf J1})-({\bf J5})]{MR2240908} along with a family, \[\{J_a\,|\,a \in \cM_{\PP_{n+1}}\},\] of almost complex structures on $\Weta$ satisfying \cite[({\bf J'1})-({\bf J'4})]{MR2240908} when $n+1 = 3$ and the obvious analogues of conditions $(1)$-$(7)$ of \cite[Sec. 10.6.2]{MR2240908} when $n+1 \geq 4$.\footnote{Conditions $(5)$ and $(6)$ are made recursively with reference to all possible degenerations of an $n$--gon as in \cite[Sec. 8.1.5]{MR2113019}, \cite[Fig. 3]{GT08071432}.}  We will say that a family of almost complex structures as above {\em satisfies Lipshitz's conditions.}

An intersection point, ${\bf x} \in \boldeta_i \cap \boldeta_{i+1}$, is now defined to be any $d$--tuple of distinct points, \[{\bf x} = (x_1, \ldots, x_d) \in \boldeta_i \cap \boldeta_{i+1},\] such that exactly one $x_j$ lies on each circle of $\boldeta_i$ and exactly one $x_j$ lies on each circle of $\boldeta_{i+1}$.  

For each $i = 0, \ldots, n$, let $C_{\boldeta_i}$ denote the disjoint union of cylinders, $\boldeta_i \times e_i \subset \Weta$.  

Now, given \[({\bf x}_0, \ldots, {\bf x}_n) \in (\boldeta_0 \cap \boldeta_1) \times \ldots \times (\boldeta_n \cap \boldeta_0),\] let $\pi_2({\bf x}_0, \ldots, {\bf x}_n)$ denote the set of homology classes of continuous maps \[f: (\cR, \partial \cR) \rightarrow \left(\Weta, \bigcup_{i=0}^n C_{\boldeta_i}\right)\] such that
\begin{itemize}
  \item $\cR$ is a (non-compact) surface with boundary and cylindrical ends, and
  \item the cylindrical ends of $\cR$ are asymptotic to ${\bf x}_i \times v_{i,i+1}$ in a one-to-one fashion.  In particular, there is a bijective correspondence between cylindrical ends of $\cR$ and cylinders, $(x_i)_j \times v_{i,i+1}$.
\end{itemize}

If $\cA \in \pi_2({\bf x}_0, \ldots, {\bf x}_n)$ is a homology class as above (see \cite[Sec. 10]{MR2240908}), then it corresponds naturally to a homotopy class $\phi_{\cA} \in \pi_2({\bf x}_0, \ldots, {\bf x}_n)$ (in the sense of \cite[Sec. 8.1.2]{MR2113019}).  

Furthermore, let $\left(\Weta\right)_{J_a}$ represent the $4$--manifold, $\Weta$, equipped with the complex structure, $J_a$.  Then if we denote by $\cM^\cA_{J_a}$ the moduli space of embedded holomorphic curves \[f: (\cR, \partial \cR) \rightarrow \left(\left(\Weta\right)_{J_a}, \bigcup_{i=0}^n C_{\boldeta_i}\right),\] and by \[\cM^{\cA}:= \bigcup_{a \in \cM_{\PP_{n+1}}} \cM^{\cA}_{J_a}\] the union over the moduli spaces associated to the marked equivalence classes, then there is a ``tautological correspondence''\footnote{with respect to suitable almost complex structures} (\cite{MR1978044}, \cite[Sec. 13]{MR2240908}) between  $\cM^A $ 
and $\cM(\phi_A)$, the moduli space of holomorphic $n+1$--gons in $Sym^d(\Sigma)$ representing $\phi_A$ (with respect to suitable almost complex structures).
\subsection{Proof of surface decomposition theorem for sutured multi-diagrams}

With the relevant background material in place, we turn to the proof of Theorem \ref{thm:SurfDecomp}.  The following technical lemma is crucial to allowing an identification of the appropriate moduli spaces appearing in the definition of the boundary map for the chain complexes in Theorem \ref{thm:SurfDecomp}.  The idea for the proof was suggested to us by Robert Lipshitz.

\begin{lemma} \label{lem:DomainSplit}Let $(\Sigma,\boldeta_0, \boldeta_1, \ldots, \boldeta_n,P)$ be an admissible surface multi-diagram (Definition \ref{defn:SurfMultDiag}).\footnote{In Definition \ref{defn:SurfMultDiag}, we used the notations $\boldalpha$ instead of $\boldeta_0$ and $\boldbeta_i$ instead of $\boldeta_i$, for $1\leq i\leq n$.}  Recall that $P \subset \Sigma$ is an imbedded subsurface with $\partial P = A \cup B$, such that 

\begin{tabular}{ll}
  $\boldeta_i \cap B = \emptyset$ & if $i=0$, and\\
  $\boldeta_i \cap A = \emptyset$ & if $i\neq 0$.
\end{tabular}

Let ${\bf x}_i \in {\boldeta_i} \cap {\boldeta_{i+1}}$ for each $i \in \Z_{n+1}$, and suppose that ${\bf x}_0, {\bf x}_n$ are {\em outer} in the sense of \cite[Defn. 5.3]{MR2390347}.  

Fix $\cA \in \pi_2({\bf x}_0, {\bf x}_1, \ldots, {\bf x}_n)$ such that $\mu(\phi_{\cA}) =0$ (resp., $\mu(\phi_{\cA}) = 1$) if $n>1$ (resp., if $n=1$).\footnote{We make this index restriction because the proof of Theorem \ref{thm:SurfDecomp} only requires an identification of 0-dimensional moduli spaces of polygons.  Furthermore, Lipshitz imposes transversality requirements on families of almost complex structures \cite[Sec. 10.6.2, (1)--(7)]{MR2240908} only for curves of index $\leq 1$.} 

Then there exists a family, $\{J_a\,|\,a \in \cM_{\PP_{n+1}}\}$, of almost complex structures on $\Weta$ satisfying Lipshitz's conditions 
such that for every $(\cR,\partial \cR) \in \cM^\cA$, \[\cR_P := \cR \cap \pi_\Sigma^{-1}(P)\] splits as a disjoint union \[\cR_P = \cR_A \amalg \cR_B,\] where 
$\phi_A := \pi_\Sigma(\cR_A)$ and $\phi_B:= \pi_\Sigma(\cR_B)$ satisfy
\begin{eqnarray*}
\partial\phi_A &\subset& (A \cup \boldeta_0)\\
\partial\phi_B &\subset& (B \cup_{i\neq 0} \boldeta_i).
\end{eqnarray*}
\end{lemma}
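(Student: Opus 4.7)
The strategy, suggested to us by Lipshitz, is to choose the family $\{J_a\,|\,a \in \cM_{\PP_{n+1}}\}$ of almost complex structures to be stretched along a suitable collection of curves in $\Sigma$ that separate the $A$--portion of $\partial P$ from the $B$--portion, and then invoke SFT compactness to force every $(\cR,\partial \cR) \in \cM^\cA$ to split as claimed over $\pi_\Sigma^{-1}(P)$. To begin, I would use the outerness hypothesis to localize the problem: since ${\bf x}_0$ and ${\bf x}_n$ have no coordinate in $\mbox{Int}(P)$, the cylindrical ends of $\cR$ asymptotic to ${\bf x}_0\times v_{n,0}$ and ${\bf x}_n\times v_{0,1}$ are disjoint from $\pi_\Sigma^{-1}(P)$, so every component of $\cR_P$ has its boundary contained in $\pi_\Sigma^{-1}(A\cup B) \cup \bigcup_i (\boldeta_i \cap P)$ and its cylindrical ends (if any) asymptotic to intersection points with at least one coordinate in $P$.

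Next, following Juh{\'a}sz's argument in the bigon case (\cite[Thm.~7.4]{MR2390347}), upgraded to the cylindrical polygon setting, I would apply SFT-type neck-stretching to $J_a$ along a family of curves just inside $\partial P$ that separate $A$--crossings (by $\boldeta_0$) from $B$--crossings (by $\boldeta_i$, $i\neq 0$). Since $\mu(\phi_\cA)\leq 1$, Lipshitz's transversality conditions in \cite[Sec.~10.6.2]{MR2240908}, combined with a parametrized compactness argument over $\cM_{\PP_{n+1}}$, should yield that for sufficiently stretched $J_a$, each curve in $\cM^\cA$ corresponds to a holomorphic building whose pieces lie entirely inside or entirely outside $\pi_\Sigma^{-1}(P)$, with cylindrical ends on the stretched necks matching in pairs. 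A standard gluing argument then promotes the splitting of the limit building to a splitting of $\cR_P$ itself, with each piece lying in $\pi_\Sigma^{-1}(P)$ having boundary either in $A\cup \boldeta_0$ or in $B\cup\bigcup_{i\neq 0}\boldeta_i$.

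The main obstacle is controlling the parametrized moduli $\{J_a\}$ when $n+1\geq 4$, where $\cM_{\PP_{n+1}}$ has positive dimension and the neck-stretching must be combined with the polygon-moduli degenerations of \cite[Sec.~8.1.5]{MR2113019}. Here I would invoke the associativity-type compactness results available in \cite{GT08100687} to handle simultaneous degenerations of the conformal and neck parameters, and verify that no component of a limiting configuration can meet both a $\boldeta_0$--arc crossing $A$ and a $\boldeta_i$--arc ($i\neq 0$) crossing $B$: such a component would require a path inside $P$ from $\boldeta_0$ to some $\boldeta_i$ with $i\neq 0$ avoiding $A\cup B$, which the disjointness hypotheses $\boldeta_0\cap B=\emptyset$ and $\boldeta_i\cap A=\emptyset$ preclude.
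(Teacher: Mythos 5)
Your high-level strategy is the right one, and you've correctly identified both the outerness localization and the degeneration idea (and its attribution to Lipshitz). But the details diverge from the paper's proof in ways that leave real gaps.

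First, the degeneration curves. You propose stretching along ``curves just inside $\partial P$ that separate $A$--crossings from $B$--crossings.'' The paper instead passes to the capped-off surface $\hatSigma$ (filling $\partial\Sigma$ with basepointed disks) and then shrinks the hyperbolic lengths of the closed curves $Z_1,\ldots,Z_m$ that are the images of the boundary components of $P$ itself. The choice of $\partial P$ (not something ``just inside'') is important because it is the placement of the basepoints in the regions adjacent to $\circlesZ$ that forces the east--$\infty$ Reeb chords of the limiting holomorphic comb over $\hatP$ to have boundary points of a \emph{single} type (both on $A$ or both on $B$) --- see Figure~\ref{fig:Reebchords}. Your curve family has no analogue of this basepoint control, so there is no mechanism preventing a Reeb chord from having one $A$--type and one $B$--type endpoint, which would ruin the splitting.

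Second, you do not use the branched-covering structure $\piP\colon\cR\to\PP_{n+1}$, which is the engine of the paper's argument. The degeneration of $j_\Sigma$ along the $Z_i$'s induces a degeneration of the conformal structure on $\PP_{n+1}$ with a collection of vanishing arcs $\gamma_1,\ldots,\gamma_r$. Writing $\cR_P$ as a $d$--fold branched cover of the pieces $C_i$ of $\PP_{n+1}-\bigcup_i\gamma_i$ (together with the annular pieces over the $N_+(\gamma_i)$), and observing --- via outerness at $\mathbf{x}_0,\mathbf{x}_n$ plus the basepoint control on Reeb chords --- that every component $\widetilde{C}_i\subset\cR_P$ has boundary of only one type, is how the paper actually produces the decomposition $\cR_P=\cR_A\amalg\cR_B$. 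Your proposal invokes ``SFT compactness'' and ``a standard gluing argument'' but never performs this sorting.

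Third, your final topological claim is false as stated. You assert that a component over $P$ meeting both a $\boldeta_0$--arc crossing $A$ and a $\boldeta_i$--arc ($i\neq0$) crossing $B$ ``would require a path inside $P$ from $\boldeta_0$ to some $\boldeta_i$ with $i\neq 0$ avoiding $A\cup B$, which the disjointness hypotheses $\boldeta_0\cap B=\emptyset$ and $\boldeta_i\cap A=\emptyset$ preclude.'' They preclude no such thing: those hypotheses govern intersections of the $\boldeta$--curves with the graph $\partial P=A\cup B$, not with $\mbox{Int}(P)$. Both $\boldeta_0$ and $\boldeta_i$ can, and in general do, pass freely through the interior of $P$ (indeed, the intersection points $\mathbf{x}_0\in\boldeta_0\cap\boldeta_1$ live somewhere in $\Sigma$), and $\mbox{Int}(P)$ is connected, so there is always such a path. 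What actually rules out mixed-type components is not a separation property of the $\boldeta$--curves inside $P$ but the combination of outerness (no $\mathbf{x}_0,\mathbf{x}_n$ cylindrical ends over $P$), the basepoint positioning (single-type Reeb chords), and the branched-covering decomposition over the degenerate polygon. Without these your argument does not close.
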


We summarize Lemma \ref{lem:DomainSplit} as follows: There exists a family of almost complex structures on $\Weta$ with respect to which the intersection of a fixed Maslov index $0$ holomorphic curve with $\pi_{\Sigma}^{-1}(P)$ splits as the disjoint union of a curve having boundary {\em of type $A$} and a curve having boundary {\em of type $B$}.

\begin{proof} 
Let $a_1, \ldots, a_r$ (resp., $b_1, \ldots, b_r$) be the edges in $A \subset \partial P$ (resp., $B \subset \partial P$).  
Form $\hatSigma$ by filling in the boundary components of $\Sigma$ with disks containing basepoints.\footnote{See the discussion at the beginning of Section \ref{sec:polycounts}.}
Denote by $\hatP \subset \hatSigma$ the image of $P$, and by $Z_1,\ldots,Z_m \subset \hatSigma$ the images of the boundary components of $P$, under the inclusion map.

Given an almost complex structure, $j$, on $\hatSigma$, let $\ell(Z)$ denote the length of a closed curve, $Z$, with respect to a fixed complete hyperbolic metric associated to $j$ (c.f.\cite[Chp. IV]{MR1451624}).

Choose a generic sequence, $\left\{j_{\Sigma}\right\}_{i \in \N}$, of almost complex structures on $\hatSigma$ with respect to which $\ell(Z_1), \ldots, \ell(Z_m) \rightarrow 0$ as $i \rightarrow \infty$ and a corresponding sequence, $\left\{J_a|a \in \cM_{\PP_{n+1}}\right\}_{i\in \N}$ , of families of almost complex structures on $\Weta$ satisfying Lipshitz's conditions.


Suppose $\cM^\cA \neq \emptyset$ for infinitely many choices of $(j_\Sigma)_i$,  $i \in \N$.  Then there exists a subsequence of imbedded holomorphic surfaces, \[(\cR_{i_k}, \partial \cR_{i_k}) \rightarrow W_{\eta_0, \ldots, \eta_n},\] such that
\begin{itemize}
  \item each $\cR_{i_k}$ represents the homology class $\cA$,
  \item the associated domain, $\phi(\cA) = \piSigma(\cR_{i_k})$, of each $\cR_{i_k}$ misses the basepoints, and 
  \item the sequence converges to a matched pair of {\em simple holomorphic combs} $\cR_{P}$ and $\cR_{\Sigma - P}$,  where  $\cR_{P}$ (resp., $\cR_{\Sigma - P}$) projects to $\widehat{P}$ (resp., to $\widehat{\Sigma} - \widehat{P}$) under the map $\pi_\Sigma$.  We say that $\cR_{P}$ (resp., $\cR_{\Sigma - P}$) is the holomorphic comb over $\widehat{P}$ (resp., over $\widehat{\Sigma} - \widehat{P}$).
\end{itemize}

Here we adopt the terminology from \cite[Sec. 5, 9]{GT08100687}.  By a straightforward analogue of the arguments found in \cite[Prop. 5.20]{GT08100687} and \cite[Prop. 4.2.1]{Lipshitzthesis}, matched pairs of simple holomorphic combs form the top stratum of the compactification of $\cM^{\cA}$.

Recall that a simple holomorphic comb \cite[Defn. 5.15]{GT08100687} is a pair, $(u,v)$, of holomorphic imbeddings of surfaces with decorated cylindrical ends, where $u$ maps to $\Sigma \times \PP_{n+1}$, $v$ maps to $\circlesZ \times \R \times \PP_{n+1},$ and there is a $1$ to $1$ correspondence between the Reeb decorations of $u$ at east $\infty$ and the Reeb chord decorations of $v$ at west $\infty$.    See \cite[Sec. 5.2, Conditions (1)-(5)]{GT08100687} for the appropriate conditions on the curve $v$ in $\circlesZ \times \R \times \PP_{n+1}$ in the case $n=1$.  When $n>1$, we replace condition (3) with the requirement that $\partial v \subset \partial \PP_{n+1}$.  This condition, combined with the maximum principle,  will force $\PP_{n+1}$ to be equipped with a degenerate holomorphic structure, and, under the map $\pi_\PP$, $v$ will map to a single point in (the degeneration of) $\PP_{n+1}$.

More explicitly, since $\piP: \cR_{i_k} \rightarrow \PP_{n+1}$ is holomorphic for each $i_k$, (\cite[Condition {\bf J'4}]{MR2240908}), the projections, $\piP$, of the interiors of the limiting curves, $\cR_P$ and $\cR_{\Sigma - P}$, must also be holomorphic.  Hence, any cylindrical end of $\cR_P$ (resp., $\cR_{\Sigma - P}$) must project to a cylindrical end of $\PP_{n+1}$.  In other words, $\PP_{n+1}$ is equipped with a (degenerate) holomorphic structure such that the images of the cylindrical ends of $\cR_P$ (resp., $\cR_{\Sigma - P}$) project to cylindrical ends of (the degeneration of) $\PP_{n+1}$.  Each cylindrical end can be topologically identified with some open half-neighborhood of a smoothly-imbedded arc in $\PP_{n+1}$.  We will refer to these arcs as {\em vanishing arcs} and label them $\gamma_1, \ldots, \gamma_r$.  See Figure \ref{fig:Polygon}.

\begin{figure}
\begin{center}
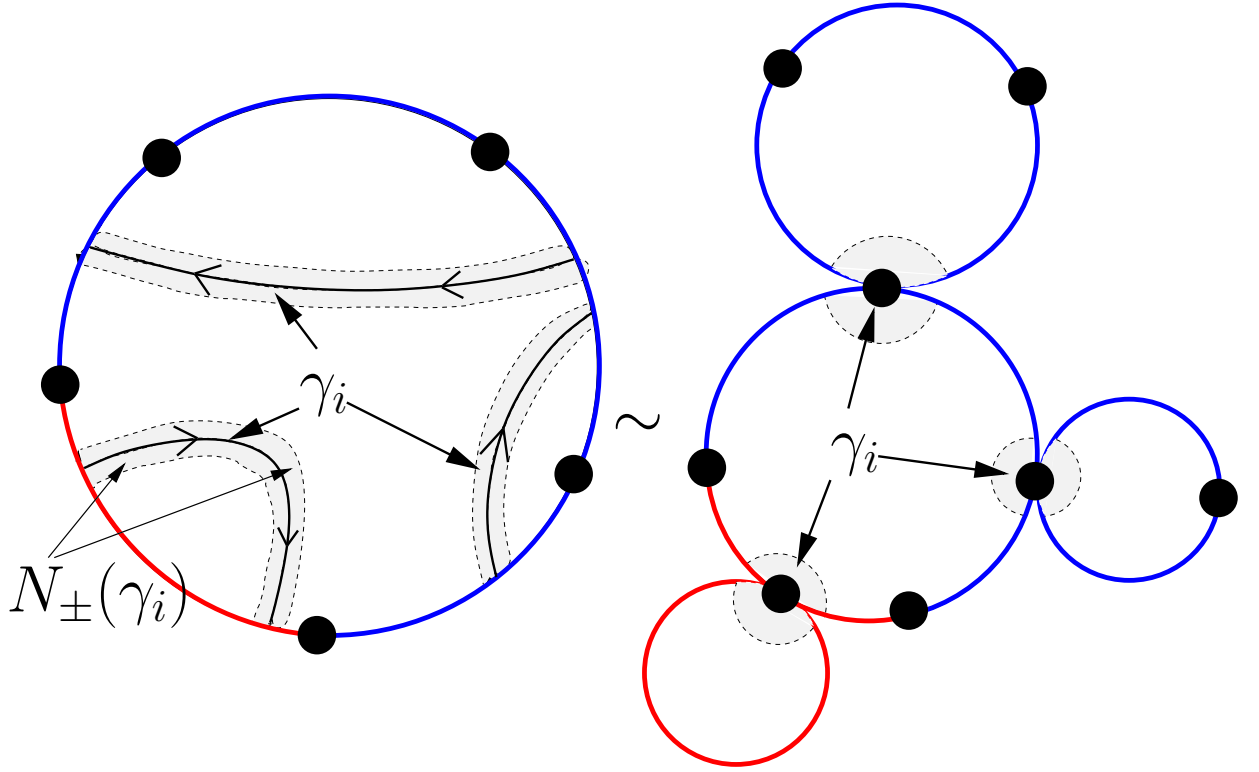
\end{center}
\caption{A polygon, $\PP_{n+1}$ (left), pictured as a unit disk with boundary punctures, along with the oriented vanishing arcs, $\{\gamma_i\}$, induced by the degeneration of the holomorphic structure on $\Sigma$ along $Z_1, \ldots, Z_m$.  The half-neighborhoods, $N_\pm(\gamma_i)$, of the smoothly imbedded arcs, $\gamma_i$, are equipped with cylindrical holomorphic structures, as indicated in the figure on the right.  (Note that we may need to rescale to deal with flat components as in, e.g., \cite[Proof of Lemma 8.2]{MR2240908}.)}
\label{fig:Polygon}
\end{figure}

Each connected component of $\PP_{n+1}- \cup_{i}\gamma_i$ is then conformally equivalent to some holomorphic $k$--gon ($1 \leq k \leq n+1$) which, upon fixing a parameterization (see Definition \ref{defn:ngon}), can be conformally identified with the standard unit disk with marked points on the boundary.\footnote{Neighborhoods of the marked points correspond to cylindrical ends under this identification.}  Each holomorphic curve, \[v \subset \circlesZ \times \R \times \PP_{n+1},\] is mapped, under $\pi_\PP$, to the image of one of the vanishing arcs under this identification.





A pair, $(u_1,v_1)$ and $(u_2,v_2)$ of holomorphic combs is {\em matched} if the asymptotics of $v_1$ at east $\infty$ match the (orientation-reverse) of the asymptotics of $v_2$ at east $\infty$ \cite[Sec. 9]{GT08100687}.    More precisely, let $\PP_{n+1}$ be equipped with a (possibly degenerate) holomorphic structure as above, with vanishing arcs $\gamma_1, \ldots, \gamma_r$.  Choose orientations on the $\gamma_i$ and label by $N_+(\gamma_i)$ (resp., $N_-(\gamma_i)$) the open half-neighborhood of $\gamma_i$ oriented compatibly (resp., incompatibly) with $\gamma_i$.\footnote{\label{footnote:orientarcs}Note that an orientation on the circles, $Z_i$, will induce an orientation on the vanishing arcs, $\gamma_i$.}  Then we say that $(u_1,v_1)$ and $(u_2,v_2)$ are a matched pair of holomorphic combs if, under the map, $\pi_\PP$, the interior of $u_i$ maps holomorphically to the interior of \[\PP_{n+1} - \cup_i \gamma_i\] in such a way that 

\begin{itemize}
  \item there is a 1 to 1 (cyclic-orientation-reversing on each boundary component) identification of the limiting Reeb chords of $v_1$ at east $\infty$ with the limiting Reeb chords of $v_2$ at east $\infty$, and
  \item if a cylindrical end of $u_1$ projects to $N_+(\gamma_i)$ under the map $\pi_\PP$, and it is identified (under the sequence of 1 to 1 correspondences described above and in the definition of a holomorphic comb) with a cylindrical end of $u_2$, then the corresponding cylindrical end of $u_2$ maps to $N_-(\gamma_i)$.
  \item If $n=1$ (hence $\PP_{n+1} \sim [0,1] \times \R$), the evaluation maps $ev: u_i \rightarrow \R$ (as described in \cite[Sec. 5.1]{GT08100687}) agree under the correspondence.
\end{itemize}

\begin{remark} Since our chosen sequence, $\left\{j_\Sigma\right\}_{i \in \N}$, of almost complex structures on $\Sigma$ is generic, and we are restricting our attention to index $0$ (resp., index $1$) domains when $n > 1$ (resp, $n=1$), we may assume, for any comb, $(u,v)$, we encounter, that the part, $v$, mapping into $\circlesZ \times \R \times \PP_{n+1}$ is {\em trivial}, in the sense of the discussion following \cite[Defn. 5.14]{GT08100687}.

Bearing this in mind, whenever we refer to a holomorphic comb, we will henceforth mean a holomorphic imbedding, $u$, into $\Sigma \times \PP_{n + 1}$ of a (possibly disconnected) surface with decorated cylindrical ends limiting on Reeb chords in $\circlesZ \times \PP_{n+1}$.
\end{remark}

Since we are interested in the topology of $\phi_{P}$, let us focus on the holomorphic comb over $\hatP$.  More specifically, for each $i_k$, let $(\cR_{P})_{i_k}$ represent $\cR_{i_k} \cap \piSigma^{-1}(\hatP)$, 
and let $\cR_P$ represent the holomorphic comb which projects, via $\pi_\Sigma$, to $\hatP$ in the limit.



Then $\cR_P$ has cylindrical ends of two types:

\begin{itemize}
  \item those asymptotic to ${\bf x}_i \times v_{i,i+1}$ (these are the analogues of Reeb chords at $\pm \infty$, in the language of \cite[Sec. 5]{GT08100687}), and
  \item those asymptotic to $\rho$, where $\rho$ is a Reeb chord in $\circlesZ \times \PP_{n+1}$.
\end{itemize}

We now make two key observations:
\begin{enumerate}
  \item Each cylindrical end of $\cR_P$ limits on a Reeb chord whose boundary points are either both of type $A$ or both of type $B$.  Recall that a type $A$ (resp., type $B$) boundary is one that lies on curves in $\boldeta_0$ (resp., $\boldeta_i$ for $i \neq 0$).  This follows because the cylindrical ends of $\cR_P$ are asymptotic to Reeb chords in either ${\bf x}_i \times v_{i,i+1}$ or in $\circlesZ \times \PP_{n+1}$.  All cylindrical ends of the first type have boundaries of type $B$ only since, by assumption, ${\bf x}_0$ and ${\bf x}_n$--the only intersection points corresponding to Reeb chords with boundaries of both type $A$ and $B$--lie in outer Spin$^c$ structures, hence do not appear among the cylindrical ends of $\cR_P$.  By the positioning of the basepoints in the regions adjacent to the circles, $Z_i$ (see Figure \ref{fig:Reebchords}), each cylindrical end of the second type will have boundary points which are either both of type $A$ or both of type $B$.
  \item Each of \[\piP: \cR_{i_k} \rightarrow \PP_{n+1}\] along with the limit \[\piP: \cR \rightarrow \PP_{n+1}\] is a $d$--fold branched covering map \cite{MR2240908,GT08100687}.  
\end{enumerate}

\begin{figure}
\begin{center}
\input{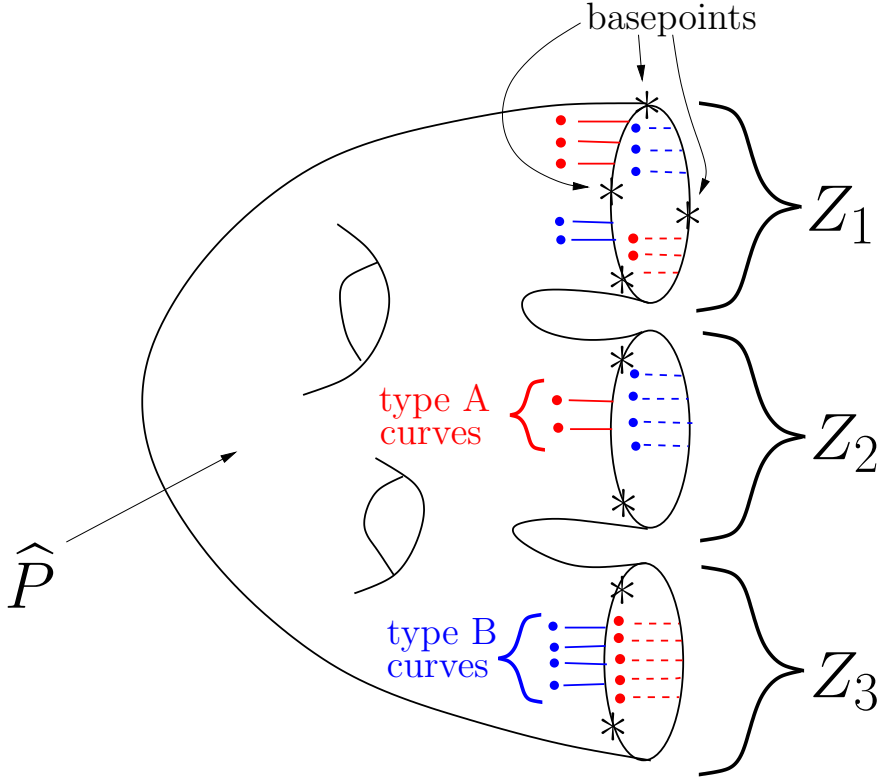}
\end{center}
\caption{A subsurface $\widehat{P} \subset \widehat{\Sigma}$ associated to a surface multi-diagram.  The positioning of the basepoints in the regions adjacent to the boundary, $\circlesZ$, of $P$ forces the east $\infty$ cylindrical ends of holomorphic combs to have boundary of either type $A$ or type $B$ but not both.}
\label{fig:Reebchords}
\end{figure}

Orient the circles, $Z_1, \ldots, Z_m$, as the boundary of $\widehat{P}$, and let $\gamma_i$ be the associated oriented vanishing arcs of $\PP_{n+1}$ with respect to the sequence $\left\{\left(j_\Sigma\right)\,|\,i\in \N\right\}$ (see Footnote \ref{footnote:orientarcs}).  Then each east $\infty$ cylindrical end of $\cR_P$ is mapped to a particular element of $\{N_+(\gamma_i)\}$ under the projection, $\pi_\PP$.  

Now let $\{\widetilde{N}_+(\gamma_i)\}$ denote the set of connected components of the $d$--fold branched covers in $\cR$ of $\{N_+(\gamma_i)\}$.  Similarly, let $\{C_i\}$ denote the set of connected components of \[\PP_{n+1} - \bigcup \overline{N_+(\gamma_i)},\] and let $\{\widetilde{C}_i\}$ denote the connected components of their $d$--fold branched covers in $\cR$. By observation $1$, above, if $\widetilde{C}_i \subset \cR_P$, then $\widetilde{C}_i$ has curves of only one type (either type $A$ or type $B$) on its boundary.  Hence, the (necessarily connected) projection of $\widetilde{C}_i$ to $\Sigma$ has the same property.

Furthermore, each $\widetilde{N}_+(\gamma_i)$ has boundary curves of only one type.  Let 
\begin{eqnarray*}
(\widetilde{C}_P)_A &=& \bigcup \left\{\widetilde{C}_i \subset \cR_P\,\,|\,\,\partial\widetilde{C}_i \mbox{ is type A}.\right\}\\
(\widetilde{N}_+(\gamma))_A &=& \bigcup \left\{\widetilde{N}_+(\gamma_i)\,\,|\,\,\partial(\widetilde{N}_+(\gamma_i)) \mbox{ is type A}.\right\}
\end{eqnarray*}

Analogously define $(\widetilde{C}_P)_B$ and $(\widetilde{N}_+(\gamma))_B$.


Then \[\left( \left(\widetilde{C}_P\right)_A \cup \left(\widetilde{N}_+(\gamma)\right)_A\right) \cap \left(\left(\widetilde{C}_P\right)_B \cup \left(\widetilde{N}_+(\gamma)\right)_B \right) = \emptyset.\]

Taking 
\begin{eqnarray*}
  \cR_A &:=& \left( \left(\widetilde{C}_P\right)_A \cup \left(\widetilde{N}_+(\gamma)\right)_A\right), \mbox{ and}\\
  \cR_B &:=& \left(\left(\widetilde{C}_P\right)_B \cup \left(\widetilde{N}_+(\gamma)\right)_B \right),
\end{eqnarray*} 

we therefore can realize $\cR_P$ as a disjoint union: \[\cR_P = \cR_A \amalg \cR_B,\] such that the projection, $\pi_{\Sigma}(\cR_P) = \phi_{P}$, can be decomposed as \[\phi_{P} = \phi_A + \phi_B,\] where $\phi_A$ (resp., $\phi_B$) has boundary of type $A$ (resp., type $B$), as desired.

\end{proof}

Lemma \ref{lem:DomainSplit} implies that we can choose a suitable almost complex structure with respect to which a holomorphic polygon represented by any {\em fixed} domain, $\mathcal{A}$ (representing a topological polygon between generators in outer spin$^c$ structures), must split as claimed.  If the surface multi-diagram is admissible, we can find a suitable almost complex structure with respect to which holomorphic polygons for {\em all} such domains split as claimed:

\begin{corollary} \label{cor:DomainSplit} Let $(\Sigma, \boldeta_0, \ldots, \boldeta_n,P)$ be an admissible surface multi-diagram.  Then there exists some generic almost complex structure, $j_\Sigma$, on $\Sigma$ such that for {\em every} choice of the following data:

\begin{itemize}
  \item a non-empty ordered subset, \[\{i_0=0, i_1, \ldots, i_k\} \subseteq \{0, \ldots, n\}\] 
  \item an associated family, $\{J_a|a \in \cM_{\PP_{k+1}}\}$, of almost complex structures on $\Wetak$ satisfying Lipshitz's conditions,
  \item a $(k+1)$--tuple $({\bf x}_0, \ldots, {\bf x}_k)$ with ${\bf x}_j \in \mathbb{T}_{\eta_{i_j}} \cap \mathbb{T}_{\eta_{i_{j+1}}}$ and ${\bf x}_0, {\bf x}_k$ outer, and 
  \item a domain $\cA \in \pi_2({\bf x}_0, \ldots, {\bf x}_k)$,
\end{itemize}


each surface $(\cR,\partial \cR) \in \cM^\cA$ satisfies the property that \[\cR_P := \cR \cap \pi_\Sigma^{-1}(P)\] splits as a disjoint union \[\cR_P = \cR_A \amalg \cR_B,\] where 
$\phi_A := \pi_\Sigma(\cR_A)$ (resp., $\phi_B:= \pi_\Sigma(\cR_B)$) has boundary of type $A$ only (resp., type $B$ only).
\end{corollary}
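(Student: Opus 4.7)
The corollary upgrades Lemma \ref{lem:DomainSplit} from a statement depending on a single fixed domain $\cA$ to one uniform over all ordered subsets $\{i_0, i_1, \ldots, i_k\}$, all outer $(k+1)$--tuples, and all such domains. The plan is to combine the admissibility hypothesis with the sequence construction already employed in the proof of Lemma \ref{lem:DomainSplit}, replacing the \emph{per-domain} choice of $j_{\Sigma}$ by a simultaneous one.

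First I would enumerate the total amount of data that can possibly produce a non-empty moduli space in the relevant index range $\mu(\phi_{\cA})\in\{0,1\}$. The set of ordered subsets $\{i_0=0, i_1, \ldots, i_k\}\subseteq\{0,\ldots,n\}$ is finite ($\leq 2^{n+1}$); for each, the set of $(k+1)$--tuples $({\bf x}_0, \ldots, {\bf x}_k)$ of intersection points (in particular, of outer ones) is finite, since each $\Torus_{\eta_{i_j}}\cap\Torus_{\eta_{i_{j+1}}}$ is finite; and, crucially, by the admissibility of the surface multi-diagram $(\Sigma,\boldeta_0,\ldots,\boldeta_n,P)$ (cf.\ \cite[Sec. 3.2]{GT08071432}), for each such tuple there are only finitely many positive homology classes $\cA\in\pi_2({\bf x}_0, \ldots, {\bf x}_k)$ with $\mu(\phi_{\cA})\in\{0,1\}$. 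Hence only finitely many triples (subset, outer tuple, $\cA$) need to be controlled.

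Next I would invoke Lemma \ref{lem:DomainSplit} for each such triple. Its proof produces a sequence $\{j_{\Sigma}^{(i)}\}_{i\in\N}$ of almost complex structures on $\hatSigma$ with $\ell(Z_1),\ldots,\ell(Z_m)\to 0$, along with an accompanying sequence of families $\{J_a\}_i$ on $\Wetak$ satisfying Lipshitz's conditions, having the property that for all $i$ sufficiently large any holomorphic curve representing $\cA$ with respect to any family compatible with $j_{\Sigma}^{(i)}$ exhibits the desired $\cR_P=\cR_A\amalg\cR_B$ splitting. Taking the maximum, $N$, of the finitely many thresholds $N(\text{triple})$ so obtained and setting $j_{\Sigma}:=j_{\Sigma}^{(N)}$ produces a single generic almost complex structure on $\Sigma$ for which the splitting holds across all data simultaneously. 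Any family $\{J_a\,|\,a\in\cM_{\PP_{k+1}}\}$ on $\Wetak$ satisfying Lipshitz's conditions and compatible with this fixed $j_\Sigma$ then inherits the splitting property.

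The main obstacle is ensuring that the splitting, established in Lemma \ref{lem:DomainSplit} as a property of the limiting matched pair of holomorphic combs, actually holds for the pre-limit curves $\cR_{i_k}$ at a specific $j_\Sigma^{(i)}$ sufficiently far along the sequence. This is where one uses the fact that the decomposition $\cR_P=\cR_A\amalg\cR_B$ records discrete combinatorial information about the imbedded curve (the number of components of $\cR\cap\piSigma^{-1}(\hatP)$ and the boundary type of each), and such information is locally constant along a convergent sequence of imbedded holomorphic curves with bounded energy and fixed index. Together with Gromov compactness and the positioning of basepoints adjacent to each $Z_i$ (see Figure \ref{fig:Reebchords}), which excludes cylindrical ends with mixed-type boundaries, this forces the combinatorics of $\cR_{i_k}$ to agree with those of its limit for $i$ large, completing the argument.
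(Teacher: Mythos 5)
Your approach is fundamentally the same as the paper's: both exploit admissibility to obtain finiteness, both reuse the degenerating sequence $\{(j_\Sigma)_i\}_{i\in\N}$ from the proof of Lemma~\ref{lem:DomainSplit}, and both conclude by choosing $i$ suitably large. The differences lie in exactly how the finiteness is leveraged, and this is where your proposal has a soft spot. Your strategy is to extract a \emph{threshold} $N(\text{triple})$ for each of the finitely many triples (subset, outer tuple, domain) and take the maximum; to justify the existence of such thresholds you invoke the claim that the decomposition $\cR_P=\cR_A\amalg\cR_B$ ``records discrete combinatorial information \ldots locally constant along a convergent sequence.'' This is not actually true in general: the number of connected components of $\cR\cap\piSigma^{-1}(\hatP)$ is \emph{not} stable under Gromov convergence (components can split, merge, or bubble off), so ``eventually constant'' needs a genuine argument, not a general principle. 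The paper sidesteps this entirely by pitching the pigeonhole at the \emph{domain} level rather than the curve level: it enumerates the finitely many \emph{bad} domains $\phi_1,\ldots,\phi_m$---those for which the purely combinatorial decomposition $\phi_l\cap P=\phi_A+\phi_B$ (with $\partial\phi_A$ of type $A$ only, $\partial\phi_B$ of type $B$ only) is impossible---and notes that if one such $\phi_{l_0}$ supported a nonempty moduli space along a subsequence of the $(j_\Sigma)_i$, the limiting matched comb from Lemma~\ref{lem:DomainSplit} would split, forcing $\phi_{l_0}\cap P$ to decompose after all, a contradiction. Since the domain $\phi_{l_0}$ is a discrete object unchanged along the sequence, this argument requires no ``stability of curve topology'' claim. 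I'd recommend you recast your final paragraph in this form: replace the per-triple threshold extraction plus local-constancy by the coarser enumeration of bad domains and a single pigeonhole/contradiction, which is both cleaner and avoids the unjustified step.
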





\begin{proof} Since $(\Sigma, \boldeta_0, \ldots, \boldeta_n)$ is admissible, there are a finite number of domains that can represent holomorphic $(k+1)$--gons, for any $1 \leq k \leq n$, since there are a finite number of domains with non-negative coefficients.  Of these domains, let $\phi_1, \ldots, \phi_m$ be those for which $\phi_P := \phi_i \cap P$ does not split as $\phi_A + \phi_B$.  Now choose a sequence $\left\{j_\Sigma\right\}_{i \in \N}$, of almost complex structures on $\Sigma$ as in the proof of Lemma \ref{lem:DomainSplit} along with a choice of family, \[\{J_a | a \in \cM_{\PP_{k+1}}\},\] of almost complex structures on $\Wetak$ for each $1 \leq k \leq n$.  If, for each $(j_\Sigma)_i$, there exists some $\phi_l, \mbox{ with } l \in \{1, \ldots, m\},$ such that  $\cM(\phi_l) \neq \emptyset$, then, by the pigeonhole principle, there exists some fixed $l_0 \in \{1,\ldots, m\}$ and some subsequence $\left\{(j_\Sigma)_{i_{c}}\right\}_{c\in \N}$ such that $\cM(\phi_{l_0}) \neq \emptyset$ for all $(j_\Sigma)_{i_{k}}$, contradicting Lemma \ref{lem:DomainSplit}.

\end{proof}

\begin{proposition} \label{prop:ChainHomEquiv} Let $(\Sigma, \boldalpha, \boldbeta_{\{0,1\}^\ell}, P)$ be a $(0,1)$ sutured multi-diagram for $L \subset Y$ adapted to a collection, \[\{\SI \subset \YI | \cI \in \{0,1\}^\ell\},\] of decomposing surfaces $\SI$ for $\YI$, and let $L'$ be the image of $L$ in $(Y',\Gamma')$, the sutured manifold obtained by decomposing along $S$.  

Let $(\Sigma', \boldalpha', \boldbeta_{\{0,1\}^\ell}', P_A, P_B, p)$ be the tuple obtained from this data as in Definition \ref{defn:DecompSurf}.  Then $(\Sigma', \boldalpha', \boldbeta_{\{0,1\}^\ell}')$ is a $(0,1)$ sutured multi-diagram for $L' \subset Y'$.

Furthermore, \[X' \cong \bigoplus_{\spincs \in \outerspinc} (X;\spincs),\] where  ``$\cong$'' denotes a filtered quasi-isomorphism.

\end{proposition}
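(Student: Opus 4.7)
The plan is to extend Juh{\'a}sz's single-diagram surface decomposition theorem \cite[Thm. 1.3]{MR2390347} to multi-diagrams, using Corollary \ref{cor:DomainSplit} as the crucial analytic input.

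For part (1), observe that for each fixed $\cI \in \{0,1\}^\ell$, the tuple $(\Sigma',\boldalpha',\boldbeta'_\cI,P_A,P_B,p)$ is precisely the output of Juh{\'a}sz's construction \cite[Defn. 5.1]{MR2390347} applied to the single-diagram $(\Sigma,\boldalpha,\boldbeta_\cI,P)$, so $(\Sigma',\boldalpha',\boldbeta'_\cI)$ is a sutured Heegaard diagram for the manifold obtained by decomposing $\YI$ along $\SI$. Since $L\cap S=\emptyset$, the $\cI$-surgery along $L$ commutes with the decomposition along $S$, so this decomposed manifold coincides with the sutured manifold obtained from $(Y',\Gamma')$ by $\cI$-surgery on $L'$. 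The $\boldalpha'$ curves are common to all $\cI$, and the $\boldbeta'_\cI$ are obtained uniformly as the $p$-lifts of $\boldbeta_\cI$, so the assembled data is a bona fide $(0,1)$ sutured multi-diagram for $L'\subset Y'$.

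For part (2), I would construct the filtered identification at the level of generators and then match differentials. Juh{\'a}sz's \cite[Prop. 5.4]{MR2390347} gives, for each $\cI$, a bijection between $\Torus_{\alpha'}\cap\Torus_{\beta'_\cI}$ and $\{{\bf x}\in\Torus_{\alpha}\cap\Torus_{\beta_\cI}\;|\;\spincs({\bf x})\in O_{\SI}\}$; summing over $\cI\in\{0,1\}^\ell$ identifies the generating sets of $X'$ and $\bigoplus_{\spincs\in\outerspinc}(X;\spincs)$. Using Corollary \ref{cor:DomainSplit}, fix a single $j_\Sigma$ simultaneously enforcing the $\cR_P=\cR_A\amalg\cR_B$ splitting for every increasing sequence $\cI^0<\cdots<\cI^k$ in $\{0,1\}^\ell$, and pull back via the branched cover $p$ to $j_{\Sigma'}=p^\ast j_\Sigma$, together with compatible $4$-manifold families on each $W_{\alpha,\beta_{\cI^0},\ldots,\beta_{\cI^k}}$ and its decomposed counterpart. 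For outer inputs, the splitting lets each holomorphic polygon lift uniquely and bijectively through $p$ to a polygon in the decomposed multi-diagram (the $\phi_A$-part lifting to $P_A$, the $\phi_B$-part to $P_B$), and conversely each polygon downstairs pushes down to such a split polygon. This yields a bijection of the relevant moduli spaces, so the polygon counts defining $D$ and $D'$ agree under the identification. In particular, the outer sum is seen to be a genuine subcomplex, and the filtration grading $|\cI|=\sum m_i$ is preserved on the nose because the identification is the identity in the $\cI$-index.

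The hard part will be arranging all of the almost complex structure data coherently: Corollary \ref{cor:DomainSplit} furnishes a single $j_\Sigma$ valid for every subdiagram, but one must also choose the $4$-manifold families $\{J_a\,|\,a\in\cM_{\PP_{k+1}}\}$ on each $W_{\alpha,\beta_{\cI^0},\ldots,\beta_{\cI^k}}$ so that Lipshitz's conditions \cite[({\bf J'1})--({\bf J'4})]{MR2240908} hold simultaneously upstairs and downstairs, the branched cover $p$ intertwines them, and no spurious polygons can appear after decomposition. This is accomplished by performing the neck-stretching along $Z_1,\ldots,Z_m\subset\Sigma$ uniformly across all subdiagrams, following the pinching argument of \cite[Sec.~6]{MR2390347}, and making the recursive choices of \cite[Sec.~10.6.2]{MR2240908} consistently in the two compatible setups. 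When a single global choice is not available, one instead produces a filtered chain map inducing isomorphisms on $E^1$ for each $\cI$ by the usual continuation-map/polygon-counting argument of Remark \ref{rmk:SpecSeqInv}, yielding filtered quasi-isomorphism and justifying the use of ``$\cong$'' rather than literal equality in the statement.
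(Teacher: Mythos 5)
Your proposal follows essentially the same route as the paper's own proof: you identify that \cite[Defn.~5.1, Prop.~5.2]{MR2390347} handle the diagrammatic claim, use \cite[Lem.~5.4]{MR2390347} (which you cite as Prop.~5.4) for the generator bijection via the ${\bf x}\cap P=\emptyset$ characterization of outer generators, invoke Corollary~\ref{cor:DomainSplit} to fix a single $j_\Sigma$ for which all relevant domains split over $P$, and pull back through $p\times\mathbb{I}$ to get a compatible family of almost complex structures on the decomposed multi-diagram. The coherence worry you flag at the end is not actually a problem: the complex $X$ is already built from a separate family $\{J_a\}$ on each sub-multi-diagram $W_{\alpha,\beta_{\cI^0},\ldots,\beta_{\cI^k}}$, and one simply takes the decomposed choices to be $p$-pullbacks of these; the pullbacks inherit Lipshitz's conditions automatically, so the continuation-map fallback is unnecessary.

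The genuine gap in your argument is that you do not verify that the claimed bijection of moduli spaces is a bijection of \emph{embedded} holomorphic curves. Lifting an embedded $(\cR,\partial\cR)\to\Sigma\times\PP_{k+1}$ through $p$ gives an embedding upstairs because $p$ is a local diffeomorphism on the interior; but the converse — that pushing an embedded $(\cR',\partial\cR')\subset\Sigma'\times\PP_{k+1}$ down to $\Sigma\times\PP_{k+1}$ again yields an embedding — is not automatic, since $p$ identifies $P_A$ with $P$ and $P_B$ with $P$, so distinct points of $\cR'$ lying over $P_A$ and $P_B$ could in principle map to the same point of $\Sigma\times\PP_{k+1}$. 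Your phrase ``no spurious polygons'' gestures at a related concern but does not address this. The paper rules it out by observing that any such would-be double point has one preimage in $\pi_{\Sigma}^{-1}(\phi_A')\subset P_A\times\PP_{k+1}$ and the other in $\pi_{\Sigma}^{-1}(\phi_B')\subset P_B\times\PP_{k+1}$, and then showing that under $\pi_{\PP}$ these must land in different connected components of $\PP_{k+1}-(\PP_{k+1})_{\Sigma-P}$ (one bounded only by type-$A$ arcs, the other only by type-$B$ arcs), so they cannot have the same $\PP_{k+1}$ coordinate. Thus the domain splitting from Corollary~\ref{cor:DomainSplit} is doing double duty: it not only permits the lift but also produces the separation in the polygon factor that guarantees embeddedness of the pushdown. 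Your proof should include this step.
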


In the above, $X$ (resp., $X'$) is the filtered chain complex obtained from the $(0,1)$ sutured multi-diagram, $\left(\Sigma, \boldalpha, \boldbeta_{\{0,1\}^\ell}\right)$ $\left( \mbox{resp.,} \left(\Sigma', \boldalpha', \boldbeta'_{\{0,1\}^\ell}\right)\right)$, as in the discussion following Definition \ref{defn:HDforlink}, and $\bigoplus_{\spincs \in \outerspinc} (X;\spincs)$ refers to the complex whose generators lie in $\outerspinc$, the outer spin$^c$ structures, with restricted differential.

\begin{proof}
$(\Sigma',\boldalpha',\boldbeta'_{\cI})$ is a sutured Heegaard diagram for $Y'(\cI)$ for each $\cI$, by \cite[Prop. 5.2]{MR2390347}, since $(\Sigma,\boldalpha, \boldbeta_{\cI}, P)$ is adapted to $\SI \subset \YI$ for each $\cI \in \{0,1\}^\ell$.  Hence, \[\left(\Sigma', \boldalpha', \boldbeta'_{\{0,1\}^\ell}\right)\] is a $(0,1)$ sutured multi-diagram for $L' \subset Y'$.

In order to prove that \[X' \cong \bigoplus_{\spincs \in O_{\amalg_\cI}} (X;\spincs),\] we first note that, since $(\Sigma,\boldalpha,\boldbeta_{\cI},P)$ is adapted to $\SI \subset \YI$ for each $\cI$, and the generators ${\bf x} \in \Torus_{\alpha} \cap \Torus_{\beta_\cI}$ lying in $O_{\SI}$ are precisely those satisfying ${\bf x} \cap P = \emptyset$, by \cite[Lem. 5.4]{MR2390347}.  Furthermore, there is a set-wise bijection \[\{{\bf x} \in \Torus_\alpha \cap \Torus_{\beta_\cI}| {\bf x} \cap P = \emptyset\} \leftrightarrow \Torus_{\alpha'} \cap \Torus_{\beta'_\cI}\] for each $\cI$, hence a bijection between the generators of $\bigoplus_{\spincs \in O_{\amalg_\cI}} (X;\spincs)$ and of $X'$.

To prove that the chain complexes are filtered quasi-isomorphic, it will suffice to show that the differentials on the complexes $X'$ and $\bigoplus_{\spincs \in O_{\amalg_\cI}} (X;\spincs)$ agree for a suitable generic almost complex structure on $\Sigma$ (inducing a generic almost complex structure on $\Sigma'$).

To this end, let \[(\cI_{i_1}, \ldots, \cI_{i_k}) \subseteq \{0,1\}^\ell\] be any ordered subset as in Section \ref{subsec:Agradings}.  
We will construct a one-to-one correspondence between embedded holomorphic surfaces representing holomorphic $k$--gons in \[W_{\alpha,\beta_{\cI_1}, \ldots, \beta_{\cI_{k}}} := \Sigma \times \PP_{k + 1}\] and \[W_{\alpha', \beta'_{\cI_1}, \ldots, \beta'_{\cI_{k}}}:= \Sigma' \times \PP_{k + 1}.\]

First, note that Corollary \ref{cor:DomainSplit} ensures the existence of a generic almost complex structure, $j_\Sigma$, on $\Sigma$ with respect to which the intersection, $\phi_{P} := \phi \cap P$, of any given candidate domain, $\phi$, for any holomorphic $k$--gon associated to any ordered subset \[(\cI_{i_1}, \ldots, \cI_{i_k}) \subseteq \{0,1\}^\ell\] as in Section \ref{subsec:Agradings} can be decomposed as \[\phi_P = \phi_A + \phi_B,\] where $\phi_A$ (resp., $\phi_B$) has boundary curves of type $A$ only (resp., of type $B$ only).

Fix such an almost complex structure, $j_\Sigma$, on $\Sigma$, and choose a family, $\{J_a|a \in \cM_{\PP_{k+1}}\}$, on $\Sigma \times \PP_{k+1}$ satisfying Lipshitz's conditions for each $k= 1, \ldots, n$.  
Now let \[(\cR, \partial \cR) \rightarrow \Sigma \times \PP_{k+1}\] be a holomorphic embedding representing a holomorphic $(k+1)$--gon, and let $\phi = \pi_\Sigma(\cR) \subset \Sigma$ be its associated domain.

By Lemma \ref{lem:DomainSplit}, $\phi_P$ splits as $\phi_A + \phi_B$, so we can lift $\phi$ uniquely to a domain, $\phi' \subset \Sigma'$, satisfying $p(\phi') = \phi$ under the covering projection, $p: \Sigma' \rightarrow \Sigma.$  We construct $\phi'$ by lifting $\phi_A$ to $\phi_A' \subset P_A$ and $\phi_B' \subset P_B$ as in Figure \ref{fig:LiftDomain} and taking \[\phi' = p^{-1}\left(\phi_{\Sigma - P}\right) + \phi_A' + \phi_B'.\] 

\begin{figure}
\begin{center}
\input{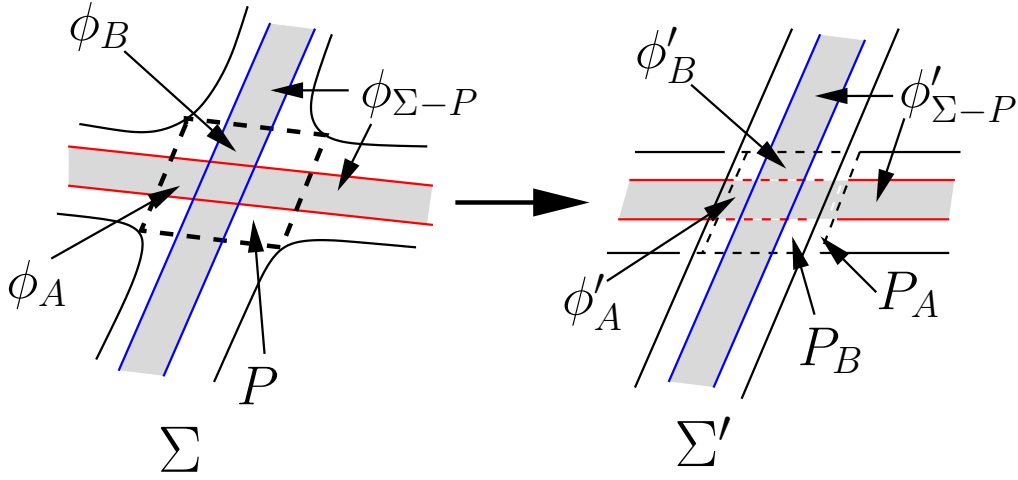}
\end{center}
\caption{Lifting a domain, $\phi \subset \Sigma$, satisfying $\phi_P = \phi_A + \phi_B$ to a domain, $\phi' \subset \Sigma'$, by lifting $\phi_{\Sigma - P}$ to $\phi'_{\Sigma-P}$ and $\phi_A \subset P$ (resp., $\phi_B \subset P$) to $\phi_A' \subset P_A$ (resp., $\phi_B' \subset P_B$).}
\label{fig:LiftDomain}
\end{figure}

Now, pull back a family of almost complex structures, $\{J_a'\}$, on $\Sigma' \times \PP_n$ along the map \[(p \times \mathbb{I}): (\Sigma' \times \PP_{k+1}) \rightarrow (\Sigma \times \PP_{k+1}).\]  Note that $\{J_a'\}$ satisfies Lipshitz's conditions, since $\{J_a\}$ does.  We then construct an imbedded surface \[(\cR',\partial \cR') \rightarrow \Sigma' \times \PP_{k+1}\] by choosing the unique lift, \[ (\cR', \partial \cR'):=(p^{-1} \times \mathbb{I})(\cR,\partial \cR),\] satisfying the property that $\pi_{\Sigma'}(\cR') = \phi'$.  By construction, $(\cR', \partial \cR') \rightarrow \Sigma' \times \PP_{k+1}$ represents a holomorphic $(k+1)$--gon with domain, $\phi'$.

Conversely, suppose that $(\cR', \partial \cR')$ is an imbedded holomorphic surface with respect to an almost complex structure in the family, \{$J'_a\}$ constructed above.  Then \[(\cR,\partial \cR) := (p \times \mathbb{I})(\cR',\partial \cR')\] is holomorphic with respect to the the corresponding almost complex structure in the family, $\{J_a\}$.  

To see that the holomorphic map $i: \cR \rightarrow \Sigma \times \PP_{k+1}$ is also an imbedding, suppose, aiming for a contradiction, that it is not.  Then there exist points $a \neq b \in \cR$ such that $i(a) = i(b)$.  In particular, $i(a)= i(b)$ project to the same point of $\Sigma$ under the map $\pi_\Sigma$ and to the same point of $\PP_{k+1}$ under the map $\pi_{\PP}$.  

To see that this is impossible, note that since $\cR'$ is imbedded in $\Sigma' \times \PP_{k+1}$, $i(a)$ and $i(b)$ must be the images under the map $p \times \mathbb{I}$ of points $a' \in \pi_{\Sigma}^{-1}(\phi_A') \subset P_A \times \PP_{k+1}$ and $b' \in \pi^{-1}(\phi_B') \subset P_B \times \PP_{k+1}$, else the double point of $\cR$ will lift to a double point of $\cR'$.

Now let $\cR_P$ (resp., $\cR_{\Sigma - P}$) represent the preimage of $P$ (resp., $\Sigma - P$) under the map $\pi_\Sigma$ and $(\PP_{k+1})_P$ (resp., $(\PP_{k+1})_{\Sigma-P}$) its image under $\piP$.  Then \[\piP(i(a)) = \left[\piP \circ (p \times \mathbb{I})\right](a')\] must necessarily lie in a different connected component of $\PP_{k+1} - (\PP_{k+1})_{\Sigma - P}$ than \[\piP(i(b)) = \left[\piP \circ (p \times \mathbb{I})\right](b'),\] since the boundary of its connected component has curves of type $A$ only, and the boundary of the connected component containing $\piP(i(b))$ has curves of type $B$ only.

In particular, $\piP(i(a)) \neq \piP(i(b))$, and we conclude that $i: \cR \rightarrow \Sigma \times \PP_{k+1}$ is an imbedding, as desired.

\end{proof}

We are now ready to prove Theorem \ref{thm:SurfDecomp}.

\begin{proof}[Proof of Theorem \ref{thm:SurfDecomp}]
By Proposition \ref{prop:Adaptedsurf}, there exists a $(0,1)$ sutured multi-diagram, \[\left(\Sigma, \boldalpha, \boldbeta_{\{0,1\}^\ell}, P\right)\] for $L$ that is also a surface multi-diagram compatible with the collection \[\{S_{\cI} \subset Y(\cI)|\cI \in \{0,1\}^\ell\}.\]

By Proposition \ref{prop:ChainHomEquiv}, the multi-diagram, $\left(\Sigma', \boldalpha',\boldbeta'_{\{0,1\}^\ell}\right)$, obtained as in Definition \ref{defn:DecompSurf} is a $(0,1)$--sutured multi-diagram compatible with $L'$.  Furthermore, if $X^{(0,1)}$ $\left(\mbox{resp., }(X^{(0,1)})'\right)$ is the $(0,1)$--filtered chain complex corresponding to the sutured multi-diagram $\left(\Sigma, \boldalpha,\boldbeta_{\{0,1\}^\ell}\right)$ (resp., $\left(\Sigma', \boldalpha',\boldbeta'_{\{0,1\}^\ell}\right)$), then Proposition \ref{prop:ChainHomEquiv} tells us that \[\left(X^{(0,1)}\right)' \cong \bigoplus_{\spincs \in \outerspinc} (X;\spincs).\]

But \cite[Lem. 3.10]{MR2390347} tells us that, for each $\cI \in \{0,1\}^\ell$,  $\spincs \in O_{\SI}$  iff \[\langle c_1(\spincs,t),[\SI]\rangle = c(\SI,t).\] 


Hence, \[\left(X^{(0,1)}\right)' \cong \bigoplus_{\{\spincs_k(S)\,|\,k = \frac{1}{2}c(S,t)\}} \left(X^{(0,1)};\spincs_k(S)\right) \,\,\,= X_k^{(0,1)}.\]

Furthermore (see \cite[Sec. 7]{GT08082817}) any other choice of a $(0,1)$ sutured multi-diagram for $L$ (resp., $L'$) yields a filtered chain complex that is filtered quasi-isomorphic to $X^{(0,1)}$ (resp., to $\left(X^{(0,1)}\right)'$).  See Remark \ref{rmk:SpecSeqInv}).
\end{proof}


\section{Naturality of the Spectral Sequence} \label{sec:Naturality}
In this section, we discuss consequences of Theorem \ref{thm:SurfDecomp}.  We shall see that various natural geometric operations on balanced tangles can be understood in terms of surface decompositions on their sutured double branched covers, which will, in particular, imply that the algebra of the spectral sequence from Khovanov homology to sutured Floer homology, described in \cite{GT08071432} and \cite{AnnularLinks}, behaves ``as expected'' with respect to the geometric operations.

In what follows, recall that any admissible balanced tangle $T \subset D \times I$ (resp., link $\bL \subset A \times I$) can be represented by an {\em enhanced projection} (diagram) $\cP(T)$ (resp., $\cP(\bL)$).  See \cite[Sec. 5]{GT08071432} and \cite[Sec. 2]{AnnularLinks}.



\begin{theorem} \label{thm:AdjoinTrivial}(Trivial inclusion) Let $T \subset D \times I$ be an $n$--balanced tangle in the product sutured manifold $D \times I$, and let $T' \subset D \times I$ be the $(n+1)$--balanced tangle obtained from $T$ by adjoining a trivial strand separated from $T$ by a properly-imbedded $I$--invariant disk, $F$, as in Figure \ref{fig:AdjoinTrivial}.  Let $\cF(T), \cF(T')$ be the associated filtered complexes as in Definition \ref{defn:FiltCpxTangle}.  Then \[\cF(T) = \cF(T').\]
\end{theorem}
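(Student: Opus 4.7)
The plan is to realize the operation $T \mapsto T'$ at the level of sutured double branched covers as a surface decomposition, and then invoke Theorem~\ref{thm:SurfDecomp}. The separating disk $F \subset D \times I$ is disjoint from $T'$, so its preimage under the branched covering $\boldSigma(D\times I, T') \to D\times I$ is a pair of disjoint disks $\widetilde{F}_1, \widetilde{F}_2$. Choosing an enhanced projection of $T'$ that places no crossings on the trivial strand (possible since the trivial strand is separated from $T$ by $F$), the surgery link $L_{T'}$ coincides with $L_T$ and sits inside the preimage of the $T$-side of $F$, disjoint from both $\widetilde{F}_i$. Because $F$ is a vertical disk of the form $\gamma \times I$, each $\widetilde{F}_i$ meets each component of $R(\Gamma')$ in a single arc, so hypotheses (1) and (2) of Theorem~\ref{thm:SurfDecomp} are automatic.

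Applying Theorem~\ref{thm:SurfDecomp} successively along $\widetilde{F}_1$ and then along $\widetilde{F}_2$, I would identify the doubly-decomposed sutured manifold as a disjoint union
\[
\boldSigma(D\times I, T) \;\amalg\; \boldSigma(D\times I, \text{trivial strand}),
\]
with induced framed link $L_T$ in the first factor and empty link in the second. The double branched cover of $D\times I$ over a single properly imbedded arc is a product sutured manifold $D\times I$, with $SFH = \Z_2$. Since the filtered complex for a disjoint union of sutured manifolds is the tensor product of the individual filtered complexes, two applications of Theorem~\ref{thm:SurfDecomp} yield
\[
\cF(T) \;=\; \cF(T)\otimes \Z_2 \;\leq\; \cF(T').
\]

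To promote this inequality to the claimed equality, I would show that the ${\bf A}_{\widetilde{F}_1}$-splitting of Proposition~\ref{prop:SpecSeqSplit} applied to $\cF(T')$ is concentrated in a single Alexander grading, namely $k = \tfrac{1}{2}c(\widetilde{F}_1,t)$. Concretely, I would construct an admissible $(0,1)$ sutured multi-diagram for $L_{T'}$ that is simultaneously a surface multi-diagram adapted to $\widetilde{F}_1$ (in the sense of Definition~\ref{defn:SurfMultDiag}) by boundary-summing a $(0,1)$ multi-diagram for $L_T$ in $\boldSigma(D\times I, T)$ with a trivial Heegaard diagram (a disk with no $\boldalpha$- or $\boldbeta$-curves) for the product piece $\boldSigma(D\times I, \text{trivial})$, gluing along a subsurface $P \subset \Sigma$ realizing $\widetilde{F}_1$. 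Since no $\boldalpha$- or $\boldbeta$-curves enter $P$, every generator of this multi-diagram misses $P$, so by \cite[Lem.~5.4]{MR2390347} every generator lies in an outer Spin$^c$ structure with respect to $\widetilde{F}_1$. Thus $X^{(0,1)}_k = X^{(0,1)}$ for $k = \tfrac12 c(\widetilde{F}_1,t)$ and vanishes otherwise, and Proposition~\ref{prop:ChainHomEquiv} identifies $(X^{(0,1)})'$ with this unique nonzero summand. Running the same argument along $\widetilde{F}_2$ (or equivalently tensoring with the trivial factor $\Z_2$) identifies the result with $\cF(T)$, giving $\cF(T') = \cF(T)$.

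The main obstacle is the bookkeeping in the third paragraph: producing the multi-diagram for $L_{T'}$ that is simultaneously adapted to $\widetilde{F}_1$ and satisfies the admissibility condition of \cite[Sec.~3.2]{GT08071432}, and verifying that the boundary-sum construction genuinely realizes $\boldSigma(D\times I, T')$ with the correct sutures and branch-cover structure. Once this is in place, the splitting proposition and Theorem~\ref{thm:SurfDecomp} combine to deliver the equality on the nose.
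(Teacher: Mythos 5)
Your overall strategy matches the paper's: realize the trivial-inclusion operation as a surface decomposition of $\boldSigma(D\times I, T')$ along the preimage $\widetilde F = \widetilde F_1 \amalg \widetilde F_2$ of the separating disk $F$, observe that the decomposed manifold is $\boldSigma(D\times I,T) \amalg (D\times I)$ (so the decomposed filtered complex is $\cF(T)\otimes\Z_2 = \cF(T)$), and then promote the inequality $\cF(T)\leq\cF(T')$ coming from Theorem~\ref{thm:SurfDecomp} to an equality by showing that \emph{every} Spin$^c$ structure appearing in $X(L_{T'})$ is outer with respect to $\widetilde F$. Where you differ from the paper is in how that last claim is verified: the paper chooses a self-indexing Morse function on $\boldSigma(D\times I,T')$ whose gradient is everywhere tangent to $\widetilde F$, which forces every Spin$^c$ structure on every $Y(\cI)$ to admit a representing unit vector field tangent to $\widetilde F$ and hence nowhere equal to the negative unit normal, i.e.\ outer (Definition~\ref{defn:outer}); it also decomposes along the two-component $\widetilde F$ in a single pass rather than iterating.

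Your diagram-level substitute is morally the same idea (a gradient tangent to $\widetilde F$ forces the $\boldalpha$- and $\boldbeta$-curves, which are traced by ascending/descending disks of critical points lying outside the flow-invariant $\widetilde F$, to miss $P$), but the ``boundary-sum'' description needs tightening. The manifold $\boldSigma(D\times I, T')$ is recovered from the disjoint union $\boldSigma(D\times I,T) \amalg (D\times I)$ by regluing along \emph{both} $\widetilde F_1$ and $\widetilde F_2$; cutting along $\widetilde F_1$ alone does not disconnect it, so gluing two diagrams along a single $P$ realizing $\widetilde F_1$ would not produce a diagram for $\boldSigma(D\times I,T')$. You would either need a surface multi-diagram adapted to the disconnected $P = P_1\amalg P_2$ all at once, or you would need to verify outerness of all generators of the intermediate (once-decomposed) manifold with respect to $\widetilde F_2$ before the second application. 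The paper's Morse-function argument is precisely the device that dodges this bookkeeping, so it is worth adopting in place of the explicit boundary-sum construction you flag as the ``main obstacle.''
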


\begin{proof} 

Let $\widetilde{F}$ denote the preimage of $F$ in $\boldSigma(D \times I, T')$, which is a ($2$--component) vertical decomposing surface satisfying conditions (1) and (2) of the statement of Theorem \ref{thm:SurfDecomp}.  
  Then, as in the proof of Proposition \ref{prop:Adaptedsurf}, we can choose a Morse function on $\boldSigma(D \times I, T)$ whose gradient is everywhere tangent to $\widetilde{F}$.  In particular, (see, e.g., the constructions in \cite[Sec. 4]{MR2253454}, \cite[Sec. 2.6]{MR2113019}) any Spin$^c$ structure on $Y(\cI)$ for any $\cI \in \{0,1\}^\ell$ can be represented by a (homology class of) unit vector field that is everywhere tangent to $\widetilde{F}$.  This implies (see Definition \ref{defn:outer}) that every generator in $\cF(T')$ is outer with respect to $\widetilde{F}$.

Furthermore, decomposing along $\widetilde{F}$ produces the sutured manifold which is the disjoint union of $\boldSigma(D \times I, T)$ and a product sutured manifold.  Theorem \ref{thm:SurfDecomp} then implies \[\cF(T) = \cF(T'),\] as desired.

\end{proof}

\begin{theorem} (Stacking) \label{thm:Stack} Let $T_i \subset (D \times I)_i$, for $i=1,2$, be two $n$--balanced tangles,  and let $T_1 + T_2 \subset D \times I$ be any $n$--balanced tangle obtained by stacking a projection, $\cP(T_1)$, of $T_1$ on top of a projection, $\cP(T_2)$, of $T_2$ as in Figure \ref{fig:Stacking}.  Then \[\cF(T_1 + T_2) = \cF(T_1) \otimes \cF(T_2).\]
\end{theorem}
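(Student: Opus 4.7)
The proof proceeds in direct parallel with the proof of Theorem~\ref{thm:AdjoinTrivial}, replacing the vertical separating disk there with the horizontal disk $F := D\times\{1/2\}$ that separates the stacked projections $\cP(T_1)$ and $\cP(T_2)$ inside $D\times I$. Let $\widetilde{F}$ denote the preimage of $F$ in $\boldSigma(D\times I, T_1+T_2)$; this is the double cover of $F$ branched over the $n$ points where $T_1+T_2$ meets $F$, and is therefore a smooth decomposing surface. Its boundary lies on the suture annulus, so the conditions of Definition~\ref{defn:DecSurf} are satisfied and condition (2) of Theorem~\ref{thm:SurfDecomp} is vacuous (since $\widetilde{F}\cap R(\Gamma)=\emptyset$). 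Moreover, all crossings of $\cP(T_1)+\cP(T_2)$ lie strictly above or below height $1/2$, so the surgery link $L_{T_1+T_2}$ is disjoint from $\widetilde{F}$. If necessary, I would first modify $\widetilde{F}$ by an isotopy supported near $\partial Y$ and disjoint from $L_{T_1+T_2}$, as in \cite[Lem.~4.5]{MR2390347}, to render it good.

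Decomposing $\boldSigma(D\times I, T_1+T_2)$ along $\widetilde{F}$ yields the disjoint union of sutured manifolds $\boldSigma((D\times I)_1, T_1) \sqcup \boldSigma((D\times I)_2, T_2)$, with induced image of $L_{T_1+T_2}$ being the framed link $L_{T_1} \sqcup L_{T_2}$. A sutured Heegaard multi-diagram for a disjoint union of sutured manifolds is itself the disjoint union of multi-diagrams for each summand, and counts of holomorphic polygons in such a diagram factor as products of counts in the constituent diagrams; it follows that the associated filtered chain complex is precisely the tensor product $\cF(T_1)\otimes\cF(T_2)$. By Theorem~\ref{thm:SurfDecomp}, this tensor product is therefore filtered quasi-isomorphic to the direct summand $X^{(0,1)}_k \subseteq \cF(T_1+T_2)$ for $k=\frac{1}{2}c(\widetilde{F},t)$.

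Following the strategy of Theorem~\ref{thm:AdjoinTrivial}, it remains to show that every Spin$^c$ structure on $\boldSigma(D\times I, T_1+T_2)$ is $\widetilde{F}$-outer. My plan is to construct a Morse function on $\boldSigma(D\times I, T_1+T_2)$ whose gradient is tangent to $\widetilde{F}$ at every point of $\widetilde{F}$: near $\widetilde{F}$ one can take the downstairs function on $D\times I$ to be independent of the $I$-coordinate (so its gradient is horizontal, hence tangent to the horizontal disk $F$), and then extend it across $\boldSigma(D\times I, T_1+T_2)$ so as to satisfy the required boundary conditions on $\partial Y$. The corresponding gradient vector field then represents every Spin$^c$ structure by a unit vector field tangent to $\widetilde{F}$ on $\widetilde{F}$, hence never equal to the negative unit normal, so every Spin$^c$ structure is outer (Definition~\ref{defn:outer}). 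Consequently $X^{(0,1)}_k = X^{(0,1)} = \cF(T_1+T_2)$, yielding $\cF(T_1+T_2)=\cF(T_1)\otimes\cF(T_2)$.

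The main obstacle will be constructing this tangent-gradient Morse function adapted to the horizontal $\widetilde{F}$. Unlike in Theorem~\ref{thm:AdjoinTrivial}, where $F$ is $I$-invariant and the lifted height function on $D\times I$ already has gradient tangent to $\widetilde{F}$, here $F$ is transverse to the $I$-direction, forcing a Morse function that becomes locally $I$-independent near $\widetilde{F}$. One must then verify that the modified function lifts to a smooth, non-degenerate Morse function on $\boldSigma(D\times I, T_1+T_2)$ with the desired critical-point structure near the $n$ branch points on $\widetilde{F}$; the transverse intersection of the branch locus with $F$ makes the local model explicit, so this should be routine but requires some care.
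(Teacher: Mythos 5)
Your overall strategy matches the paper's proof: identify the horizontal disk $F$ separating the two stacked projections, pass to its preimage $\widetilde{F}$ in the double-branched cover, apply Theorem~\ref{thm:SurfDecomp} (after making $\widetilde{F}$ good via finger moves), observe that decomposition yields the disjoint union $\boldSigma((D\times I)_1, T_1) \sqcup \boldSigma((D\times I)_2, T_2)$ whose filtered complex is $\cF(T_1)\otimes\cF(T_2)$, and conclude by showing every Spin$^c$ structure is $\widetilde{F}$-outer so that the direct summand picked out by the decomposition is everything. That skeleton is exactly right.

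However, your proposed Morse function \emph{cannot} have gradient tangent to $\widetilde{F}$ everywhere, and you have misidentified where the difficulty lies. The real obstruction is not the branch points but the boundary conditions on a sutured Morse function. If $\nabla f$ were tangent to $\widetilde{F}$ at every point of $\widetilde{F}$, then $\widetilde{F}$ would be invariant under the gradient flow, hence swept out by flow lines emanating from $\widetilde{F}\cap R_-(\Gamma)$. But after the finger moves of \cite[Lem.~4.5]{MR2390347}, $\widetilde{F}\cap R_{\pm}(\Gamma)$ consists only of thin arcs near $\partial\widetilde{F}$, and the flow out of those arcs cannot sweep out the large horizontal interior of $\widetilde{F}$ (which, unlike the $I$-invariant disk in Theorem~\ref{thm:AdjoinTrivial}, is transverse to the vertical flow direction and has genus $\lfloor (n-1)/2\rfloor$). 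Making the Morse function ``locally $I$-independent near $\widetilde{F}$'' would violate the required boundary behavior of $v_0$ near $\partial\widetilde{F}$. The correct construction --- and what the paper uses --- is Juh\'asz's adapted Morse function from Proposition~\ref{prop:Adaptedsurf}: its gradient is the \emph{positive} unit normal to $\widetilde{F}$ along the quasi-polygon $P\subset\widetilde{F}$ sitting in the Heegaard surface, and tangent to $\widetilde{F}$ only along the vertical ``finger'' regions $\widetilde{F}-\overline{P}$. This vector field is still nowhere equal to the \emph{negative} unit normal, so Definition~\ref{defn:outer} is satisfied and every Spin$^c$ structure is outer. Replacing your tangency claim with this weaker (but achievable) condition closes the gap, and the rest of your argument then goes through as the paper does.
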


\begin{proof} Let $F$ denote the disk along which the two product sutured manifolds $(D \times I)_1$ and $(D \times I)_2$ are glued and $\widetilde{F}$ its preimage in $\boldSigma(D \times I, T_1 + T_2)$.  Applying finger moves as in the proof of \cite[Lem. 4.5]{MR2390347}, we move $\widetilde{F}$ to an equivalent surface satisfying conditions (1) and (2) of the statement of Theorem \ref{thm:SurfDecomp}.

As in the proof of Proposition \ref{prop:Adaptedsurf}, we choose a Morse function on $\boldSigma(D \times I, T_1 + T_2)$ whose restriction to $\widetilde{F}$ points in either the positive normal direction to $\widetilde{F}$ (along the quasi-polygon $P \subset \widetilde{F}$) or is tangent to $\widetilde{F}$ (along the vertical regions, $\widetilde{F} - \overline{P}$).  In particular, as in the proof of Theorem \ref{thm:AdjoinTrivial}, any Spin$^c$ structure on $Y(\cI)$ can be represented by a unit vector field agreeing with the above vector field on $\widetilde{F}$, hence every generator of $\cF(T_1 + T_2)$ is outer with respect to $\widetilde{F}$.

By Theorem \ref{thm:SurfDecomp}, we then obtain \[\cF(T_1 + T_2) = \cF(T_1) \otimes \cF(T_2),\] as desired.
\end{proof}

\begin{theorem} (Offset stacking) \label{thm:OffsetStack} Let $T_i$ be an $n_i$--balanced tangle in $(D \times I)_i$ for $i=1,2$.  For $k \in \Z$, let $T_1 +_k T_2$ be any $n$--balanced tangle obtained by $k$--offset stacking any projection $\cP(T_1)$ of $T_1$ atop any projection, $\cP(T_2)$, of $T_2$, as in Figure \ref{fig:OffsetStack}.  More precisely, one forms $T_1 +_k T_2$ from projections, $\cP(T_1)$ and $\cP(T_2)$, by
\begin{itemize}
  \item stacking $\cP(T_1)$ atop $\cP(T_2)$ so that the leftmost strand of $\cP(T_2)$ is $|k|$ strands to the right (resp., to the left) of the left-most strand of $\cP(T_1)$ when $k \geq 0$ (resp., $k \leq 0$),
  \item adjoining trivial strands to both $\cP(T_1)$ and $\cP(T_2)$ to ensure that both are $n$--balanced, for minimal $n$.
\end{itemize}

Then \[\cF(T_1 +_k T_2) = \cF(T_1) \otimes \cF(T_2).\]
\end{theorem}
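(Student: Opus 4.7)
The plan is to reduce offset stacking to ordinary stacking by adjoining trivial strands to each tangle separately, and then invoke Theorems \ref{thm:AdjoinTrivial} and \ref{thm:Stack}. The essential observation is that the very recipe used to build $T_1 +_k T_2$ in the statement of the theorem already factors as ``first adjoin trivial strands, then ordinarily stack.''

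More precisely, I would argue as follows. Assume $k \geq 0$; the case $k < 0$ is entirely symmetric, with the roles of left and right interchanged. Let $n := \max(n_1,\, k+n_2)$, so that after shifting $\cP(T_2)$ to the right by $k$, the union of the strands of $\cP(T_1)$ and $\cP(T_2)$ occupies positions in $\{1,\ldots,n\}$. Define an $n$-balanced tangle $T_1' \subset D\times I$ by adjoining to $T_1$ the $n-n_1$ trivial strands in positions $\{n_1+1,\ldots,n\}$, and an $n$-balanced tangle $T_2' \subset D\times I$ by adjoining to $T_2$ the trivial strands in positions $\{1,\ldots,k\} \cup \{k+n_2+1,\ldots,n\}$. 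Each adjoined trivial strand is separated from the remainder of $T_i$ by a properly-imbedded $I$-invariant disk, so Theorem \ref{thm:AdjoinTrivial}, applied once per adjoined strand, gives
\[
\cF(T_1') = \cF(T_1) \qquad \text{and} \qquad \cF(T_2') = \cF(T_2).
\]

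By construction the projections $\cP(T_1')$ and $\cP(T_2')$ have matching endpoints in the horizontal positions $\{1,\ldots,n\}$, and stacking them in the ordinary sense produces precisely the diagram defining $T_1 +_k T_2$. Hence $T_1 +_k T_2 = T_1' + T_2'$ as $n$-balanced tangles in $D \times I$, and Theorem \ref{thm:Stack} yields
\[
\cF(T_1 +_k T_2) \;=\; \cF(T_1' + T_2') \;=\; \cF(T_1') \otimes \cF(T_2') \;=\; \cF(T_1) \otimes \cF(T_2),
\]
as desired. There is no genuine obstacle here beyond carefully book-keeping the alignment of the strand positions and verifying that each adjoined trivial strand is indeed separated by a vertical disk from the rest of the corresponding augmented tangle, so that Theorem \ref{thm:AdjoinTrivial} applies; once this is checked, the result is immediate from the two previously established naturality statements.
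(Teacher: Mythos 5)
Your proof is correct and follows exactly the route the paper has in mind: the published proof consists of the single sentence ``This is an immediate corollary of Theorems~\ref{thm:AdjoinTrivial} and~\ref{thm:Stack}.'' Your argument simply spells out the implicit bookkeeping---augmenting each $T_i$ to an $n$-balanced tangle $T_i'$ by iterated applications of Theorem~\ref{thm:AdjoinTrivial}, observing that $T_1 +_k T_2 = T_1' + T_2'$ as ordinary stackings, and then applying Theorem~\ref{thm:Stack}---which is precisely the intended ``immediate corollary.''
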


\begin{remark} If $T_i$ is an $n_i$--balanced tangle for $i = 1, 2$, then $T_1 +_k T_2$ is an $n$--balanced tangle, with
\[n= \left\{\begin{array}{ll}
      \max(n_1, n_2 + k) & \mbox{when } k \geq 0\\
      \max(n_1 + |k|,n_2) & \mbox{when } k < 0.
            \end{array}\right.\]

With notation as above, note that $m=(n_1+n_2)-n$ is the number of overlapping strands (of the nontrivial parts) of $T_1+_k T_2$.  We will often refer to $m = (n_1+n_2) - n$ as the {\em overlap} of $T_1, T_2$ in $T_1 +_k T_2$.
\end{remark}

\begin{figure}
\begin{center}
\input{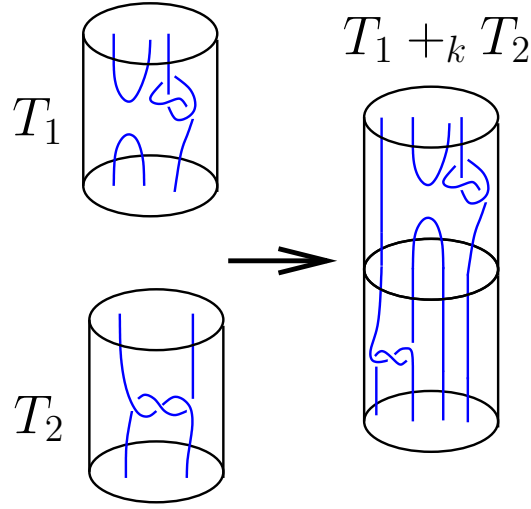}
\end{center}
\caption{Offset stacking two balanced tangles $T_1$ and $T_2$ to obtain a new balanced tangle, $T_1 +_k T_2$.  In the above, $\cP(T_1)$ is stacked atop $\cP(T_2)$ with offset $k=-1$ and overlap $m=1$.}
\label{fig:OffsetStack}
\end{figure}

\begin{proof}[Proof of Theorem \ref{thm:OffsetStack}]
This is an immediate corollary of Theorems \ref{thm:AdjoinTrivial} and \ref{thm:Stack}.
\end{proof}

There is also a natural geometric relationship between links in $A \times I$ and balanced tangles in $D \times I$, along with a corresponding naturality result for the associated spectral sequences.  To understand this relationship, recall (see \cite{GT07060741}, \cite{GT08071432}) that $A \times I$ can be identified as the sutured complement of a standard unknot, $B \subset S^3$, via the identification: 
\begin{eqnarray*}
A \times I &=& \{(r,\theta, z) \,|\, r \in [1,2], \theta \in [0,2\pi), z \in [0,1]\} \subset \R^3 \cup \infty = S^3,\\
B &=& \{(r,\theta, z)\,|\,r=0\} \cup \infty \subset S^3
\end{eqnarray*}




\begin{definition} Let $a \in [0,2\pi)$.  Then \[\gamma_{a} := \{(r,\theta) \in A\,|\, \theta = a\}\] denotes the properly-imbedded arc with argument $a$, oriented outward, and \[D_{a} := \gamma_{a} \times I\] denotes the corresponding $I$--invariant disk, endowed with the product orientation.
\end{definition}

\begin{definition}  (Cutting) Let $\bL \subset A \times I$ be a link in the product sutured manifold $(A \times I, \partial A \times I)$, $\cP(\bL)$ a projection of $\bL$, and $D_a$ an $I$--invariant disk.  Then we denote by $\Psi_{a}(\cP(\bL))$ the balanced tangle projection obtained by decomposing $A \times I$ along $D_a$.
\end{definition}

\begin{definition} (Gluing) Let $T \subset D \times I$ be a balanced tangle in the product sutured manifold $(D \times I, \partial D \times I)$ and $\cP(T)$ a projection of $T$.  Then we denote by $\Psi_{a}^{-1}(\cP(T))$ the annular link projection obtained by identifying $D_+$ and $D_-$ at $\theta=a$, as in Figure \ref{fig:CutGlue}.  $\Psi_{a}^{-1}(\cP(T))$ is the braid closure of $\cP(T)$, considered as a link in $A \times I$.
\end{definition}

\begin{figure}
\begin{center}
\input{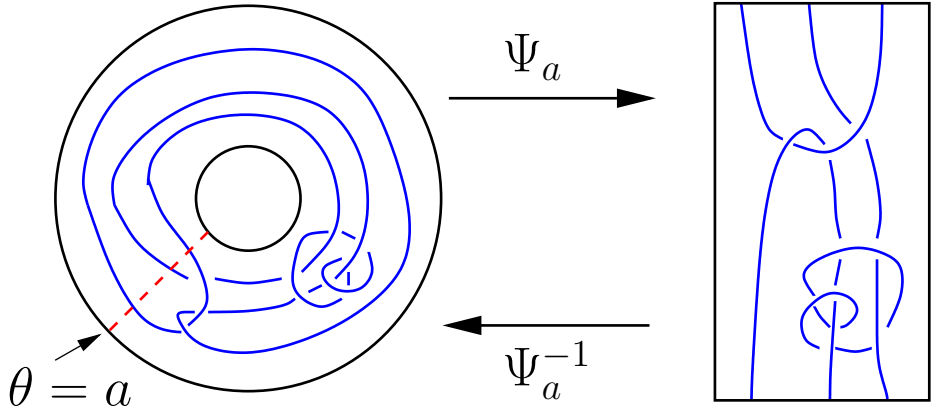}
\end{center}
\caption{(A projection of) an annular link, along with (a projection of) a balanced tangle obtained by cutting $A \times I$ along a vertical disk, $D_a = \gamma_a \times I$, for some $a \in [0,2\pi)$.}
\label{fig:CutGlue}
\end{figure}

The following result was proved in \cite{GT08071432}.  We recall it here, since it provides another example of the naturality of the spectral sequence relating Khovanov homology and Heegaard Floer homology.

\begin{theorem} (Cutting) \label{thm:Cutting} \cite[Thm. 3.1]{AnnularLinks} Let $\bL \subset A \times I$ be a link, and let $T \subset D \times I$ be any balanced tangle admitting a projection, $\cP(T)$, such that $\Psi_a^{-1}(\cP(T))$ is a projection of $\bL$.  Then $\cF(T)$ is a direct summand of $\cF(\bL)$.  If there exists $a' \in [0,2\pi)$ such that \[|D_{a'} \pitchfork \bL| < |D_{a} \pitchfork \bL|,\] then $\cF(T) = 0$, i.e., the trivial direct summand of $\cF(\bL)$.
\end{theorem}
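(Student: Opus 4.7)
The plan is to realize the cutting operation as a surface decomposition in the double-branched cover and then invoke Theorem~\ref{thm:SurfDecomp}. First I would fix a projection $\cP(T)$ of $T$ satisfying $\Psi_a^{-1}(\cP(T))=\cP(\bL)$, so that every crossing of $\cP(\bL)$ lies in $A\setminus\gamma_a$. This ensures that the surgery link $L_\bL\subset\boldSigma(A\times I,\bL)$, which is built from arcs located at the crossings of $\cP(\bL)$, is disjoint from $\widetilde{D}_a$, the preimage in the branched cover of the vertical disk $D_a=\gamma_a\times I$.

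Next, I would verify that $\widetilde{D}_a$ is a decomposing surface satisfying conditions (1) and (2) of Theorem~\ref{thm:SurfDecomp}: the disjointness $\widetilde{D}_a\cap L_\bL=\emptyset$ is immediate from the previous step, and since $\widetilde{D}_a$ is vertical, each component of $\widetilde{D}_a\cap R(\Gamma)$ is an arc running from top to bottom, so there are no closed components of intersection to worry about. I would then identify the sutured manifold obtained by decomposing $\boldSigma(A\times I,\bL)$ along $\widetilde{D}_a$ with $\boldSigma(D\times I,T)$, with the induced image of $L_\bL$ being precisely $L_T$. Theorem~\ref{thm:SurfDecomp} then yields $\cF(T)\leq\cF(\bL)$, realizing $\cF(T)$ as the direct summand of $\cF(\bL)$ sitting in Alexander grading $\tfrac{1}{2}c(\widetilde{D}_a,t)$.

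For the vanishing claim, suppose $|D_{a'}\pitchfork\bL|<|D_a\pitchfork\bL|$ for some $a'$. After isotoping $\cP(\bL)$ so that $\gamma_{a'}$ is transverse to it away from crossings (invoking the Reidemeister invariance of $\cF(\bL)$ recorded in Remark~\ref{rmk:SpecSeqInv}), the preimage $\widetilde{D}_{a'}$ also satisfies the hypotheses of Theorem~\ref{thm:SurfDecomp} and is homologous to $\widetilde{D}_a$ rel boundary in the branched cover (tubing along components of $\bL$ that run between $\gamma_a$ and $\gamma_{a'}$ in $A\times I$ as needed). Using $\chi(\widetilde{D}_\ast)=2-|D_\ast\pitchfork\bL|$ and the fact that the boundary quantities $I(\widetilde{D}_\ast)$ and $r(\widetilde{D}_\ast,t)$ agree for the two surfaces, I would compute $c(\widetilde{D}_{a'},t)<c(\widetilde{D}_a,t)$. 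Since homologous decomposing surfaces induce the same Alexander grading, and since the extremality characterization of outer spin$^c$ structures in \cite[Lem.~3.10]{MR2390347} forces $\langle c_1(\spincs,t),[S]\rangle\leq c(S,t)$ for every generator-supporting $\spincs$ and every decomposing surface $S$ in the given homology class, no generator of $\cF(\bL)$ can sit at the Alexander grading $\tfrac{1}{2}c(\widetilde{D}_a,t)>\tfrac{1}{2}c(\widetilde{D}_{a'},t)$; hence $\cF(T)=0$.

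The hard part will be the vanishing step, which requires justifying both (a) that $\widetilde{D}_a$ and $\widetilde{D}_{a'}$ are homologous rel boundary in $\boldSigma(A\times I,\bL)$, keeping careful track of how the cover tubes around the branch locus between the two cuts, and (b) the extremality bound $\langle c_1(\spincs,t),[S]\rangle\leq c(S,t)$ for every generator-supporting spin$^c$ structure. I expect (b) to follow from the vector-field interpretation of outer spin$^c$ structures---a generator's spin$^c$ structure admits a representative vector field that can be homotoped to agree with the positive normal along $S$ whenever $\spincs$ is outer---but this must be verified carefully in the sutured setting before the rest of the argument goes through.
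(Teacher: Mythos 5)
The paper does not prove this theorem here: it is cited from \cite[Thm.\ 3.1]{AnnularLinks}, and the only guidance given is the remark immediately afterward, which confirms exactly your overall strategy — apply Theorem~\ref{thm:SurfDecomp} to the branched double cover $\widetilde{D}_a = \boldSigma(D_a, D_a\pitchfork\bL)$, note that $\chi(\widetilde{D}_a) = 2 - |D_a\pitchfork\bL|$ by Riemann--Hurwitz, and read off the vanishing as Thurston-norm detection. Your treatment of the first part (disjointness of $L_\bL$ from $\widetilde{D}_a$ after choosing a projection with crossings away from $\gamma_a$, verification of the hypotheses of Theorem~\ref{thm:SurfDecomp}, identification of the decomposed manifold with $\boldSigma(D\times I, T)$) is exactly the intended argument.

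There is, however, a concrete sign error in the vanishing step. You assert
$c(\widetilde{D}_{a'},t) < c(\widetilde{D}_a,t)$,
but the Euler characteristic formula gives the opposite inequality. Since
\[
c(S,t) = \chi(S) - \tfrac{I(S)}{2} - r(S,t), \qquad \chi(\widetilde{D}_\ast) = 2 - |D_\ast\pitchfork\bL|,
\]
and you (correctly) argue that $I(\widetilde{D}_\ast)$ and $r(\widetilde{D}_\ast,t)$ agree for the two surfaces, the hypothesis $|D_{a'}\pitchfork\bL| < |D_a\pitchfork\bL|$ forces $\chi(\widetilde{D}_{a'}) > \chi(\widetilde{D}_a)$ and hence $c(\widetilde{D}_{a'},t) > c(\widetilde{D}_a,t)$. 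With the extremality direction you quote, namely $\langle c_1(\spincs,t),[S]\rangle \le c(S,t)$ for every generator-supporting $\spincs$, the bound coming from $\widetilde{D}_{a'}$ is then \emph{weaker}, not stronger, than the bound from $\widetilde{D}_a$, so the inequality chain does not rule out generators at Alexander grading $\tfrac{1}{2}c(\widetilde{D}_a,t)$. For your argument to close, the two inequalities must both be corrected simultaneously: $c(\widetilde{D}_{a'},t) > c(\widetilde{D}_a,t)$ together with the extremality bound in the form $\langle c_1(\spincs,t),[S]\rangle \ge c(S,t)$ (so that outer $\spincs$ sit at the \emph{minimal} Alexander grading) would yield $\langle c_1(\spincs,t),[S]\rangle \ge c(\widetilde{D}_{a'},t) > c(\widetilde{D}_a,t)$ for every generator, which is what you want. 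You should also note that \cite[Lem.\ 3.10]{MR2390347} is only the characterization $\spincs \in O_S \Leftrightarrow \langle c_1(\spincs,t),[S]\rangle = c(S,t)$, not a one-sided bound; the inequality for all generator-supporting $\spincs$ requires the adjunction-type argument from Juh\'asz's Lemma~5.4 and the surrounding discussion, which you flag in your last paragraph as the point needing verification — and indeed, the orientation conventions there are exactly where the direction of the inequality gets pinned down.

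Finally, while you correctly identify (a) as a technical point, the homology class $[\widetilde{D}_a]$ in $H_2(\boldSigma(A\times I,\bL), \partial)$ is actually determined up to sign by the fact that it is Poincar\'e dual to the generator of $H^1$ coming from the $S^1$-valued Morse function, so $[\widetilde{D}_a] = [\widetilde{D}_{a'}]$ is cleaner than you might fear; the real work is (b).
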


\begin{remark} Note that Theorem \ref{thm:Cutting} is proved by applying Theorem \ref{thm:SurfDecomp} to \[\widetilde{D}_a = \boldSigma(D_a, D_a \pitchfork \bL),\] a ($\Ztwo$--equivariant) Seifert surface for $\widetilde{B} \subset \boldSigma(A \times I, \bL)$, the preimage of $B$ in $\boldSigma(A \times I, \bL)$.  Furthermore, the Euler characteristic of $\widetilde{D}_a$ is given by: \[\chi(\widetilde{D}_a) = 2- |D_a \pitchfork \bL|,\] by the Riemann-Hurwitz formula.  In particular, the vanishing/non-vanishing of $\cF(T)$ detects the Thurston norm of the cohomology class Poincare dual to $[\widetilde{D}_a]$.  
\end{remark}

\subsection{Offset stacking, generalized Murasugi sum, and annular link composition}

Link projections in $A \times I$ can be composed.  This composition is closely related to both the offset stacking operation and a generalization of the Murasugi sum operation.

\begin{figure}
\begin{center}
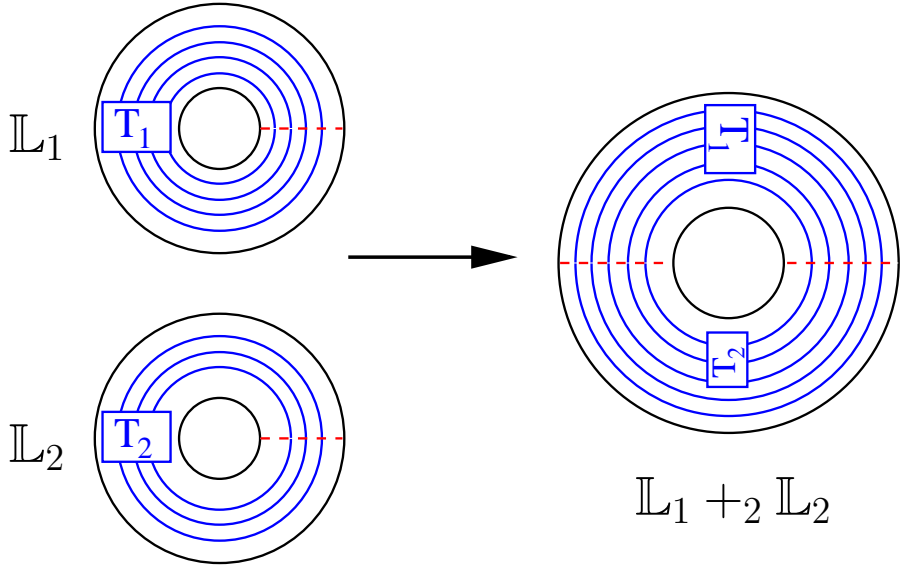
\end{center}
\caption{The annular link projection on the right is obtained by cutting open projections of $\bL_1$ and $\bL_2$, offset stacking the resulting balanced tangles, and regluing the result.  In the example above, the offset for the stacking operation is $2$ and the overlap is $2$.}
\label{fig:Composition}
\end{figure}

\begin{figure}
\begin{center}
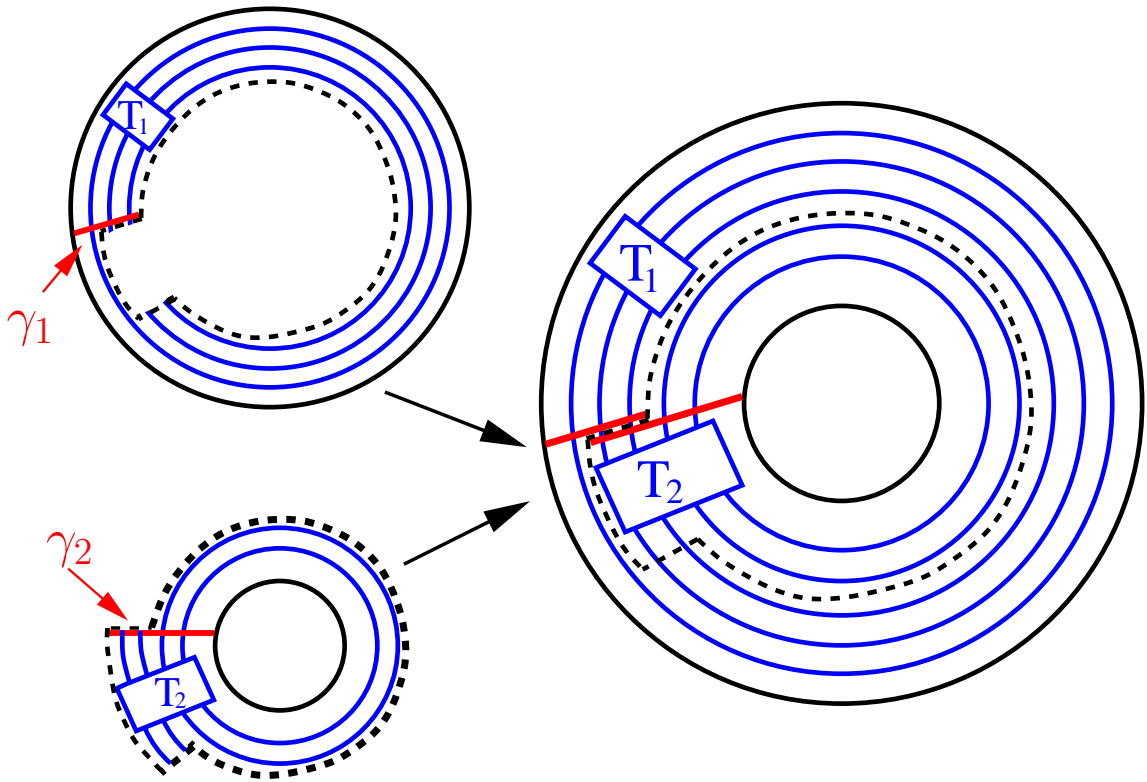
\end{center}
\caption{To obtain the figure on the right, we begin with two annular link projections, remove a neighborhood of a trivial product region in each, and identify the result.  In the double-branched cover, this corresponds to performing a generalized Murasugi sum along a subsurface of the double-branched cover of $\gamma_i \times I$ for $i=1,2$.}  
\label{fig:MurasugiSum}
\end{figure}

\begin{figure}
\begin{center}
\input{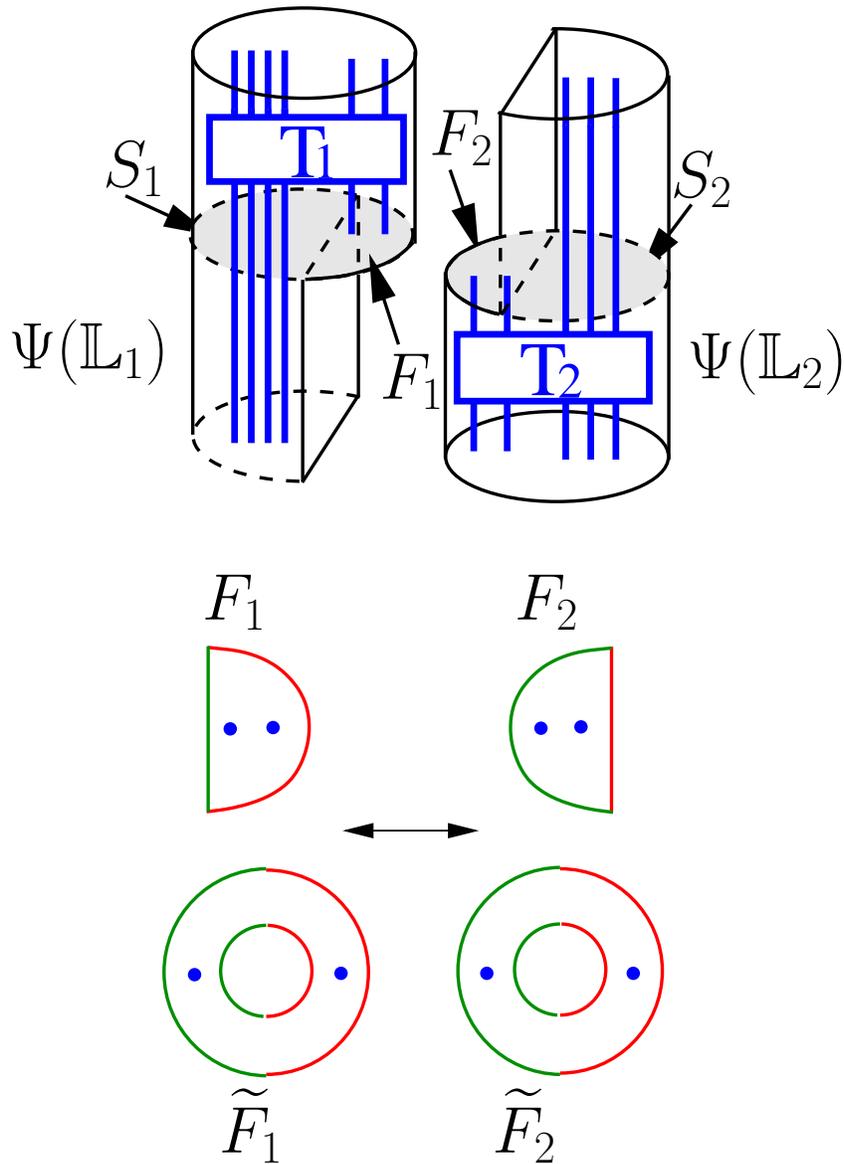}
\end{center}
\caption{Annular link composition corresponds to a generalized Murasugi sum in the double-branched cover.  In the top figure, we see two decomposed annular links being prepared to offset-stack with overlap $2$.  In the double-branched cover, fiber surfaces $\widetilde{S}_i$ (the preimages of the shaded $S_i$ for $i=1,2$) of the circle-valued Morse functions are identified along the subsurfaces $\widetilde{F}_i$.  Note that the green edges of $\partial \widetilde{F}_1$ are in the interior and the red edges are on the boundary of $\widetilde{S}_1$, whereas the green edges of $\partial \widetilde{F}_2$ are on the boundary and the red edges are in the interior of $\widetilde{S}_2$.  $\widetilde{F}_1$ and $\widetilde{F}_2$ are identified as illustrated in the bottom figure.}
\label{fig:MurasugiStack}
\end{figure}

\begin{definition} (Annular link composition) For $i=1,2$, let $\bL_i \subset (A \times I)_i$ be a link, $\cP(\bL_i)$ a projection, and $k \in \Z$.

Then the {\em $k$--offset composition}, denoted $\cP(\bL_1) +_k \cP(\bL_2)$, is defined as: 

\[\cP(\bL_1) +_k \cP(\bL_2) := \Psi_a^{-1}(\Psi_a(\cP(\bL_1))+_k \Psi_a(\cP(\bL_1))).\]

See Figure \ref{fig:Composition}.
\end{definition}

\begin{definition} (Generalized Murasugi sum)
For $i=1,2$, let $L_i$ be a nullhomologous link in the three-manifold $Y_i$ and $S_i$ a choice of Seifert surface for $L_i$.  For $i=1,2$, let $F_i \subset S_i$ be a subsurface satisfying:
\begin{itemize}
  \item Each boundary component of $F_i$ is a cyclic graph whose edges can be labeled either $1$ or $2$ such that no two adjacent edges have the same label.  In particular, there are an even number of vertices and edges.
  \item The type $1$ edges (resp., type $2$ edges) are in the boundary (resp., in the interior) of $S_1$, and the type $2$ edges (resp., type $1$ edges) are in the boundary (resp., in the interior) of $S_2$.
  \item There exists an orientation-reversing homeomorphism, $\phi: F_1 \longrightarrow F_2$, preserving the labelings of the boundary graphs.
\end{itemize}

Then one forms a new $3$--manifold, $Y_1 \#_F Y_2$ by identifying $Y_1 - (F_1 \times I)$ with $Y_2 - (F_2 \times I)$ along their common boundary, as follows.  

\begin{enumerate}
\item Note that, for $i=1,2$, \[\partial (Y_i - F_i \times I) = (F_i\times \{0\}) \cup (\partial F_i \times I) \cup (F_i \times \{1\}).\]
\item Identify $F_1 \times \{0\}$ with $F_2 \times \{0\}$ and $F_1 \times \{1\}$ with $F_2 \times \{1\}$ using $\phi$.
\item Identify $\partial F_1 \times I$ with $\partial F_2 \times I$ using the canonical (orientation-reversing) level-preserving homeomorphism.
\end{enumerate}

The {\em generalized Murasugi sum}, $S_1 *_F S_2$, is the imbedded surface in $Y_1 \#_F Y_2$ obtained by identifying $S_1 \times \{0\}$ and $S_2 \times \{0\}$ along $\phi:(F_1 \times \{0\}) \longrightarrow (F_2 \times \{0\})$.  Note that the boundary of $S_1 *_F S_2$ is a link, \[L_1 *_F L_2 = \bigcup_{i=1,2}\overline{\partial S_i - (\partial S_i \cap F_i)},\] which we also call the generalized Murasugi sum of $L_1$ and $L_2$.
\end{definition}

Note that when $F_1 \cong F_2$ is a disk, $Y_1 \#_F Y_2$ is the connected sum operation, and $S_1 *_F S_2$ is the standard Murasugi sum.




\begin{remark} Let $\bL_1 +_k \bL_2$ be any link obtained from $\bL_1, \bL_2$ by a $k$--offset composition.  Then $\boldSigma(A \times I, \bL_1 +_k \bL_2)$ is a generalized Murasugi sum of $\boldSigma(A \times I, \bL_1)$ and $\boldSigma(A \times I, \bL_2)$.  

More specifically, projection to the $\theta$ coordinate endows $A \times I$ with an $S^1$--valued Morse function with fibers $D_\theta$.  For a generic isotopy class representative of $\bL \subset A \times I$, this $S^1$--valued Morse function can be lifted to one for $\boldSigma(A \times I, \bL)$ whose fibers are $\boldSigma(D_\theta,D_\theta \cap \bL)$.  

If $\bL$ is obtained as a $k$--offset composition of $\bL_1$ and $\bL_2$ with overlap $m$, then $\boldSigma(A \times I, \bL)$ is obtained from $\boldSigma(A \times I, \bL_i)$ for $i=1,2$ by a generalized Murasugi sum along a subsurface of $\boldSigma(D_\theta,D_\theta \cap \bL)$ of genus $g$ with $b$ boundary components, where \[g = \left\lfloor\frac{m-1}{2}\right\rfloor, \mbox{ and}\]
\[b = \left\{\begin{array}{ll} 
             1 & \mbox{if $m$ is odd, and}\\
             0 & \mbox{if $m$ is even.}
             \end{array}\right.\]

See Figures \ref{fig:MurasugiSum} and \ref{fig:MurasugiStack}.
\end{remark}

\bibliography{SSSurfDecomp}
\end{document}